 \newtheorem{DE}{Definition}[section]
\newcommand {\sm} {\setminus}
 \newcommand{\qed}{\relax\ifmmode\hskip2em\Box\else\unskip\nobreak\hfill$\Box$\fi}
\newtheorem{defeng}[DE]{Definition}
\newtheorem{theorem}[DE]{Theorem}
\newtheorem{lemma}[DE]{Lemma}
\newtheorem{corollary}[DE]{Corollary}
\newtheorem{remark}[DE]{Remark}
{\theoremstyle{break}\theorembodyfont{\rmfamily}}
{\theoremstyle{break}\theorembodyfont{\rmfamily}}
\newcounter{claim}
\newenvironment{proof}[1][]%
	{\noindent {\setcounter{claim}{0}\sc proof --- }{#1}{}}{\qed\vspace{2ex}}
	{\refstepcounter{claim}\vspace{1ex}\noindent {(\it\arabic{claim}) {#1}{}}\it}{\vspace{1ex}}
	{\noindent {}{#1}{}}{ This proves~(\arabic{claim}).\vspace{1ex}}
\begin{document}

 \title{The (theta, wheel)-free graphs\\ Part IV: induced paths and cycles}
\author{Marko Radovanovi\'c\thanks{University
    of Belgrade, Faculty of Mathematics, Belgrade, Serbia. Partially
    supported by Serbian Ministry of Education, Science and
    Technological Development project 174033. E-mail:
    markor@matf.bg.ac.rs}~, Nicolas Trotignon\thanks{Univ Lyon, EnsL, UCBL, CNRS, LIP,
    F-69342, LYON Cedex 07, France. Partially
    supported by the LABEX MILYON ANR-10-LABX-0070. E-mail: nicolas.trotignon@ens-lyon.fr}~, Kristina Vu\v
  skovi\'c\thanks{School of Computing, University of Leeds, Leeds, UK and
    Faculty of Computer Science (RAF), Union University, Belgrade,
    Serbia.  Partially supported by EPSRC grant EP/N019660/1, and
    Serbian Ministry of Education, Science and
    Technological Development projects 174033 and
    III44006. E-mail: k.vuskovic@leeds.ac.uk}}

\maketitle

\begin{abstract}
  A hole in a graph is a chordless cycle of length at least~4. A theta
  is a graph formed by three internally vertex-disjoint paths of
  length at least~2 between the same pair of distinct vertices. A
  wheel is a graph formed by a hole and a node that has at least 3
  neighbors in the hole.  In this series of papers we study the class
  of graphs that do not contain as an induced subgraph a theta nor a
  wheel. In Part II of the series we prove a decomposition theorem for
  this class, that uses clique cutsets and 2-joins. In this paper we
  use this decomposition theorem to solve several problems related to
  finding induced paths and cycles in our class.
 \end{abstract}

 \section{Introduction}

 In this paper all graphs are finite and simple.  We say that a
 graph $G$ \emph{contains} a graph $H$ if $H$ is isomorphic to an
 induced subgraph of $G$, and that $G$ is \emph{$H$-free} if it does
 not contain $H$.  For a family of graphs ${\mathcal H}$, $G$ is
 \emph{${\mathcal H}$-free} if for every $H\in {\mathcal H}$, $G$ is $H$-free.

 A \emph{hole} in a graph is a chordless cycle of length at least 4.
 A \emph{theta} is a graph formed by three paths between the same pair
 of distinct vertices so that the union of any two of the paths
 induces a hole. This implies that each of the three paths has length
 at least~2 (a path of length~1 would form a chord of the cycle
 induced by the two other paths). A \emph{wheel} is a graph formed by
 a hole and a node that has at least 3 neighbors in the hole.

 In this series of papers we study the class of (theta, wheel)-free
 graphs, that we denote by $\mathcal C$ throughout the paper.  This
 project is motivated and explained in more detail in Part I of the
 series~\cite{twf-p1}, where two subclasses of $\mathcal C$ are
 studied.  In Part II of the series~\cite{twf-p2}, we prove a
 decomposition theorem for graphs in $\mathcal C$ that uses clique
 cutsets and 2-joins, and use it to obtain a polynomial time
 recognition algorithm for the class.  In Part III of the
 series~\cite{twf-p3} we use the decomposition theorem
 from~\cite{twf-p2} to obtain further properties of graphs in the
 class and to construct polynomial time algorithms for maximum weight
 clique, maximum weight stable set, and coloring problems.  In this
 part we use the decomposition theorem from~\cite{twf-p2} to obtain
 polynomial time algorithms for several problems related to finding
 induced paths and cycles.

 The \textsc{Disjoint Paths} problem is to test whether a graph $G$
 with $k$ pairs of specified vertices $(s_1,t_1), \ldots ,(s_k,t_k)$
 contains vertex-disjoint paths $P_1,\ldots ,P_k$ such that, for
 $i=1, \ldots ,k$, $P_i$ is a path from $s_i$ to $t_i$. When $k$ is
 part of the input, this problem is $\mathrm{NP}$-complete
 \cite{Karp-DisjointNP}.  If $k$ is any fixed integer (i.e.\ not part
 of the input) then the problem is called $k$-\textsc{Disjoint Paths}
 and can be solved in $\mathcal O(n^3)$ time as shown by the linkage
 algorithm of Robertson and Seymour~\cite{RS:GraphMinor13}.
 (Throughout the paper $n$ will denote the number of vertices and $m$
 the number of edges of an input graph.)  In this paper we consider
 the induced variant of this problem. We note that there are slight
 differences in the definition of this problem (for example in
 \cite{IDP-clawFPT}; see also Subsection \ref{sub:related}).

  \vspace{2ex}

  \noindent
  \textsc{Induced Disjoint Paths} problem $(G,\mathcal W)$
  \\
  {\bf Instance:} A graph $G$ and a
    set $\mathcal W=\{(s_1,t_1),(s_2,t_2),\ldots,(s_k,t_k)\}$ of pairs of
    vertices of $G$ such that all $2k$ vertices are distinct and the only
    edges between these vertices are of the form $s_it_i$, for some
    $1\leqslant i\leqslant k$.
  \\
  {\bf Question:} Does $G$ contain $k$ vertex-disjoint paths
  $P_i=s_i\ldots t_i$, $i\in \{ 1, \ldots ,k\}$, such that no vertex
  of $P_i$ is adjacent to a vertex of $P_j$, for every $i\neq j$?

  \vspace{2ex}

  For chordal
  graphs~\cite{DBLP:journals/algorithmica/BelmonteGHHKP14} this
  problem can be solved in time $\mathcal O(kn^3)$.  When $k$ is fixed
  (i.e.\ not part of the input) then the problem is called
  $k$-\textsc{Induced Disjoint Paths}.  The $k$-\textsc{Induced
    Disjoint Paths} is $\mathrm{NP}$-complete whenever $k\geq 2$ as
  proved by Bienstock~\cite{bienstock}, even when restricted to
  several classes of graphs (see \cite{leveque.lmt:detect}). It has
  been studied in several classes of graphs (but not many if compared
  to other problems such as graph coloring or maximum stable set for
  instance).  It can be solved for line graphs as a simple corollary
  of the linkage algorithm mentioned above. For any fixed $k$, it can
  be solved in linear time for planar
  graphs~\cite{DBLP:journals/jcss/KawarabayashiK12} and circular-arc
  graphs~\cite{DBLP:journals/tcs/GolovachPL16}, and in polynomial time for
  AT-free graphs~\cite{DBLP:conf/swat/GolovachPL12} and claw-free
  graphs~\cite{IDP-clawPoly}.

  Golovach, Paulusma and van Leeuewen \cite{IDP-clawFPT} proved
  that for claw-free graphs the $k$-\textsc{Induced Disjoint Paths} is
  fixed-parameter tractable when parameterized by $k$, meaning that it
  can be solved for any fixed $k$ in time $f(k) n^c$, where $c$ is a
  constant that does not depends on $k$, and $f$ is a computable
  function that depends only on $k$.

  Here, we prove that the \textsc{Induced Disjoint Paths} problem
  remains $\mathrm{NP}$-complete on $\mathcal C$, and we give a
  polynomial time algorithm for the $k$-\textsc{Induced Disjoint
    Paths} problem on $\mathcal C$.  We also consider a number of
  related problems, namely $k$-{\sc in-a-Path}, $k$-{\sc in-a-Tree}
  and $H$-{\sc Induced-Topological Minor}, to be defined in
  Section~\ref{sec:paths}.  These problems were already studied in
  differents settings and classes of graphs, see~\cite{chudnovsky.seymour:theta, nicolas.d.p:fourTree, DBLP:journals/algorithmica/FellowsKMP95,nicolas.wei:kTree}.

  \subsection*{Outline of the paper}

  In Section~\ref{sec:decTh} we state several results for the class
  ${\mathcal C}$ that are proved in previous parts of this series and that
  will be needed here. Most importantly, we give the decomposition
  theorem for ${\mathcal C}$ from~\cite{twf-p2} and recall some results
  that will help us manage the cutsets that appear in this
  decomposition theorem. In fact, fundamental for our algorithms are
  the 2-join decomposition techniques developed in
  \cite{nicolas.kristina:two}, which we also describe here.

  In Section \ref{sec:hole} we prove that for a graph $G\in\mathcal C$
  the only obstruction for the existence of an induced cycle through
  the given two non-adjacent vertices is a clique cutset of $G$ that
  separates these vertices.  Note that a similar result holds for
  claw-free graphs as proved by Bruhn and Saito~\cite{BruhnS12}.
  Using our result we derive an $\mathcal O(nm)$-time algorithm for
  the problem of finding and chordless cycle through two prescribed
  vertices of a graph in $\mathcal C$. Also, we show that $k$-\textsc{in-a-Cycle}
  problem is fixed-parameter tractable (when parameterized by $k$) for graphs
  in $\mathcal C$.

  In Section \ref{sec:paths} we give an $\mathcal O(n^{2k+6})$-time
  algorithm for the $k$-\textsc{Induced Disjoint Paths} for graphs in
  $\mathcal C$. As an intermediate step we show that the
  \textsc{Induced Disjoint Paths} problem is fixed-parameter tractable
  (when parameterized by $k$) for the class of graphs of $\mathcal C$
  that do not have clique cutsets. We also show that if $\mathcal G$
  is any hereditary class of graphs such that there exists an
  $\mathcal O(n^c)$-time algorithm (where $c$ is a constant that does
  not depend on $k$) for the $k$-\textsc{Induced Disjoint Paths} on
  the graphs of $\mathcal G$ that do not have clique cutsets, then the
  $k$-\textsc{Induced Disjoint Paths} problem can be solved on
  $\mathcal G$ in $\mathcal O(n^{2k+c})$-time.

  In Section \ref{sec:paths} we show that the generalization of the
  $k$-\textsc{Induced Disjoint Paths} problem where the terminals
  (i.e.\ vertices $s_1,\ldots ,s_k,t_1,\ldots ,t_k$) are not
  necessarily distinct and where there could possibly be edges between
  the terminal vertices, can be solved in $\mathcal O(n^{4k+6})$-time.
  Consequently we obtain polynomial-time algorithms for a number of
  related problems (on $\mathcal C$) such as $k$-\textsc{in-a-Path},
  $k$-\textsc{in-a-Tree} and
  $H$-\textsc{Induced Topological Minor}.

  Finally, in Section \ref{sec:NPC} we give several
  $\mathrm{NP}$-completeness results.

\subsection*{Terminology and notation}

A {\em clique} in a graph is a (possibly empty) set of pairwise
adjacent vertices.   A {\em stable set} in a graph is a (possibly empty) set
of pairwise nonadjacent vertices.  A {\it diamond} is a graph obtained
from a complete graph on 4 vertices by deleting an edge. A {\it claw}
is a graph induced by nodes $u,v_1,v_2,v_3$ and edges
$uv_1,uv_2,uv_3$.

A {\em path} $P$ is a sequence of distinct vertices
$p_1p_2\ldots p_k$, $k\geq 1$, such that $p_ip_{i+1}$ is an edge for
all $1\leqslant i <k$.  Edges $p_ip_{i+1}$, for $1\leqslant i <k$, are called
the {\em edges of $P$}.  Vertices $p_1$ and $p_k$ are the {\em endnodes}
of $P$.  A {\em cycle} $C$ is a sequence of vertices $p_1p_2\ldots p_kp_1$,
$k \geq 3$, such that $p_1\ldots p_k$ is a path and $p_1p_k$ is an
edge.  Edges $p_ip_{i+1}$, for $1\leqslant i <k$, and edge $p_1p_k$ are
called the {\em edges of $C$}.  Let $Q$ be a path or a cycle.  The
vertex set of $Q$ is denoted by $V(Q)$.  The {\em length} of $Q$ is
the number of its edges.  An edge $e=uv$ is a {\em chord} of $Q$ if
$u,v\in V(Q)$, but $uv$ is not an edge of $Q$. A path or a cycle $Q$
in a graph $G$ is {\em chordless} if no edge of $G$ is a chord of
$Q$.

Let $G$ be a graph. We denote the vertex set of $G$ by $V(G)$. For
$v\in V(G)$ and $T\subseteq V(G)$, $N_T(v)$
is the set of neighbors of $v$ in $T$.  Also, if $T=V(G)$, then we use
$N(v)$ to denote $N_T(v)$. For $S\subseteq V(G)$, $N_G(S)$ (or simply $N(S)$
when clear from context) denotes the set of vertices $u\in V(G)\setminus S$
such that for some $v\in S$, $uv$ is an edge of $G$. Also, for $S\subseteq V(G)$, $G[S]$ denotes the subgraph of $G$ induced by $S$ and $G\setminus S$ the
subgraph of $G$ induced by $V(G)\setminus S$.  For disjoint subsets $A$ and $B$ of
$V(G)$, we say that $A$ is {\em complete} (resp. {\em anticomplete})
to $B$ if every vertex of $A$ is adjacent (resp. nonadjacent) to every
vertex of $B$.

When clear from the context, we will sometimes write $G$ instead of
$V(G)$.

\section{Decomposition of (theta, wheel)-free graphs}
\label{sec:decTh}

To state the decomposition theorem for graphs in ${\mathcal C}$ we first
define the basic classes involved and then the cutsets used.

\subsection*{Basic classes}

If $R$ is a graph, then the {\em line graph} of $R$, denoted by
$L(R)$, is the graph whose vertices are the edges of $R$, and such
that two vertices of $L(R)$ are adjacent if and only if the
corresponding edges are adjacent in $R$. A graph $G$ is {\em
  chordless} if no cycle of $G$ has a chord.

In the decomposition theorem for (theta, wheel)-free graphs obtained
in \cite{twf-p2} we have two types of {\em basic graphs}: line graphs
of triangle-free chordless graphs and P-graphs. Their union is denoted
by $\mathcal B$. The definition of a P-graph is a bit technical (see
\cite{twf-p2}), and in this paper we do not need it in full.  We
therefore just list the properties that we need. Fortunately, these
properties are common to both types of basic graphs. Here they are:

\begin{itemize}
\item every graph $G\in\mathcal B$ is induced by $V(L(R))\cup K$, where
$V(L(R))$ and $K$ are disjoint;
\item $R$ is triangle-free and chordless and $K$ is a clique (it is
  possible that $K=\emptyset$);
\item each vertex of $L(R)$, that corresponds to an edge of $R$
  incident with a degree 1 vertex (in $R$), is adjacent to at most one
  vertex of $K$, and these are the only edges between $L(R)$ and $K$;
\item if a vertex $v$ of $L(R)$, that is of degree at least 2 in
  $L(R)$, is adjacent to a vertex $w$ of $K$, then $w$ has no other
  neighbors in $L(R)$ (note that this implies: if a vertex $v$ of
  $L(R)$ is adjacent to a vertex $w$ of $K$ that has some other
  neighbor in $L(R)$, then the degree of $v$ in $G$ is 2).
\end{itemize}

We say that $R$ is the skeleton and $K$ the special clique of $G$.
Clearly every line graph of a triangle-free chordless graph satisfies
the above properties. The fact that P-graphs satisfy them follows from
conditions (i), (vi) and (viii) in the definition of P-graphs in \cite{twf-p2}.
We also observe that all induced subgraphs of graphs in $\mathcal B$ satisfy
these properties. Note that each $G\in\mathcal B$ is
diamond-free (since $L(R)$ does not contain a diamond, each vertex of
$L(R)$ has at most one neighbor in $K$ and no vertex of $K$ is
adjacent to more than 1 vertex that is of degree at least 2 in
$L(R)$), and center of every claw of $G$ is contained in
$K$.

\subsection*{Cutsets}

In a graph $G$, a
subset $S$ of nodes and/or edges is a {\em cutset} if its removal yields
a disconnected graph.  A node cutset $S$ is a {\em clique cutset} if
$S$ is a clique.  Note that every disconnected graph has a clique
cutset: the empty set.

An {\em almost 2-join} in a graph $G$ is a pair $(X_1,X_2)$ that is a
partition of $V(G)$, and such that:

\begin{itemize}
\item For $i\in\{1,2\}$, $X_i$ contains disjoint nonempty sets $A_i$ and
  $B_i$, such that  $A_1$ is complete to $A_2$, $B_1$ is complete to
  $B_2$, and there are no other adjacencies between $X_1$ and $X_2$.
\item For $i\in\{1,2\}$, $|X_i|\geq 3$.
\end{itemize}

An almost 2-join $(X_1, X_2)$ is a \emph{2-join} when for $i\in\{1,2\}$, $X_i$
contains at least one path from $A_i$ to $B_i$, and if $|A_i|=|B_i|=1$
then $G[X_i]$ is not a chordless path.

We say that $(X_1,X_2,A_1,A_2,B_1,B_2)$ is a {\em split} of this (almost)
2-join, and the sets $A_1,A_2,B_1,B_2$ are the {\em special sets} of
this (almost) 2-join.  We often use the following notation:
$C_i = X_i\sm (A_i \cup B_i)$ (possibly, $C_i = \emptyset$).

We are ready to state the decomposition theorem from~\cite{twf-p2}.

\begin{theorem}[\cite{twf-p2}]\label{decomposeTW}
  If $G$ is (theta, wheel)-free, then $G$ is a line graph of a
  triangle-free chordless graph or a P-graph, or $G$ has a clique
  cutset or a 2-join.
\end{theorem}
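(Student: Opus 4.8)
\emph{Proof strategy.} Since this statement is cited from~\cite{twf-p2}, I only outline the route I would take. We may assume $G$ is connected and has no clique cutset (otherwise $\emptyset$, or that clique, is a clique cutset) and that $G$ is not basic; the goal is then to produce a $2$-join of $G$. If $G$ has no hole then $G$ is chordal, and a chordal graph with no clique cutset is a complete graph; since $K_n=L(K_{1,n})$ and a star is triangle-free and chordless, $G$ would be basic, a contradiction. So $G$ has a hole, and I would split into two cases according to whether $G$ contains a prism (two vertex-disjoint triangles joined by three vertex-disjoint paths) or a pyramid (a vertex joined to a triangle by three paths meeting only at that vertex, at most one of length~$1$).

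\emph{Case~1: $G$ contains no prism and no pyramid.} Then $G$ has no induced theta, prism, pyramid or wheel, i.e.\ $G$ is universally signable, and by the structure theorem for such graphs (Conforti, Cornu\'ejols, Kapoor and Vu\v{s}kovi\'c) a connected universally signable graph is a hole, a complete graph, or has a clique cutset. Since $G$ has no clique cutset, $G$ is a hole or a complete graph; a hole $C$ equals $L(C)$ with $C$ triangle-free and chordless, and a complete graph is a line graph of a star, so $G$ is basic, a contradiction.

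\emph{Case~2: $G$ contains a prism or a pyramid $\Sigma$.} This is the substantive case, and I would argue by extremality. Fix an induced subgraph $B$ of $G$ that contains $\Sigma$, is basic, and is maximal with this property (say, maximizing $|V(R)|$ for its skeleton $R$, then the size of its special clique $K$). For a component $D$ of $G\setminus(V(B)\cup N_G(V(B)))$, together with the set $N_G(D)\cap V(B)$ of its neighbors in $B$, one analyzes how $D$ attaches to $B$. The main leverage is theta- and wheel-freeness applied to the many holes of $B$: a vertex outside a hole has at most two neighbors on it, and exactly two only if they are consecutive (two non-adjacent neighbors would create a theta with the two arcs of the hole, three a wheel). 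Together with the facts recorded above that $B$ is diamond-free and that the center of every claw of $B$ lies in $K$, and with the maximality of $B$, this forces the attachment of $D$ to $B$ to be confined to a small rigid part of $R$; from such a confined attachment one reads off a $2$-join of $G$, with the two attachment sets inside $B$ serving essentially as two of the four special sets. If no such $D$ exists, then $B$ together with its remaining clique-type attachments is precisely a P-graph, so $G$ is basic, contradicting our assumption; hence a $2$-join must appear.

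The main obstacle is the end of Case~2: showing that whenever the attachment analysis fails to produce a $2$-join, the maximality of $B$ forces $G$ itself to be a P-graph. This requires the full technical definition of P-graphs from~\cite{twf-p2} (its conditions (i)--(viii)) and a careful check that the clique-attachment rules in the basic-class properties above hold exactly; it is also where the extremal choice of $B$ really matters, so that ``extend $B$'' is available whenever an attachment is not confined as expected. Secondary difficulties are a separate treatment of graphs too small to admit a $2$-join (each side has at least three vertices), the handling of holes of $B$ too short to anchor the argument, and the precise invocation of the universally-signable structure theorem in Case~1.
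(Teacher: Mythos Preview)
The paper does not prove this theorem; it is quoted verbatim from~\cite{twf-p2} and used as a black box. So there is no ``paper's own proof'' to compare against here, and your opening disclaimer that you are only sketching a route is appropriate.

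As an outline, your Case~1 is correct and clean: (theta, wheel, prism, pyramid)-free is exactly universally signable, and the Conforti--Cornu\'ejols--Kapoor--Vu\v{s}kovi\'c theorem disposes of it. Your Case~2 follows the standard ``grow a maximal basic skeleton and study attachments'' paradigm, which is indeed the shape of the argument in~\cite{twf-p2}. Two cautions, though. First, maximizing over \emph{all} basic induced subgraphs (including P-graphs) is awkward, because the P-graph axioms are global constraints on how the special clique meets the skeleton; the actual proof fixes a more concrete extremal object (a maximal line-graph-like piece built from a prism/pyramid) and only at the end shows that the residual clique attachments satisfy the P-graph axioms. Second, the sentence ``from such a confined attachment one reads off a 2-join'' hides essentially the entire paper~\cite{twf-p2}: the attachment analysis splits into many subcases (which side of a branch vertex the attachment hits, whether it touches one or two maximal cliques of $L(R)$, etc.), and in several of them the absence of a star cutset---not just a clique cutset---is what forces the 2-join. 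You invoke Lemma~\ref{Star=Clique} implicitly when you assume only ``no clique cutset'', but the forward direction of that lemma is itself part of the toolkit one needs inside the attachment analysis, so be careful about circularity if you were to write this out in full.
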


It trivially follows that if $G\in \mathcal C$ then $G\in \mathcal B$
or $G$ has a clique cutset or a 2-join.  We now describe how we
decompose a graph from ${\mathcal C}$ into basic graphs using the cutsets
in the above theorem.

\subsection*{Decomposing with clique cutsets}

If a graph $G$ has a clique cutset $K$, then
its node set can be partitioned into sets $(A,K,B)$, where $A$ and $B$
are nonempty and   anticomplete.  We say that $(A,K,B)$
is a \emph{split} for the clique cutset $K$.  When $(A, K, B)$ is a
split for a clique cutset of a graph $G$, the {\em blocks of decomposition}
of $G$ with respect to $(A, K, B)$ are the graphs $G_A=
G[A\cup K]$ and $G_B= G[K \cup B]$.

A \emph{clique cutset decomposition tree of depth $p$} for a graph $G$ is a rooted
tree $T$ defined as follows.

\begin{itemize}
\item[(i)] The root of $T$ is $G_0=G$.
\item[(ii)] The non-leaf nodes of $T$ are $G_0,G_1,\ldots,G_{p-1}$. Each non-leaf node $G_i$ has two children: one is $G_{i+1}$ and the other one is $G_{i+1}^{B}$.

    The leaf-nodes of $T$ are graphs $G_1^B,G_2^B,\ldots,G_p^{B}$ and $G_p$.
    Graphs $G_1^B,G_2^B,\ldots,G_p^B,G_p$ have no clique cutset.

\item[(iii)] For $i\in\{0,1,\ldots,p-1\}$, $G_i$ has a clique cutset split $(A_i,K_i,B_i)$ and graphs $G_{i+1}=G[A_i\cup K_i]$ and $G_{i+1}^{B}=G[B_i\cup K_i]$ are blocks of decomposition of $G_i$ w.r.t.\ this clique cutset split.
\end{itemize}

Also, we set $B_p=A_{p-1}$, $K_p=K_{p-1}$ and $G_{p+1}^B=G_p$. So,
the leaves of $T$ are the graphs $G_{i+1}^B = G[K_i \cup B_i]$, for $i\in\{0,1,\dots, p\}$.

\begin{theorem}[\cite{tarjan}]
  \label{th:tarjan}
  A clique cutset decomposition tree of an input graph $G$ can be
  computed in time $\mathcal O(nm)$ and has $\mathcal O(n)$ nodes.
\end{theorem}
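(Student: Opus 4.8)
Since this theorem is due to Tarjan~\cite{tarjan}, the plan is not to reprove the $\mathcal O(nm)$ decomposition algorithm but to explain how the tree $T$ of the statement is read out of it, and to check the $\mathcal O(n)$ bound directly. The only structural point to note is that whenever a graph $H$ has a clique cutset it has one, say with split $(A,K,B)$, for which the block $H[B\cup K]$ has no clique cutset: start from any clique cutset split of $H$ and keep decomposing one side; a size-minimal block so obtained is attached to the rest of $H$ along a clique and has no clique cutset, which is exactly what is needed. Applying this to $G_0=G$, then to the ``$A$-side'' block $G_1$, then to $G_2$, and so on, peels off at each step an atom $G_{i+1}^B$ and leaves a strictly smaller graph $G_{i+1}$; this is precisely the tree $T$.

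For the size bound, in the $i$-th split we have $V(G_{i+1})=A_i\cup K_i=V(G_i)\sm B_i$ with $B_i\neq\emptyset$, hence $|V(G_{i+1})|\le|V(G_i)|-1$, and therefore $p\le n-1$. Since $T$ is a full binary tree (each non-leaf $G_i$, with $i<p$, has exactly two children), it has $p$ internal nodes, $p+1$ leaves, and $2p+1=\mathcal O(n)$ nodes in total.

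For the running time I would follow Tarjan: compute in $\mathcal O(nm)$ time a minimal triangulation $H\supseteq G$ together with a perfect elimination ordering $v_1,\dots,v_n$ of $H$, so that for each $i$ the higher neighbourhood $X_i=\{v_j:j>i,\ v_iv_j\in E(H)\}$ is a clique of $H$; a single $\mathcal O(m)$ scan of the fill edges $E(H)\sm E(G)$ then records which $X_i$ are also cliques of $G$. One next processes $v_1,\dots,v_n$ in order while maintaining the connected components of the part of $G$ not yet peeled off: whenever the current $X_i$ is a clique of $G$ that properly separates the component of $v_i$ from the rest, that component together with $X_i$ is the next atom to output, and it is removed from further consideration. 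There are at most $n$ such peeling events, each handled in $\mathcal O(m)$ time with suitable data structures, so the decomposition costs $\mathcal O(nm)$ on top of the triangulation, i.e. $\mathcal O(nm)$ in all. The main obstacle — and the reason to cite~\cite{tarjan} rather than write it out — is the correctness and amortized time analysis of this single pass: that every atom produced really has no clique cutset, that the leftover graph $G_p$ is itself an atom, and that component maintenance over all $n$ events totals $\mathcal O(nm)$. Everything else, namely the repackaging into $T$ and the node count, is routine and is handled in the two paragraphs above.
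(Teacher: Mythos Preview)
The paper does not prove this theorem at all: it is stated with the citation~\cite{tarjan} and used as a black box. Your write-up therefore goes beyond what the paper does, and the parts you actually argue are correct. The node-count argument is exactly right: since $V(G_{i+1})=V(G_i)\sm B_i$ with $B_i\neq\emptyset$, the depth $p$ is at most $n-1$, and the tree, being a caterpillar-shaped full binary tree with $p$ internal nodes and $p+1$ leaves, has $2p+1=\mathcal O(n)$ nodes. Your observation that one can always choose the split so that the $B$-side block is already an atom is also correct and matches how the decomposition tree is defined in the paper (each $G_{i+1}^B$ is required to have no clique cutset). Your sketch of the $\mathcal O(nm)$ algorithm is a faithful summary of Tarjan's method, and you are right to flag the amortized correctness and timing analysis of the single pass as the nontrivial content that belongs in~\cite{tarjan} rather than here.
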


Note that for a non-leaf node $G_i$ of $T$ the corresponding
clique cutset $K_i$ is also a clique cutset of $G$.
The following lemmas proved in \cite{twf-p1} will also be needed.

\begin{lemma}[Lemma 3.2 in \cite{twf-p1}]\label{diamondCliqueCut}
  If $G$ is a wheel-free graph that contains a diamond, then $G$ has a
  clique cutset.
\end{lemma}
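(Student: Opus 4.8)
The plan is to argue by contradiction: suppose $G$ is wheel-free, contains a diamond, but has no clique cutset. Fix a diamond $D$ in $G$ on vertices $\{a,b,c,d\}$ with the nonedge $cd$ (so $a,b$ are the two vertices of degree $3$ in $D$, and $ab,ac,ad,bc,bd$ are edges). The idea is to look at how a diamond can sit inside a graph with no clique cutset and show that we can always ``grow'' it into a wheel, contradicting wheel-freeness.

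First I would observe that $\{a,b\}$ is a clique but is not a clique cutset of $G$, so $G\setminus\{a,b\}$ is connected; in particular there is an induced path $P$ in $G\setminus\{a,b\}$ from $c$ to $d$. Choosing $P$ to be a shortest such path (hence chordless), the cycle $c P d c$ together with the vertices $a$ and $b$ is a natural candidate for producing a wheel. The key point is to control the attachments of $a$ and $b$ to $V(P)$. If $P$ has length $1$, i.e.\ $c$ and $d$ were adjacent, that contradicts $cd$ being a nonedge; so $P$ has length at least $2$ and $C=cPdc$ is a hole, or $P$ has length $\ge 2$ but $a$ (or $b$) could have extra neighbors on $P$. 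Since $a$ is adjacent to both $c$ and $d$ and these are the endnodes of $P$, the vertex $a$ has at least two neighbors on the hole $C' := P \cup \{$edge-structure$\}$; I need to pin down when it has at least three, which would immediately give a wheel with hub $a$.

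The main case analysis is therefore: among all choices of induced $c$--$d$ paths $P$ in $G\setminus\{a,b\}$, pick one minimizing $|N(a)\cap V(P)| + |N(b)\cap V(P)|$ (or some similar potential). If either $a$ or $b$ has $\ge 3$ neighbors on the hole formed by $P\cup\{$the edge closing it$\}$ — but note $P$ plus the single vertex structure is not yet a cycle unless $c,d$ have a common... — more carefully: the cycle we use is $C = c\,P\,d\,c$ only if $cd$ were an edge, which it is not. The correct hole to use is $P$ extended through $a$: namely $a c \dots d a$ (the path $P$ with $a$ appended at both ends) is a hole $H_a$ of length $|P|+2 \ge 4$, and similarly $H_b$ through $b$. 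Now $b$ is adjacent to $c$ and $d$, both on $H_a$, and also to $a\in V(H_a)$, so $b$ has at least $3$ neighbors on $H_a$ — yielding a wheel with hub $b$. The only way this fails is if $b$ is adjacent to \emph{every} vertex of $H_a$, making $H_a$ not a hole on which $b$ is a proper wheel-center; so the remaining work is to rule out, or exploit, the situation where $b$ (resp.\ $a$) is complete to the whole path $P$.

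The step I expect to be the main obstacle is exactly this degenerate case: when $a$ and $b$ are each complete to $V(P)$. Then $P$ must be very short (a single internal vertex, else $a$ has three neighbors on a subpath and one gets a smaller diamond / shorter path reaching a contradiction via minimality), so $P = c\,x\,d$ with $x$ adjacent to both $a$ and $b$ — but then $\{a,b,c,d,x\}$ induces a wheel-free configuration that I must handle by re-choosing the path in $G\setminus\{a,x\}$ or $G\setminus\{b,x\}$, again using that these cliques are not cutsets. Iterating, one shows the ``all-complete'' situation cannot persist, because $G$ is finite and has no clique cutset, so eventually an induced path appears whose endnodes' common neighbors $a,b$ give a genuine wheel. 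Pinning down this iteration cleanly — choosing the right clique of size $2$ (or the diamond plus one more vertex) whose non-cutset property is invoked at each step — is the delicate part; everything else is a routine ``find a chordless path, count neighbors on the induced hole, get a wheel'' argument.
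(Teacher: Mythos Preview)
This lemma is not proved in the present paper; it is quoted from Part~I of the series \cite{twf-p1}, so there is no in-paper argument to compare against and I can only assess your proposal on its own.

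Your outline has two genuine gaps. First, you misidentify the obstruction: once $H_a = a\,c\,P\,d\,a$ is actually a hole, $(H_a,b)$ \emph{is} a wheel regardless of how many neighbours $b$ has on it --- a hub adjacent to every vertex of the hole is still a wheel under the paper's definition (``at least 3 neighbours''). The real problem is that $H_a$ may fail to be a hole because $a$ has neighbours in the interior of $P$, and you never treat the case where $a$ has some but not all interior vertices of $P$ as neighbours; you jump directly from ``$H_a$ is a hole'' to ``$a,b$ are both complete to $V(P)$''. Second, in your degenerate case $P=c\,x\,d$ with $a,b$ complete to $P$, the proposed iteration does not terminate: removing the clique $\{a,b,x\}$ and taking a new shortest $c$--$d$ path may yield $c\,x'\,d$ with $a,b$ again complete, and the next step (removing $\{a,b,x'\}$) can return $c\,x\,d$; ``$G$ is finite'' is not by itself a decreasing measure. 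A workable version of the first part chooses the diamond and the path jointly to minimise $|V(P)|$ over all diamonds, and uses wheel-freeness on the hole $a\,p_0\cdots p_i\,a$ (and its $b$-analogue) to force the first interior neighbour of each of $a,b$ to be $p_1$, yielding a strictly shorter path for the new diamond $\{a,b,p_1,d\}$ whenever $k\ge 3$. But this still leaves the $k=2$ configuration, and the sketch you give there is not yet a proof.
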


A {\em star cutset} in a graph is a node cutset $S$ that contains a
node (called a {\em center}) adjacent to all other nodes of $S$. Note
that a nonempty clique cutset is a star cutset.

\begin{lemma}[Lemma 3.3 in \cite{twf-p1}]\label{Star=Clique}
  If $G\in\mathcal C$ has a star cutset, then $G$ has a clique cutset.
\end{lemma}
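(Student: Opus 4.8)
The plan is to prove the lemma by directly exhibiting a clique cutset: a star cutset of minimum size will turn out to be a clique, the obstruction to non-cliqueness being a theta or a wheel in $G$. So fix a star cutset $S$ of $G$, with center $c$, chosen with $|S|$ minimum among all star cutsets of $G$. If $S$ is a clique we are done, since $S$ is then a clique cutset; so assume for contradiction that $S$ is not a clique.

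The key preliminary step, which is where minimality of $|S|$ is used, is to show that every vertex $s\in S\setminus\{c\}$ has a neighbour in every connected component of $G\setminus S$. Suppose not: some $s\in S\setminus\{c\}$ has no neighbour in some component $D$ of $G\setminus S$. Consider $G\setminus(S\setminus\{s\})$; here $s$ is adjacent only to vertices of components of $G\setminus S$ it already touched, in particular not to $D$, so $D$ remains a component and is still separated from any other component of $G\setminus S$ (which exists, since $S$ is a cutset). Hence $S\setminus\{s\}$ is a cutset; as it still contains $c$ and is contained in $N(c)\cup\{c\}$, it is a star cutset, of size smaller than $|S|$, a contradiction.

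Now since $S$ is not a clique and $c$ is complete to $S\setminus\{c\}$, there are two nonadjacent vertices $u,v\in S\setminus\{c\}$, both adjacent to $c$. Pick two distinct components $A$ and $B$ of $G\setminus S$. By the preliminary step, $u$ and $v$ each have a neighbour in $A$ and a neighbour in $B$, so since $A$ is connected there is a path from $u$ to $v$ with interior in $A$, and a shortest such path $P_A$ is chordless; similarly take a chordless path $P_B$ from $u$ to $v$ with interior in $B$, and let $P_c$ be the path of length $2$ from $u$ to $v$ through $c$. The interiors of $P_A$ and $P_B$ are nonempty (as $uv\notin E(G)$), they lie in distinct components of $G\setminus S$ and so are anticomplete, and one checks directly that $H:=V(P_A)\cup V(P_B)$ induces a hole, of length at least $4$, with $u,v\in V(H)$ and $c\notin V(H)$.

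It remains to run the dichotomy on the neighbours of $c$ on $H$: the vertex $c$ has at least the two neighbours $u,v$ on $H$. If $c$ has a third neighbour on $H$, then $H$ together with $c$ is a wheel, contradicting wheel-freeness of $G$. Otherwise $u$ and $v$ are the only neighbours of $c$ on $H$, and then $P_A\cup P_c$ and $P_B\cup P_c$ are holes too (using that $P_A,P_B$ are chordless, that $uv\notin E(G)$, and that $c$ has no neighbour in the interiors of $P_A$ or $P_B$); thus $P_A,P_B,P_c$ are three internally disjoint paths between $u$ and $v$, each of length at least $2$, whose pairwise unions are holes, i.e.\ a theta, contradicting theta-freeness of $G$. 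Either way we reach a contradiction, so $S$ is a clique and hence a clique cutset. I expect the only mildly delicate point to be the minimality reduction of the second paragraph; note in particular that $c$ may have no neighbour in $A$ or in $B$, which is exactly why the wheel/theta dichotomy, rather than a direct structural claim about $c$, is the right tool here (and why Lemma~\ref{diamondCliqueCut} is not needed for this argument).
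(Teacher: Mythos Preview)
Your proof is correct. The minimality reduction in the second paragraph is sound: if $s\in S\setminus\{c\}$ misses a component $D$, then in $G\setminus(S\setminus\{s\})$ the set $D$ is still anticomplete to everything outside $D$ (it was anticomplete to the other components and to $s$), so $S\setminus\{s\}$ remains a cutset, and it is still a star with center $c$. The theta/wheel dichotomy is then exactly right: the hole $H=P_A\cup P_B$ is chordless because the interiors live in distinct components of $G\setminus S$ and $uv\notin E(G)$, and $c\notin V(H)$ has at least the two neighbours $u,v$ on $H$.

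As for comparison: the present paper does not prove this lemma at all --- it is quoted from Part~I of the series (\cite{twf-p1}, Lemma~3.3) and used here as a black box. Your argument is the natural self-contained proof, and is essentially the standard one for statements of this type (minimal star cutset is a clique in a class forbidding suitable three-path configurations). There is nothing to correct.
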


\subsection*{Decomposing with 2-joins}

We first state some properties of 2-joins in graphs with no clique cutset.
Let $\mathcal D$ be the class of all graphs from $\mathcal C$ that do
not have a clique cutset. By Lemma~\ref{Star=Clique} no graph from
$\mathcal D$ has a star cutset and by Lemma \ref{diamondCliqueCut} no
graph from $\mathcal D$ contains a diamond.

An almost 2-join with a split $(X_1, X_2, A_1, A_2, B_1, B_2)$ in a
graph $G$ is \emph{consistent} if the following statements hold for
$i=1, 2$:

\begin{enumerate}
\item Every component of $G[X_i]$ meets both $A_i$, $B_i$.
\item Every node of $A_i$ has a non-neighbor in $B_i$.
\item Every node of $B_i$ has a non-neighbor in $A_i$.
\item Either both $A_1$, $A_2$ are cliques, or one of $A_1$ or $A_2$ is
  a single node, and the other one is a disjoint union of cliques.
\item Either both $B_1$, $B_2$ are cliques, or one of $B_1$, $B_2$ is
  a single node, and the other one is a disjoint union of cliques.
\item $G[X_i]$ is connected.
\item For every node $v$  in $X_i$, there exists a path in $G[X_i]$
  from $v$ to some node of $B_i$ with no internal node in $A_i$.
\item For every node $v$  in $X_i$, there exists a path in $G[X_i]$
  from $v$ to some node of $A_i$ with no internal node in $B_i$.
\end{enumerate}

Note that the definition contains redundant statements (for instance, (vi)
implies (i)), but it is convenient to list properties separately as above.

\begin{lemma}[Lemma 6.1 in \cite{twf-p1}]
  \label{l:consistent}
  If $G\in\mathcal D$, then
  every almost 2-join of $G$ is consistent.
\end{lemma}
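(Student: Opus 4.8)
The statement to prove is Lemma~\ref{l:consistent}: if $G \in \mathcal D$ (that is, $G$ is (theta, wheel)-free with no clique cutset), then every almost 2-join of $G$ is consistent, i.e.\ satisfies properties (i)--(viii). Since the lemma is quoted from \cite{twf-p1}, the "proof" here is really a verification; I would fix an almost 2-join of $G$ with split $(X_1, X_2, A_1, A_2, B_1, B_2)$ and establish the eight properties one at a time, using throughout the two facts already recorded in the excerpt: $G$ has no star cutset (by Lemma~\ref{Star=Clique}) and $G$ is diamond-free (by Lemma~\ref{diamondCliqueCut}). Many of the properties will follow from a common move: if some adjacency pattern is too rich, then $A_i \cup A_2$ or a similar small set becomes a star cutset (or clique cutset), contradicting $G \in \mathcal D$.

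**The main steps.** First, for (i): if some component $D$ of $G[X_i]$ misses $B_i$ (say), then since $A_1$ is complete to $A_2$ and $B_1$ to $B_2$, the only way out of $D$ is through $A_i$; as $A_i$ is complete to $A_{3-i}$ this would make $A_i$ act as a cutset separating $D$ from the rest, and I would argue it can be shrunk to a star/clique cutset unless $D$ actually does meet $B_i$ — so (i) holds (and (vi) follows by a slightly more careful version of the same argument, using that a disconnected $G[X_i]$ forces a star cutset once one also uses (i)). For (ii) and (iii): if a vertex $a \in A_i$ is complete to $B_i$, then $a$ together with $A_{3-i}$ and $B_{3-i}$ would dominate all of $X_i \setminus \{a\}$'s connections to $X_{3-i}$ — more precisely, $\{a\} \cup N(a)$ or a judicious subclique would be a star cutset separating the two sides — contradiction; this is where diamond-freeness gets used to control the structure of $A_i, B_i$. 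For (iv) and (v): these are the structural ones. If, say, $A_1$ is not a clique, pick nonadjacent $a, a' \in A_1$; any vertex of $A_2$ is complete to both, so if $A_2$ had two vertices they would, with $a, a'$, either form a theta-like configuration or a wheel, or force $A_2$ to be a single vertex; and once $A_2$ is a single vertex one shows $A_1$ is a disjoint union of cliques, again because a bigger "clique component" structure would create a diamond or a wheel centered at the $A_2$-vertex. Finally (vii) and (viii) are "reachability with a forbidden internal set" statements: given $v \in X_i$, one takes a shortest path from $v$ to $B_i$ in $G[X_i]$ (which exists by (i)/(vi)), and argues that if every such path had an internal vertex in $A_i$, then a minimal separator inside $A_i$ would again be promotable to a star cutset separating $v$ from $B_i$ in $G$.

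**Where the difficulty lies.** The routine cases are (i), (vi), (vii), (viii): they are all variations on "a separator inside $A_i$ or $B_i$ that is not already a clique can be refined to a star cutset, contradicting Lemma~\ref{Star=Clique}." The genuinely delicate part is (iv) and (v) — pinning down that the non-clique side must be a disjoint union of cliques and the opposite side a single vertex. Here one must simultaneously exploit theta-freeness (two nonadjacent vertices of $A_1$ with two common neighbors in $A_2$ start building three internally disjoint paths), wheel-freeness (a vertex of $A_2$ with three neighbors in a hole that lives partly inside $X_1$), and diamond-freeness (to forbid a vertex of $A_2$ seeing an edge of $A_1$ plus more). I expect the bulk of the argument — and the only place real case analysis is unavoidable — to be the proof that $A_1$ not a clique forces $|A_2| = 1$ and then that $A_1$ decomposes into cliques; everything else should fall out of the star-cutset lemma with short arguments.
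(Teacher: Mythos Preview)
The paper does not prove Lemma~\ref{l:consistent}; it is quoted as Lemma~6.1 from \cite{twf-p1} and used as a black box throughout. There is therefore no proof in the present paper to compare your proposal against.

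Your plan is a reasonable outline of how such a result is established (and presumably how it is done in \cite{twf-p1}): the reachability and connectivity properties (i), (vi), (vii), (viii) are star-cutset arguments via Lemma~\ref{Star=Clique}; properties (ii) and (iii) are as well --- diamond-freeness is not really the driver there, the point is that if $a\in A_1$ is complete to $B_1$ then $\{a\}\cup N(a)$ contains $A_2\cup B_1$ and hence separates $X_2\setminus A_2$ from whatever remains of $X_1$; and (iv), (v) are indeed where theta-, wheel-, and diamond-freeness genuinely enter. One caution for when you fill in the details: in each star-cutset argument you must verify that both sides of the proposed separation are nonempty, and the degenerate subcase where the candidate center already dominates all of $X_i$ requires a short auxiliary argument (typically producing a diamond or falling back on $|X_i|\geq 3$).
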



We now define the blocks of decomposition of a graph with respect to a
2-join.  Let $G$ be a graph and $(X_1, X_2,A_1,A_2,B_1,B_2)$ a split of a 2-join of $G$.
Let $k_1$ and $k_2$ be positive integers. The
\emph{blocks of decomposition} of $G$ with respect to $(X_1, X_2)$ are
the two graphs $G_1^{k_1}$ and $G_2^{k_2}$ that we describe now.  We obtain $G_1^{k_1}$
from $G$ by replacing $X_2$ by a \emph{marker path} $P^2= a_2 \ldots
b_2$ of length $k_1$, where $a_2$ is a node complete to $A_1$, $b_2$ is a node complete
to $B_1$, and  $V(P_2)\setminus \{ a_2,b_2\}$ is anticomplete to $X_1$.  The
block $G_2^{k_2}$ is obtained similarly by replacing $X_1$ by a marker path
$P^1 = a_1\ldots b_1$ of length $k_2$.

In \cite{twf-p2} the blocks of decomposition w.r.t.\ a 2-join that we used in construction of a
recognition algorithm had marker paths of length 2. In this paper we will use blocks whose
marker paths are of length 5. So, unless otherwise stated, when we say that $G_1$ and
$G_2$ are blocks of decomposition w.r.t.\ a 2-join we will mean that their marker paths are of length~5. This will be discussed in more details in Section \ref{sec:paths}.


\begin{lemma}[Lemma 2.10 in \cite{twf-p3}]
  \label{new2}
  Let $G$ be a graph from ${\mathcal D}$. Let $(X_1, X_2)$ be a 2-join of $G$, and $G_1$,
  $G_2$ the blocks of decomposition with respect to this 2-join whose marker
  paths are of length at least~2. Then
  $G_1$ and $G_2$ are in ${\mathcal D}$ and they do not have star cutsets.
\end{lemma}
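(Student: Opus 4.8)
The plan is to prove Lemma~\ref{new2} in two stages: first show that each block $G_i$ belongs to $\mathcal D$, and then show that no $G_i$ has a star cutset. Recall that $\mathcal D$ is the class of graphs in $\mathcal C$ that have no clique cutset, so membership in $\mathcal D$ requires two things: that $G_i$ is (theta, wheel)-free, and that $G_i$ has no clique cutset. For the first part I would argue by contradiction: suppose $G_1$ contains a theta or a wheel $H$. Since the marker path $P^2 = a_2 \dots b_2$ has length at least~2, it behaves, from the point of view of $X_1$, exactly like a genuine chordless path of $G$ from a node complete to $A_1$ to a node complete to $B_1$ with no other attachment in $X_1$; by consistency of the 2-join (Lemma~\ref{l:consistent}, applicable since $G \in \mathcal D$) there is a path in $G[X_2]$ from $A_2$ to $B_2$, and more precisely we can route through $X_2$ a chordless path from $a_2'\in A_2$ to $b_2'\in B_2$ whose interior avoids $A_2\cup B_2$ (using statements (vi)--(viii) of consistency). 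Replacing, in the copy of $H$ living in $G_1$, the portion of $H$ that meets $P^2$ by such a path through $X_2$ yields an induced theta or wheel in $G$, a contradiction. One must be a little careful about the cases where $H$ uses none of $P^2$, only an endpoint $a_2$ or $b_2$, or a proper interior subpath of $P^2$; in each case the substitution is routine but should be checked. This is essentially the standard "blocks of a 2-join inherit the forbidden-subgraph property" argument, and since the marker path length is $\geq 2$ it goes through unchanged whether the length is $2$ or $5$.

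Next, to see that $G_i$ has no clique cutset, I would again use consistency together with the fact that $G\in\mathcal D$ has no clique cutset. Suppose for contradiction that $G_1$ has a clique cutset $Q$. Since the marker path $P^2$ has length at least~2, at most one of its internal nodes could lie in a clique $Q$ — in fact a chordless path of length $\geq 2$ contains no triangle, so $Q$ meets $V(P^2)$ in at most two consecutive nodes, and $Q\cap V(P^2)$ being a clique containing at most an edge of $P^2$. One then translates a clique cutset of $G_1$ into a clique cutset (or star cutset) of $G$: if $Q$ avoids the interior of $P^2$, then $Q\subseteq X_1\cup\{a_2,b_2\}$, and one replaces $a_2$ (resp.\ $b_2$) by $A_2$ (resp.\ $B_2$) — but here one must invoke consistency items (iv) and (v), which say that $A_2$ and $B_2$ are ``almost'' cliques, so the replacement is a clique or a star (centered appropriately); if $Q$ does meet the interior of $P^2$, then since the rest of $P^2$ separated by $Q$ would have to attach back, one gets that $Q$ essentially separates $G$ along the 2-join, contradicting that $G$ has no clique cutset (or one derives a star cutset of $G$ and applies Lemma~\ref{Star=Clique}). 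In all cases we contradict $G\in\mathcal D$, so $G_1$ (and symmetrically $G_2$) has no clique cutset, hence $G_i\in\mathcal D$.

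Finally, the ``no star cutset'' conclusion follows immediately from the previous step combined with Lemma~\ref{Star=Clique}: we have just shown $G_i\in\mathcal C$, and Lemma~\ref{Star=Clique} says that any graph in $\mathcal C$ with a star cutset has a clique cutset; since $G_i$ has no clique cutset, it has no star cutset either. So the only real work is the two contradiction arguments in the first two paragraphs.

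The main obstacle I expect is the clique-cutset transfer argument, specifically the bookkeeping around the special sets $A_2, B_2$ when a putative clique cutset $Q$ of $G_1$ touches $a_2$ or $b_2$ (or the interior of $P^2$). Because $A_2$ and $B_2$ need not be cliques — only ``one is a single node and the other a disjoint union of cliques,'' by consistency (iv)--(v) — the naive replacement of $a_2$ by $A_2$ may not produce a clique, and one has to argue that it produces a \emph{star} cutset of $G$ and then invoke Lemma~\ref{Star=Clique}; handling the subcase split here (single node vs.\ disjoint union of cliques, and which side $Q$ lands on) is where the case analysis is heaviest. The theta/wheel inheritance part, by contrast, is a clean substitution argument that I do not expect to cause trouble, since the hypothesis that the marker path has length $\geq 2$ is exactly what makes a marker path indistinguishable from a real path for the purpose of detecting these induced subgraphs.
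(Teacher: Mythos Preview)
The paper does not prove this lemma: it is stated with attribution to \cite{twf-p3} (Lemma~2.10 there) and no argument is given here, so there is nothing in the present paper to compare your attempt against directly.

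That said, your three-step strategy---(1) each block is (theta, wheel)-free via the substitution of a genuine chordless $A_2$--$B_2$ path in $X_2$ for the marker path, (2) each block has no clique cutset by pulling a hypothetical clique cutset $Q$ of $G_1$ back to a clique or star cutset of $G$, and (3) deducing ``no star cutset'' from Lemma~\ref{Star=Clique}---is exactly the standard route used in this series of papers, and it is sound. Your diagnosis that step~(3) is free once (1) and (2) hold, and that the clique-cutset transfer in~(2) is where the real case analysis lives, is correct.

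One refinement that trims the bookkeeping you flagged: in the subcase where $Q$ contains $a_2$, the component $C$ of $G_1\setminus Q$ not meeting $P^2\setminus\{a_2\}$ satisfies $C\subseteq X_1$ and $C\cap B_1=\emptyset$, whence $N_G(C)\subseteq (Q\cap X_1)\cup A_2$. If $A_2$ is a clique you are done. If not, consistency~(iv) forces $|A_1|=1$, so $Q\cap A_1\subseteq\{a_1\}$; then either $C\setminus\{a_1\}\neq\emptyset$ and $(Q\cap X_1)\cup\{a_1\}$ is already a \emph{clique} cutset of $G$, or $C=\{a_1\}$, which contradicts connectivity of $G[X_1]$ (consistency~(vi)). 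So you never actually need to manufacture a genuine non-clique star cutset of $G$ in this branch, which removes the heaviest part of the case split you anticipated.
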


A 2-join $(X_1,X_2)$ of $G$ is a {\em minimally-sided 2-join} if for some $i\in \{ 1,2 \}$ the following holds: for every 2-join $(X_1',X_2')$ of $G$, neither $X_1'\subsetneq X_i$ nor $X_2'\subsetneq X_i$. In this case $X_i$ is a {\em minimal side} of this minimally-sided 2-join.

A 2-join $(X_1,X_2)$ of $G$ is an {\em extreme 2-join} if for some $i\in \{ 1,2 \}$ and all $k\geq 3$ the block of decomposition $G_i^k$ has no 2-join.
In this case $X_i$ is an {\em extreme side} of such a 2-join.

Graphs in general do not necessarily have extreme 2-joins (an example is given in \cite{nicolas.kristina:two}), but it is shown in  \cite{nicolas.kristina:two}
that graphs with no star cutset do. It is also shown in \cite{nicolas.kristina:two} that if $G$ has no star cutset then the blocks of decomposition w.r.t.\ a 2-join
whose marker paths are of length at least 3, also have no star cuset. This is then used to show that in a graph with no star cutset, a minimally-sided 2-join is extreme.
We summarize these results in the following lemma.

A {\it flat path} of $G$ is any path of $G$ of length at least 3, whose
interior vertices are of degree 2 (this definition is slightly different than in \cite{nicolas.kristina:two},
but this does not affect the proof of part (iii) of the following lemma).
The following statement can be extracted  from Lemmas 3.2, 4.2, 4.3 and 4.4 of  \cite{nicolas.kristina:two}.

\begin{lemma}[\cite{nicolas.kristina:two}]\label{extreme}
Let $G$ be a graph with no star cutset. Let
$(X_1,X_2,A_1,A_2,B_1,B_2)$ be a split of a minimally-sided 2-join of
$G$ with $X_1$ being a minimal side, and let $G_1$ and $G_2$ be the
corresponding blocks of decomposition whose marker paths are of length at least 3.
Then the following hold:
\begin{enumerate}
\item $|A_1|\geq 2$, $|B_1|\geq 2$, and in particular all the vertices of $A_2\cup B_2$ are of degree at least~3.
\item If $G_1$ and $G_2$ do not have star cutsets, then 
$(X_1,X_2)$ is an extreme 2-join, with $X_1$ being an extreme side (in particular, $G_1$ has no 2-join).
\item If $P$ is a flat path of $G$, such that $P\cap X_1\neq \emptyset$ and $P\cap X_2\neq\emptyset$, then one of the following holds:
    \begin{itemize}
      \item[(a)] for an endnode $u$ of $P$, $P\setminus u\subseteq X_1$ and $u\in A_2\cup B_2$;
      \item[(b)] for endnodes $u$ and $v$ of $P$, $u\in A_2$, $v\in
        B_2$, $P\setminus\{u,v\}\subseteq X_1$, the length of $P$ is
        at least 3 and $G[X_1]$ has exactly two connected components that are both a path with one end in $A_1$, one end in $B_1$ and interior in $C_1$.
    \end{itemize}
\end{enumerate}
\end{lemma}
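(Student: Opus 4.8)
The statement is, in essence, a repackaging of material from~\cite{nicolas.kristina:two}, so the plan is, for each item, to say which ingredient there supplies it and how they combine; the only genuinely new point to check is that the (slightly weaker) notion of flat path used here still suffices for~(iii).

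For~(i), the plan is to argue by minimality of the side $X_1$. Suppose for contradiction that $A_1=\{a\}$ (the case $|B_1|=1$ being symmetric). As $a\in A_1$ has no neighbour in $X_2\sm A_2$, we have $N(a)=A_2\cup(N(a)\cap X_1)$, and one checks that $(X_1\sm\{a\},\,X_2\cup\{a\})$ can be turned into a $2$-join of $G$ by taking, on the enlarged side, the special sets $B_2$ (which is complete to $B_1$) and $\{a\}$ (which is complete to $N(a)\cap X_1$); its side $X_1\sm\{a\}$ is strictly contained in $X_1$, contradicting minimality. The degenerate configurations that could obstruct this --- namely $|X_1|=3$, or $G[X_1\sm\{a\}]$ being a chordless path whose relevant special sets are singletons, or $N(a)$ meeting $B_1$ --- are dealt with separately, each either contradicting that $(X_1,X_2)$ is a genuine $2$-join or being handled as in Lemma~3.2 of~\cite{nicolas.kristina:two}. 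Once $|A_1|,|B_1|\ge 2$ is known, a vertex $v\in A_2$ is complete to $A_1$ and so has at least two neighbours, plus a further neighbour inside $X_2$ coming from the consistency properties of the $2$-join established there; hence $\deg_G(v)\ge 3$, and symmetrically for $B_2$.

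For~(ii), the plan is: first invoke the results of~\cite{nicolas.kristina:two} (Lemmas~4.2--4.3) to the effect that, in a graph with no star cutset, the blocks of decomposition with respect to a $2$-join whose marker paths have length at least~$3$ are again star-cutset-free --- so, under the hypothesis, $G_1^k$ has no star cutset for every $k\ge 3$, not merely for the fixed one. Then show that a $2$-join of $G_1^k$ would pull back to a $2$-join of $G$ with a side strictly inside $X_1$: the marker path $P^2$ of $G_1^k$ is a flat path of length $k\ge 3$, so such a $2$-join either keeps $P^2$ on one side --- in which case substituting $X_2$ back for $P^2$ gives a $2$-join of $G$ whose $X_1$-side has lost at least one marker vertex, hence lies strictly inside $X_1$ --- or it splits $P^2$, a configuration excluded by reasoning as in~(iii). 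Either way minimality of $X_1$ is contradicted, so $G_1^k$ has no $2$-join for any $k\ge 3$, i.e.\ $X_1$ is an extreme side; this is Lemma~4.4 of~\cite{nicolas.kristina:two}.

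For~(iii), write $P=p_1\cdots p_\ell$. By~(i) every vertex of $A_2\cup B_2$ has degree at least~$3$, hence is not interior to the flat path $P$; so any interior vertex of $P$ that lies in $X_2$ lies in $C_2$, which is anticomplete to $X_1$, and therefore has both its neighbours on $P$ again in $X_2$; propagating along $P$ this would put all of $P$ in $X_2$, contradicting $P\cap X_1\ne\emptyset$. Hence no interior vertex of $P$ lies in $X_2$, i.e.\ $\{\,i:p_i\in X_2\,\}\subseteq\{1,\ell\}$, and it is nonempty. If it equals $\{1\}$ then $u:=p_1\in X_2$ while $p_2\in X_1$, so the edge $up_2$ crosses the $2$-join and $u\in A_2\cup B_2$: this is alternative~(a) (and $\{\ell\}$ is symmetric). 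If it equals $\{1,\ell\}$ then, with $u:=p_1$ and $v:=p_\ell$, we have $u,v\in A_2\cup B_2$ and $P\sm\{u,v\}\subseteq X_1$. To reach~(b) one must still exclude $u,v$ both in $A_2$ (resp.\ both in $B_2$): if $u\in A_2$, its neighbour $p_2$ on $P$ is interior to $P$ (as $\ell\ge 4$), so $\deg_G(p_2)=2$, while $p_2\in X_1$ with $p_2\sim u\in A_2$ forces $p_2\in A_1$; since $p_2$ is complete to $A_2$ we get $A_2\subseteq N(p_2)=\{p_1,p_3\}$, and since $p_3\in X_1$ this yields $A_2=\{u\}$, whence $v\notin A_2$ and so $v\in B_2$; symmetrically $B_2=\{v\}$ and the neighbours of $u$, $v$ on $P$ lie in $A_1$, $B_1$ respectively. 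Then $P\sm\{u,v\}$ is a path in $G[X_1]$ from $A_1$ to $B_1$ whose interior lies in $C_1$, and it remains only to show that $G[X_1]$ has exactly two connected components, the other one of the same form --- this last point is where the no-star-cutset hypothesis is used in full, together with $|A_1|,|B_1|\ge 2$ and the degree-$2$ constraints on the interior of $P$, and it is carried out as in Lemma~4.4 of~\cite{nicolas.kristina:two}; since that argument uses no property of the flat path $P$ beyond its having length at least~$3$ with interior vertices of degree~$2$, the change of definition of flat path is immaterial. The step I expect to be the main obstacle is precisely this last one in~(iii): extracting from the two degree-$2$ constraints both that $A_2$ and $B_2$ are singletons and that $G[X_1]$ breaks into exactly two paths from $A_1$ to $B_1$ with interior in $C_1$; by contrast, items~(i) and~(ii) are comparatively routine bookkeeping around minimality of the $2$-join side and the pull-back of $2$-joins through long marker paths.
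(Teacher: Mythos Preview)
Your proposal is correct and aligns with the paper's treatment: the paper does not give a self-contained proof of this lemma at all, but simply records that it ``can be extracted from Lemmas 3.2, 4.2, 4.3 and 4.4 of~\cite{nicolas.kristina:two}''. Your write-up goes further than the paper by actually sketching the arguments behind those cited lemmas, which is fine and matches the intended provenance (Lemma~3.2 for~(i), Lemmas~4.2--4.4 for~(ii) and~(iii)).

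One small imprecision worth tightening: in the degree-$3$ claim of~(i) you invoke ``consistency properties of the $2$-join'' to get a neighbour of $v\in A_2$ inside $X_2$, but consistency (Lemma~\ref{l:consistent}) is stated in this paper only for graphs in $\mathcal D$, whereas Lemma~\ref{extreme} assumes merely no star cutset. The argument in~\cite{nicolas.kristina:two} obtains that extra neighbour directly from the no-star-cutset hypothesis (if $v$ had all its neighbours in $A_1$, one builds a star cutset separating $v$), so you should point there rather than to consistency. Otherwise your sketches for~(ii) and~(iii) --- in particular the propagation argument showing no interior vertex of $P$ lies in $X_2$, and the deduction that $|A_2|=|B_2|=1$ in case~(b) --- are sound and faithful to the source.
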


\begin{remark}\label{remark1}
 We will be applying Lemma \ref{extreme} only to graphs $G\in \mathcal D$, and so by Lemmas \ref{Star=Clique} and
\ref{new2}, all of $G,G_1,G_2$ have no star cutsets.
Furthermore, for $G\in\mathcal D$, by Lemma \ref{l:consistent}, every 2-join is consistent, so outcome (b) of (iii) of
Lemma \ref{extreme}  is not possible (i.e.\ (a) is the only possible outcome of Lemma \ref{extreme}(iii)).
\end{remark}

%
%
%
%
In \cite{nicolas.kristina:two} it is shown that one can decompose a graph with no star cutset
using a sequence of `non-crossing' 2-joins into graphs with no star cutset and no 2-join
(which will in our case be basic).   In this paper we will use minimally-sided 2-joins (as opposed to \cite{nicolas.kristina:two} and \cite{twf-p3}, where minimally-sided 2-joins were 'moved' to allow marker paths to be disjoint). This will be particularly important when solving the induced paths problem. We now describe this 2-join decomposition.
%

\subsubsection*{2-Join decomposition tree $T_G$ of depth $p\geq 1$ of a graph $G$ that has no star cutset and has a 2-join}
\begin{itemize}
\item[(i)] The root of $T_G$ is $G^0=G$.
\item[(ii)] The non-leaf nodes of $T_G$ are $G^0,G^1,\ldots,G^{p-1}$. Each non-leaf node $G^i$ has two children: one is $G^{i+1}$ and the other one is $G^{i+1}_{B}$.

    The leaf-nodes of $T_G$ are graphs $G^1_B,G^2_B,\ldots,G^p_{B}$ and $G^p$.
    Graphs $G^1_B,G^2_B,\ldots,G^p_B,G^p$ have no star cutset nor 2-join.

  \item[(iii)] For $i\in\{0,1,\ldots,p-1\}$, $G^i$ has a 2-join $(X_1^i,X_2^i)$ that is minimally-sided with minimal side $X_1^i$.
  We denote the split of this 2-join by $(X_1^i,X_2^i,A_1^i,A_2^i,B_1^i,B_2^i)$.
  Graphs $G^{i+1}$ and $G_B^{i+1}$ are blocks of decomposition of $G^i$ w.r.t.\ $(X_1^i,X_2^i)$ whose marker paths are of length 5.
  The block $G_B^{i+1}$ corresponds to the minimal side $X_1^i$, i.e.\ $X_1^i\subseteq V(G_B^{i+1})$. We denote with
  $P^{i+1}$ the marker path used to build $G^{i+1}$.
\end{itemize}


\begin{lemma}\label{DT-construction}
There is an algorithm with the following specification.
\begin{description}
\item[ Input:] A graph $G\in\mathcal D$ that has a 2-join.
\item[ Output:]
A 2-join decomposition tree $T_G$ of depth at most $n$, such that all graphs that correspond to the nodes of
$T_G$ belong to $\mathcal D$, and all graphs that correspond to the leaves of $T_G$ (i.e. $G^1_B, \ldots ,G^p_B,G^p$)
belong to $\mathcal B$.
\item[ Running time:] $\mathcal O(n^4m)$.
\end{description}
\end{lemma}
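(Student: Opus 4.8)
The plan is to build the tree $T_G$ recursively, following exactly the description given just before the statement. Start with $G^0 = G$. At a generic non-leaf node $G^i \in \mathcal D$ that still has a 2-join, we must (a) find a minimally-sided 2-join of $G^i$ with split $(X_1^i, X_2^i, A_1^i, A_2^i, B_1^i, B_2^i)$, $X_1^i$ being a minimal side; (b) form the two blocks of decomposition $G^{i+1}$ and $G^{i+1}_B$ with marker paths of length $5$; (c) recurse on $G^{i+1}$, declaring $G^{i+1}_B$ a leaf. We stop along a branch when the current block has no 2-join (it is then a leaf $G^j_B$, or the final $G^p$). The key facts I would invoke, all stated earlier, are: Lemma~\ref{new2} and Remark~\ref{remark1} to guarantee every block stays in $\mathcal D$ (in particular has no star cutset, hence no clique cutset); Theorem~\ref{decomposeTW} (really its corollary) together with this to conclude that a block in $\mathcal D$ with no 2-join must lie in $\mathcal B$, which is exactly the required property of the leaves; and the existence of minimally-sided 2-joins in star-cutset-free graphs (cited to \cite{nicolas.kristina:two}) so that step (a) can always be carried out while a 2-join exists.

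For the depth bound I would argue that each decomposition step strictly decreases a suitable parameter. The block $G^{i+1}_B$ is a leaf, so the only branch that continues is through $G^{i+1}$, which is obtained by replacing $X_2^i$ (of size $\ge 3$) by a marker path of length $5$ (i.e.\ $4$ internal plus $2$ endpoints $= 6$ vertices, but the endpoints are ``new''). The cleanest monotone quantity is $|X_1^i|$ or, more simply, the number of vertices of $G$ that survive into $G^{i+1}$: since by Lemma~\ref{extreme}(i) (with Remark~\ref{remark1}) we have $|A_1^i| \ge 2$ and $|B_1^i| \ge 2$, one checks that $X_1^i$ together with the marker path still has at least one fewer ``original'' vertex than $G^i$, or alternatively that the side $X_1^i$ that is preserved is a proper subset of $V(G^i)$ minus at least one vertex; hence after at most $n$ steps no 2-join remains along the main branch. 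I would spell this out carefully: the depth $p$ satisfies $p \le n$, giving a tree with $\mathcal O(n)$ nodes (each internal node has one leaf child and one non-leaf child, plus the final leaf $G^p$), and each $G^i$ has at most $n + \mathcal O(p) = \mathcal O(n)$ vertices since marker paths have bounded length.

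For the running time I would bound the cost of one step and multiply by the depth. At node $G^i$ we need to find a minimally-sided 2-join; a 2-join of a graph on $N$ vertices can be found in time $\mathcal O(N^4 m)$ (this is the known bound from the 2-join detection literature used in \cite{nicolas.kristina:two}), and searching for a \emph{minimally-sided} one fits within the same order by the techniques there (iterating over candidate sides / testing minimality). Constructing the two blocks with marker paths of length $5$ is linear. Since each $G^i$ has $\mathcal O(n)$ vertices and $\mathcal O(m + n)$ edges, one step costs $\mathcal O(n^4 m)$, and with $\mathcal O(n)$ nodes the naive total would be $\mathcal O(n^5 m)$ — so to hit the claimed $\mathcal O(n^4 m)$ I would instead charge the work more carefully: the sides $X_1^i$ along the tree are ``essentially disjoint'' (each original vertex of $G$ is interior to at most one minimal side over the whole recursion, because the minimal side $X_1^i$ is peeled off and replaced by a short marker path), so $\sum_i |X_1^i| = \mathcal O(n)$, and the dominant search at step $i$ can be made to cost $\mathcal O(|X_1^i| \cdot n^3 m)$ or similar, summing to $\mathcal O(n^4 m)$.

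The main obstacle I expect is precisely this last amortization: getting the total running time down to $\mathcal O(n^4 m)$ rather than $\mathcal O(n^5 m)$ requires a genuine argument that the minimally-sided 2-join search at node $G^i$ does not need to re-examine the whole graph from scratch, but can be localized to (or charged against) the minimal side being removed. This is where I would lean hardest on the machinery of \cite{nicolas.kristina:two} — in particular on the fact that minimally-sided 2-joins of the blocks are closely related to minimally-sided 2-joins of $G$ (non-crossing 2-joins), so the sides encountered form a laminar-like family whose total size is linear. If that localization cannot be pushed through cleanly, a fallback is to observe that the blocks $G^{i+1}$ shrink geometrically in the number of original vertices only in favourable cases, so one would need the sharper ``essentially disjoint sides'' bound rather than a crude potential argument. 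The correctness part (leaves in $\mathcal B$, all nodes in $\mathcal D$, depth $\le n$) is, by contrast, a routine induction once Lemmas~\ref{new2}, \ref{extreme} and Remark~\ref{remark1} are in hand.
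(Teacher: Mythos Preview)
Your correctness outline (blocks remain in $\mathcal D$ via Lemma~\ref{new2}; leaves lie in $\mathcal B$ via Lemma~\ref{extreme}(ii) together with Theorem~\ref{decomposeTW}) matches the paper. The problems are in the quantitative parts.

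\textbf{Running time.} You assume that finding a minimally-sided 2-join at one node costs $\mathcal O(n^4m)$, which forces you into an amortization argument over ``essentially disjoint'' minimal sides. That amortization is neither proved nor obviously true (marker-path vertices recirculate, so the sides $X_1^i$ are not disjoint subsets of $V(G)$). The paper avoids all of this: it invokes the algorithm of~\cite{fast2j}, which finds a minimally-sided 2-join in $\mathcal O(n^3m)$ time. With at most $n$ iterations this gives $\mathcal O(n^4m)$ directly, no amortization needed. Your per-step bound is simply off by a factor of~$n$.

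\textbf{Depth bound.} Your argument that each step removes at least one ``original'' vertex of $G$ is not justified. First, you have the sides reversed: $G^{i+1}$ is built from $X_2^i$ by replacing the \emph{minimal} side $X_1^i$ with the marker path, not the other way around. Second, even with the indices fixed, $X_1^i$ may contain internal vertices of earlier marker paths (Lemma~\ref{extreme}(iii)(a) allows almost an entire earlier marker path to sit inside $X_1^i$), so you cannot conclude that an original vertex of $G$ is lost at every step without further work. The paper does not attempt a direct counting argument here; it cites Lemma~8.1 of~\cite{nicolas.kristina:two}, noting that the proof there goes through because the 2-joins used are extreme and consistent. If you want a self-contained argument you would need to reproduce that lemma, which is where the real content of the depth bound lives.
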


\begin{proof}
Let $G^0=G$. Suppose that a decomposition tree of depth $i\geq 0$ of $G$ has been constructed. By Lemmas \ref{Star=Clique} and \ref{new2} graph $G^i$ belongs to $\mathcal D$ and has no star cutset. We apply on $G^i$ the algorithm from \cite{fast2j} that finds  a minimally-sided 2-join (the running time of this algorithm is $\mathcal O(n^3m)$). If no 2-join is found, then $i=p$, $G^i$ is basic (by Theorem \ref{decomposeTW}) and we stop the algorithm. If a 2-join $(X_1^i,X_2^i)$ is found, then we build graphs $G^{i+1}$ and $G_B^{i+1}$ as in the definition of a 2-join decomposition tree. Note that, by Lemmas \ref{Star=Clique}, \ref{new2} and \ref{extreme} and Theorem \ref{decomposeTW}, $G_B^{i+1}\in\mathcal B$ and $G^{i+1}$ does not have a star cutset. Hence, $(X_1^i,X_2^i)$ is an extreme 2-join of $G^i$ (by Lemma \ref{extreme}).

Let $T_G$ be the 2-join decomposition tree that is obtained using the algorithm we described. Note that every 2-join used to construct $T_G$ is in fact extreme and by Lemma \ref{l:consistent} consistent. So, the conclusion of Lemma 8.1 from \cite{nicolas.kristina:two} holds, i.e., the depth of $T_G$ is at most $n$ (Lemma 8.1 from \cite{nicolas.kristina:two} is formulated for a different graph class and the marker paths used there have length 3 or 4, but in our case we can derive almost identical proof; the only step there that is specific to that situation can be  obtained in our case using the fact that the 2-joins that we use are consistent). Hence, the running time of our algorithm is $\mathcal O(n^4m)$.
\end{proof}

\section{Induced cycles}\label{sec:hole}

In this section we consider an instance $(G,\mathcal V)$ of the $k$-\textsc{in-a-Cycle} problem on $\mathcal C$, that is a graph $G\in\mathcal C$ and a set $\mathcal V=\{v_1,\ldots,v_k\}$ of $k$ distinct vertices of $G$. The problem is to decide whether there exists a chordless cycle that contains all vertices of $\mathcal V$.

\subsection*{2-\textsc{in-a-Cycle}}

The 2-\textsc{in-a-Cycle} problem is to decide whether a graph $G$ contains a chordless cycle through two specified vertices $u$ and $v$ of $G$. By Bienstock's construction, this problem is NP-complete for general graphs (see \cite{bienstock}). In this section we prove that for $G\in\mathcal C$ the only obstructions for the existence of such a cycle are  clique cutsets that separate $u$ and $v$. This leads to an algorithm of running time $\mathcal O(nm)$ for 2-\textsc{in-a-Cycle} for $\mathcal C$.

An additional property of P-graphs that we need in the proof of Theorem \ref{2inAhole} below is the following:
a P-graph is a graph
$G\in \mathcal B$ with skeleton $R$ and special clique $K$ such that $K\neq \emptyset$ and each vertex of $L(R)$
that corresponds to an edge of $R$ incident with a degree 1 vertex (in $R$), has a neighbor in $K$.
We also need the following result from \cite{twf-p2}.

\begin{lemma}[Lemma 3.4 in \cite{twf-p2}]\label{holeInP}
Let $R$ be a skeleton of a P-graph. If $e_1$ and $e_2$ are edges of $R$, then there exists a cycle of $R$ that goes through $e_1$ and $e_2$, or there exists a path in $R$ whose endnodes are of degree 1 (in $R$) and that goes through $e_1$ and $e_2$.
\end{lemma}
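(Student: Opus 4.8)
The plan is to derive the statement from the classical fact that in a $2$-connected graph any two edges lie on a common cycle, applied not to $R$ itself but to a small augmentation. Let $R^{+}$ be the graph obtained from $R$ by adding one new vertex $z$ adjacent to exactly the vertices of degree $1$ in $R$. The step carrying all the weight is to read off from the definition of a P-graph in~\cite{twf-p2} the following property of its skeleton: \emph{either $R$ is $2$-connected, or $R^{+}$ is $2$-connected}. Informally this says that $R$ fails to be $2$-connected only by carrying pendant paths, and that these hang off $R$ at pairwise distinct vertices and number at least two; this should be forced because in a P-graph the vertices of $L(R)$ arising from pendant edges of $R$ are all gathered together by the special clique $K$, which is nonempty and (by the extra property recalled just before the lemma) adjacent to each of them, while every vertex of $K$ has at most one neighbour of degree $\geq 2$ in $L(R)$. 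The degenerate cases where $R$ is a single edge or a path fall under the second alternative, since then $R^{+}$ is a cycle. Establishing this dichotomy from conditions~(i)--(viii) of the definition of P-graphs is where the real checking lies, and I expect it to be the main obstacle: one must return to that definition and rule out any unexpected cut vertex.

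Granting the dichotomy, the rest is short. Suppose first that $R$ is $2$-connected. Subdividing $e_1$ and $e_2$ preserves $2$-connectivity, so the subdivided graph has a cycle through the two subdivision vertices; as these have degree $2$, that cycle runs through both subdivided edges and hence gives a cycle of $R$ through $e_1$ and $e_2$ (and if $e_1=e_2$ one only needs a cycle through $e_1$, which again exists by $2$-connectivity). This is the first conclusion. Suppose next that $R^{+}$ is $2$-connected; by the same reasoning $e_1$ and $e_2$ lie on a common cycle $C$ of $R^{+}$. If $z\notin V(C)$ then $C$ is a cycle of $R$ through $e_1$ and $e_2$, again the first conclusion. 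If $z\in V(C)$, then $C$ meets $z$ along two edges $z\ell_1$, $z\ell_2$ with $\ell_1\neq\ell_2$ and $\ell_1,\ell_2$ of degree $1$ in $R$; deleting $z$ from $C$ leaves a path of $R$ from $\ell_1$ to $\ell_2$, and this path still passes through $e_1$ and $e_2$ since $e_1,e_2$ are edges of $R$ whereas $z\ell_1,z\ell_2$ are not. This gives the second conclusion, which finishes the argument. Note that this last part never uses that $R$ is triangle-free, only its connectivity structure, so the whole difficulty is concentrated in the structural dichotomy.
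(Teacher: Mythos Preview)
This paper does not prove the lemma; it is quoted from~\cite{twf-p2} (Lemma~3.4 there) and used as a black box, so there is no proof here to compare against.

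As for your proposal: the reduction to $2$-connectivity via the augmented graph $R^{+}$ is clean, and everything you write after the dichotomy is correct. But the dichotomy \emph{is} the lemma, and you have not established it---you openly defer it to conditions~(i)--(viii) in~\cite{twf-p2}. The properties of $\mathcal B$ recorded in the present paper are explicitly only a partial list; they do not on their own rule out a cut vertex of $R$ lying on no pendant path, nor do they guarantee that a non-$2$-connected $R$ has at least two degree-$1$ vertices (and if it has zero or one, $R^{+}$ fails to be $2$-connected as well, since $z$ is then isolated or pendant). So what you have written is a correct reduction of the lemma to a structural claim about P-graph skeletons, together with a pointer to the place where that claim must be checked; it is not yet an independent proof. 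Whether your dichotomy is precisely what Lemma~3.4 of~\cite{twf-p2} proves, or whether that lemma is argued along a different route, cannot be determined from the present paper.
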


\begin{theorem}\label{2inAhole}
Let $G$ be a graph of $\mathcal C$, and $u$ and $v$ be non-adjacent vertices of $G$. Then there exists a hole of $G$ that contains both $u$ and $v$, or $G$ admits a clique cutset that separates $u$ and $v$.
\end{theorem}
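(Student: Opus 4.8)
The plan is to induct on $|V(G)|$ and exploit the decomposition theorem (Theorem~\ref{decomposeTW}), treating the basic graphs as base cases and the two cutsets as inductive steps. First I would dispose of the easy reductions. If $G$ is disconnected, then either $u,v$ lie in different components (and the empty set is a clique cutset separating them, as noted after the definition of cutset) or they lie in the same component and we recurse on that component. So we may assume $G$ is connected. By Theorem~\ref{decomposeTW}, $G\in\mathcal B$, or $G$ has a clique cutset, or $G$ has a 2-join.

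\medskip
\noindent\textbf{Clique cutset case.} Suppose $G$ has a clique cutset $K$ with split $(A,K,B)$. If $u$ and $v$ are separated by $K$ (one in $A$, one in $B$, or one of them in $K$ with the other strictly on one side and $K$ still separating — more carefully, if no component of $G\sm K$ contains both $u$ and $v$), then we are done: $K$ is the desired clique cutset. Otherwise $u$ and $v$ lie in (the closure by $K$ of) a single side, say $u,v\in A\cup K$; note $u,v$ non-adjacent forces them not both in $K$. Apply induction to the block $G_A=G[A\cup K]$, which is smaller since $B\neq\emptyset$. If $G_A$ has a hole through $u$ and $v$, this hole is also an induced subgraph of $G$ (blocks are induced subgraphs), and we are done. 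If instead $G_A$ has a clique cutset $K'$ separating $u$ and $v$, I would argue that $K'$ (or a clique contained in $K'\cup K$, using that $K$ is complete to $N(K)\cap A$ in a controlled way) is still a clique cutset of $G$ separating $u$ and $v$; the standard fact here is that a clique cutset of a block of a clique-cutset decomposition lifts to a clique cutset of the original graph — this is essentially the content of Theorem~\ref{th:tarjan}'s decomposition and is where I would be slightly careful.

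\medskip
\noindent\textbf{2-join case.} This is the main obstacle. Suppose $G$ has a 2-join with split $(X_1,X_2,A_1,A_2,B_1,B_2)$ and no clique cutset (so $G\in\mathcal D$ and by Lemma~\ref{l:consistent} the 2-join is consistent, and by Lemma~\ref{Star=Clique} $G$ has no star cutset). Take blocks $G_1,G_2$ with marker paths of length, say, $5$ (or whatever is convenient — I want the marker path long enough that an induced cycle through it, when contracted, certifies a connecting path of the right parity/length on the other side). By Lemma~\ref{new2}, $G_1,G_2\in\mathcal D$, so in particular they have no clique cutset, hence by induction each block either has a hole through any prescribed pair of non-adjacent vertices, or — impossible, since $\mathcal D$-graphs have no clique cutset. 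So in any block, any two non-adjacent vertices lie on a common hole. The real work is bookkeeping on where $u$ and $v$ sit relative to $X_1,X_2$:
\begin{itemize}
\item If $u,v$ are both in $X_1$ (the symmetric case is identical), put $u,v$ into $G_1$ together with the marker path replacing $X_2$. Find a hole $H_1$ of $G_1$ through $u$ and $v$. If $H_1$ avoids the interior of the marker path, then $H_1\subseteq G[X_1]$ and we are done. If $H_1$ uses the marker path, it enters via $A_1$ and leaves via $B_1$ (the marker path's ends $a_2,b_2$ are complete to $A_1,B_1$ respectively and the interior is anticomplete to $X_1$), so $H_1$ consists of the marker path together with a chordless path $P_1$ from $A_1$ to $B_1$ inside $X_1$. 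Now I need to route back through $X_2$: I must find a chordless path $P_2$ from $A_2$ to $B_2$ in $X_2$ such that $P_1\cup P_2$ together with the connecting edges forms a hole of $G$ (no unwanted chords between $P_1$ and $P_2$ beyond the allowed $A_1A_2$ and $B_1B_2$ edges, which is automatic since there are no other adjacencies between $X_1$ and $X_2$; and the cycle must have length $\geq 4$, which holds as $|P_1|$ plus $|P_2|$ plus $2$ is large). The existence of such a $P_2$ follows from consistency: by statements (vii)/(viii) there is a path in $X_2$ from any vertex of $A_2$ to $B_2$, and one extracts a chordless one; alternatively, replace $X_1$ by a marker path in $G_2$, apply induction to $G_2$ to get a hole using that marker path, yielding $P_2$. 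Then $P_1$, $P_2$ and the four connecting edges (one end of $P_1$ to one end of $P_2$ in $A_1A_2$, the other ends in $B_1B_2$) form a hole of $G$ through $u,v$.
\item If $u\in X_1$ and $v\in X_2$, I would first guess which of the four ``sides'' the desired hole uses to cross from $X_1$ to $X_2$; the hole crosses exactly twice, once via $A_1A_2$ and once via $B_1B_2$ (it cannot cross only once). So the hole is $P_1\cup P_2$ plus connecting edges where $P_1$ is a chordless $A_1$–$B_1$ path through $u$ in $X_1$ and $P_2$ is a chordless $A_2$–$B_2$ path through $v$ in $X_2$. To get $P_1$: build $G_1$ with the length-$5$ marker path for $X_2$, and ask induction for a hole of $G_1$ through $u$ and (say) an interior vertex of the marker path; such a hole must traverse the whole marker path and hence restricts to a chordless $A_1$–$B_1$ path through $u$ in $X_1$. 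If instead induction returns a clique cutset of $G_1$, that contradicts $G_1\in\mathcal D$. Symmetrically get $P_2$ in $X_2$. Glue.
\item If $u\in X_1$ and $v\in K_1\cup C_1$ type placements, or $v$ equals one of the special vertices, these are covered by the two cases above after noting $A_i,B_i,C_i$ partition $X_i$.
\end{itemize}
The delicate points I expect to fight with: (1) ensuring the pieces $P_i$ are \emph{chordless} and meet the special sets in exactly one vertex each (so the glued cycle is genuinely chordless); this uses that within a block a minimal hole through the marker path restricts nicely, together with consistency properties (ii),(iii) preventing short-circuits between $A_i$ and $B_i$; (2) handling the case where a desired $P_i$ would have an endnode in $A_i\cap B_i$ — but consistency (ii),(iii) say every node of $A_i$ has a non-neighbour in $B_i$, which together with the clique structure of $A_i,B_i$ from (iv),(v) keeps things in check; (3) the length-$\geq 4$ requirement, which is never a problem once a marker path of length $5$ is involved on at least one side.

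\medskip
\noindent\textbf{Basic case.} Finally, suppose $G\in\mathcal B$ with skeleton $R$ and special clique $K$. Here I want to show: any two non-adjacent $u,v$ lie on a common hole, or $G$ has a clique cutset separating them. Recall holes of $L(R)$ correspond to cycles of $R$, and a hole of a general $G\in\mathcal B$ is a hole of $L(R')$ for the subgraph $R'$ obtained after absorbing the relevant $K$-vertices — using the listed properties, each $K$-vertex of degree $\geq 2$ in $G$ lies in no triangle-free-forced position, and centers of claws are in $K$. For the P-graph refinement I would invoke Lemma~\ref{holeInP}: translating $u,v$ to edges $e_u,e_v$ of $R$ (or to $K$-vertices), either there is a cycle of $R$ through $e_u,e_v$ — giving a hole of $G$ through $u,v$ — or a degree-$1$-to-degree-$1$ path of $R$ through them, in which case, using the extra P-graph property that pendant edges of $R$ have a neighbour in $K$ and $K$ is a clique, I close this path up through $K$ into a hole, or else find that the separation forced by such a path is realized by a clique cutset (a subset of $K$ plus at most the degree-$1$ incident edge-vertices, which is a clique by the basic-class adjacency rules) separating $u$ from $v$. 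For $G$ a line graph of a triangle-free chordless graph ($K=\emptyset$), it's purely the cycle-vs-no-cycle dichotomy in $R$, and ``no cycle through $e_u,e_v$'' means some edge or vertex of $R$ separates them, which translates to a clique cutset of $L(R)$ (the star of a vertex of $R$ is a clique in $L(R)$). This basic case, especially reconciling the P-graph structure with clique cutsets, is the second place where I expect the argument to require the most care.
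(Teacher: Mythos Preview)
Your proposal is correct and follows essentially the same induction-via-decomposition route as the paper's proof. The paper is slightly more direct in the basic cases: for line graphs it invokes Menger's theorem on the root graph $R$ (two disjoint paths between the endpoints of $e_u,e_v$ give the hole, a separating vertex of $R$ gives the clique cutset), and for P-graphs it observes that Lemma~\ref{holeInP} together with the pendant-edges-see-$K$ property \emph{always} yields a hole, so the clique-cutset alternative you leave open there never arises.
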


\begin{proof}
Our proof is by induction on $|V(G)|$. By Theorem \ref{decomposeTW}, it is enough to examine the following cases.

\medskip
\noindent{\bf Case 1.} $G$ is a line graph of a triangle-free chordless graph.

\medskip
\noindent Let $R$ be the root graph of $G$, and let $e=x_1x_2$ and
$f=y_1y_2$ be the edges of $R$ that correspond to $u$ and $v$ (note
that $\{x_1,x_2\}\cap\{y_1,y_2\} = \emptyset$). By Menger's theorem, in $R$ there exist two vertex disjoint paths $P$ and $Q$ from $\{x_1,x_2\}$ to $\{y_1,y_2\}$, or there is a vertex $z$ that separates $\{x_1,x_2\}$ from $\{y_1,y_2\}$. In the first case $P\cup Q$ is a hole of $R$ (since $R$ is triangle-free and chordless), and hence $L(P\cup Q)$ is a hole of $G$ that contains $u$ and $v$. In the second case the set $\{zz'\,:\,z'\in N_R(z)\}\setminus\{x_1x_2,y_1y_2\}$ correspond to a clique cutset of $G$ that separates $u$ and $v$.

\medskip
\noindent{\bf Case 2.} $G$ is a P-graph.

\medskip
\noindent Let $R$ be the skeleton and $K$ the special clique of $G$. First, let us consider the case when both $u$ and $v$ are in $L(R)$.  We apply Lemma \ref{holeInP} to edges $e_1$ and $e_2$ that in $R$ correspond to $u$ and $v$. If a hole $H$ is obtained, then $L(H)$ is a hole of $G$ that contains $u$ and $v$. If a path $P$ is obtained, then $L(P)$ together with neighbor(s) in $K$ of its endnodes induces a hole in $G$ that contains $u$ and $v$.

So, we may assume that $u\in K$ and let $u'\in L(R)$ be a neighbor of
$u$. Then $v\in L(R)$. Since $u'$ is of degree 1 in $L(R)$, by Lemma
\ref{holeInP}, there exists a chordless path $Q$ in $L(R)$ that
contains both $u'$ and $v$, and such that its other endnode $w'$
($w'\neq u'$) is of degree 1 in $L(R)$. Let $w$ be the neighbor of
$w'$ in $K$. Then $V(Q)\cup\{u,w\}$ induces a desired hole in $G$.

\medskip
\noindent{\bf Case 3.} $G$ admits a clique cutset.

\medskip
\noindent Let $(A,K,B)$ be a split of this cutset. If $u$ and $v$ are separated by this cutset, we are done, so we may assume that $u,v\in V(G_A)$. Then, by induction, there exists a hole $H$ in $G_A$ that contains both $u$ and $v$, or a clique cutset $K'$ of $G_A$ that separates $u$ and $v$. In the first case $H$ is a hole of $G$ that contains both $u$ and $v$, and in the second $K'$ is a clique cutset of $G$ that separates $u$ and $v$.

\medskip
\noindent{\bf Case 4.} $G$ admits a 2-join.

\medskip
\noindent Let $(X_1,X_2,A_1,A_2,B_1,B_2)$ be a 2-join of $G$. By Case 3, we may assume that $G$ does not admit a clique cutset, and hence, by Lemma \ref{l:consistent}, that $(X_1,X_2)$ is a consistent 2-join. So, by property (vi) of consistent 2-joins, for $i\in\{1,2\}$ there is a chordless path $Q^i$ in $G[X_i]$ that has exactly one vertex from both  $A_i$ and $B_i$. Also, by Lemma \ref{new2}, blocks of decomposition $G_1$ and $G_2$ belong to $\mathcal D$.

First, let us assume that $u,v\in X_i$, for some $i\in\{1,2\}$. Then, by induction, there is a hole $H$ in $G_i$ that contains $u$ and $v$. If $H$ is contained in $G[X_i]$, then we are done. Otherwise $H$ contains the marker path $P^{3-i}$, and hence to obtain a desired hole it is enough to replace $P^{3-i}$ with $Q^{3-i}$ in $H$.

So, we may assume that $u\in X_1$ and $v\in X_2$. By induction  there exists a hole $H_1$ (resp.\ $H_2$) in $G_1$ (resp.\ $G_2$) that contains $u$ and $c_2$ (resp.\ $v$ and $c_1$), where $c_i$ is an internal vertex of $P^i$, for $i\in\{1,2\}$. Let $R^1$ (resp.\ $R^2$) be the path obtained from $H_1$ (resp.\ $H_2$) by removing the vertices of the marker path $P^2$ (resp.\ $P^1$). Then $V(R^1)\cup V(R^2)$ induces a hole in $G$ that contains both $u$ and $v$.
\end{proof}

\begin{theorem}
There is an algorithm with the following specifications:
\begin{description}
\item[ Input:] A graph $G\in\mathcal C$ and two vertices $u$ and $v$ of $G$.
\item[ Output:]
YES if there is a chordless cycle of $G$ that contains both $u$ and $v$, and NO otherwise.
\item[ Running time:] $\mathcal O(nm)$.
\end{description}
\end{theorem}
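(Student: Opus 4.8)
The plan is to split on whether $u$ and $v$ are adjacent, letting Theorem~\ref{2inAhole} carry the structural content in the non-adjacent case and reducing the adjacent case to a plain connectivity query. Suppose first that $u$ and $v$ are adjacent. Then every cycle of $G$ through $u$ and $v$ uses the edge $uv$ (otherwise $uv$ would be a chord of it), so I claim that $G$ has a chordless cycle through $u$ and $v$ if and only if $u$ and $v$ lie in the same connected component of the graph $G-uv$ obtained by deleting the edge $uv$; this part uses no property of $\mathcal C$. Indeed, deleting $uv$ from such a chordless cycle leaves a $u$--$v$ path of $G-uv$; conversely, a shortest $u$--$v$ path $P$ of $G-uv$ is chordless and has length at least $2$ since $uv\notin E(G-uv)$, so $V(P)$ induces in $G$ exactly $P$ together with the edge $uv$, which is a chordless cycle (a triangle if $P$ has length $2$, a hole otherwise) through $u$ and $v$. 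Hence when $uv\in E(G)$ the algorithm just runs a graph search in $G-uv$, in time $\mathcal O(n+m)$.

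Suppose now that $u$ and $v$ are non-adjacent. Then any chordless cycle through $u$ and $v$ has length at least $4$, hence is a hole, so by Theorem~\ref{2inAhole} the answer is YES if and only if $G$ has no clique cutset separating $u$ from $v$ --- the two alternatives of that theorem being mutually exclusive, since a clique cutset $K$ with $u,v\notin K$ meets a hole $H$ through $u$ and $v$ in at most two vertices (any three vertices of $H$ contain two non-adjacent ones), and if it meets $H$ in two vertices then these lie on distinct $u$--$v$ arcs of $H$ (otherwise the other arc would join $u$ to $v$ in $G\setminus K$), hence are non-adjacent on $H$ but adjacent in $G$, a chord of $H$, which is impossible. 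To decide this I would compute a clique cutset decomposition tree $T$ of $G$ in time $\mathcal O(nm)$ (Theorem~\ref{th:tarjan}) and then test whether $u$ and $v$ occur together in some leaf of $T$, returning YES exactly when they do. One direction of correctness is immediate: if $u$ and $v$ lie in a common leaf $L$, then $L$ has no clique cutset, hence is connected, hence no clique avoiding $u$ and $v$ disconnects $L$, and so none separates $u$ from $v$ in $G$. For the converse one invokes the standard behaviour of clique cutset decompositions (from~\cite{tarjan}), together with the remark recorded after Theorem~\ref{th:tarjan} that every internal cutset $K_i$ of $T$ is itself a clique cutset of $G$: if $u$ and $v$ never end up in a common leaf then, at the first spine node $G_i$ of $T$ at which they are forced to opposite sides of the split $(A_i,K_i,B_i)$, the clique $K_i$ separates them in $G$. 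As $T$ has $\mathcal O(n)$ leaves of size $\mathcal O(n)$ each, this check costs $\mathcal O(n^2)$, so the overall running time is $\mathcal O(nm)$, dominated by building $T$.

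The step I expect to need the most care is this last converse direction, namely verifying from the structure of the decomposition tree that two vertices not sharing a leaf are genuinely separated by one of the cliques $K_i$ inside $G$ (and not merely inside the corresponding block). Everything else is either supplied directly by Theorem~\ref{2inAhole} or is the short self-contained argument for the adjacent case.
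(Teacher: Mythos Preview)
Your proposal is correct and follows essentially the same approach as the paper: handle the adjacent case by a connectivity test in $G-uv$, and in the non-adjacent case build a clique cutset decomposition tree and return YES precisely when some leaf contains both $u$ and $v$, appealing to Theorem~\ref{2inAhole}. You supply more justification than the paper does (notably the mutual exclusivity of the two outcomes of Theorem~\ref{2inAhole} and the converse direction of the leaf-sharing criterion), which is fine.
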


\begin{proof}
First, let us consider the case when $uv$ is an edge of $G$. Let $G'$ be the graph obtained from $G$ by deleting $uv$. Now, if there exists a path from $u$ to $v$ in $G'$, then return YES, and otherwise return NO.

So, we may  assume that $u$ and $v$ are not adjacent.  We build a clique cutset decomposition tree $T$ for $G$ and remember (all) leaf nodes of this tree that contain some of the vertices $u$ and $v$. By Theorem \ref{th:tarjan}, this can be done in time $\mathcal O(nm)$. If there is a leaf node of $T$ that contains both $u$ and $v$, then there does not exist a clique cutset that separates $u$ and $v$, and hence, by Theorem \ref{2inAhole}, we return YES. Otherwise, we return NO. The running time of this algorithm is $\mathcal O(nm)$.
\end{proof}

\subsection*{$k$-\textsc{in-a-Cycle}}

In this section we prove that the problem $k$-\textsc{in-a-Cycle} is fixed-parameter tractable, when parameterized by $k$, for graphs in $\mathcal C$.

We use the following result of Robertson and Seymour.

\begin{theorem}[\cite{RS:GraphMinor13}]\label{Th:RS}
The $k$-\textsc{Disjoint paths} problem is fixed-parameter tractable, when parameterized by $k$. More precisely, there is a computable function $h$, that depends only on $k$, such that the $k$-\textsc{Disjoint paths} problem can be solved in time $h(t)n^3$.
\end{theorem}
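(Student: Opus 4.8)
Since Theorem~\ref{Th:RS} is the celebrated linkage result of the Graph Minors series, it is quoted here verbatim from \cite{RS:GraphMinor13} and used only as a black box; the plan below merely indicates the shape the proof would take if one had to supply it.

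The argument is a win/win on the treewidth of the input graph $G$, built around the \emph{irrelevant vertex} method. First I would observe that the property ``$G$ contains $k$ pairwise vertex-disjoint paths joining the prescribed pairs $(s_i,t_i)$'' can be written as a single formula of monadic second-order logic whose length depends only on $k$; by Courcelle's theorem, or by an explicit dynamic programme over a tree decomposition, this makes the problem solvable in linear time whenever the treewidth of $G$ is bounded by some function $g(k)$.

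Next I would branch on $\mathrm{tw}(G)$. If $\mathrm{tw}(G)\le g(k)$, the previous paragraph already settles the instance. Otherwise, by the Excluded Grid Theorem, $G$ contains a wall $W$ that can be forced to be arbitrarily large by taking $g(k)$ large; the crucial claim is then that some vertex $v$ lying deep inside $W$ is \emph{irrelevant}, i.e.\ the instance is a yes-instance if and only if $G\setminus v$ is, because any system of $k$ disjoint linking paths can be rerouted around the centre of $W$ through its nested cycles, so no solution is destroyed by deleting $v$. One then recurses on $G\setminus v$. Each call removes a vertex, so $O(n)$ calls suffice, and locating a large wall together with an irrelevant vertex inside it costs polynomially much with exponent independent of $k$; the accounting delivers the stated running time $h(k)\,n^3$.

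The genuinely hard part — exactly what is not reproved here — is the irrelevant vertex lemma: showing that a sufficiently large flat wall guarantees a vertex whose deletion preserves every linkage. This needs the structure theorem for graphs excluding a fixed clique minor, in order to reduce the vicinity of the wall to a nearly-planar configuration, together with a delicate topological rerouting argument. For the present paper only the \emph{statement} of Theorem~\ref{Th:RS} is required; it will be combined with the clique-cutset decomposition tree and the $2$-join decomposition of Section~\ref{sec:decTh} to establish fixed-parameter tractability of $k$-\textsc{in-a-Cycle} on $\mathcal C$.
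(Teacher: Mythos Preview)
Your proposal is correct: the paper does not prove Theorem~\ref{Th:RS} at all but simply cites it from~\cite{RS:GraphMinor13} and uses it as a black box, exactly as you identified in your opening sentence. Your additional sketch of the irrelevant-vertex/bounded-treewidth win/win is an accurate high-level summary of the Robertson--Seymour argument, but it is not required here and the paper makes no attempt to supply it.
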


\begin{lemma}\label{k-in-a-cycle-basic}
For any fixed integer $k$, there is an algorithm with the following specifications:
\begin{description}
\item[ Input:] A graph $G\in\mathcal B$ and a set $\mathcal V=\{v_1,\ldots,v_k\}$ of $k$ distinct vertices from $G$.
\item[ Output:]
YES if the problem $(G,\mathcal V)$ has a solution, and NO otherwise.
\item[ Running time:] $\mathcal O(n^5)$.
\end{description}
\end{lemma}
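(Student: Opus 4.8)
The plan is to exploit the very rigid way in which a basic graph is a line graph $L(R)$ together with a clique $K$, and to reduce the search for a hole of $G$ through $\mathcal V$ to a bounded number of calls to the $k$-\textsc{Disjoint Paths} algorithm of Theorem~\ref{Th:RS}, run on slight modifications of the skeleton~$R$. Throughout, $R$ is the skeleton and $K$ the special clique of $G$ (these, and the auxiliary vertex sets introduced below, can be computed in polynomial time, so this preprocessing does not affect the running time), and for $v\in V(L(R))$ we write $e_v$ for the edge of $R$ corresponding to~$v$. We may assume $k\geq 3$: for $k\leq 2$ the required algorithm is already essentially contained in the proof of Theorem~\ref{2inAhole} (Cases~1 and~2), plus the easy reduction for adjacent terminals. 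Note that for $k\geq 3$ a hole containing $\mathcal V$ meets $K$ in at most two vertices, so if three or more of the $v_i$ lie in $K$ we return NO, and otherwise at least one $v_i$ lies in $V(L(R))$.

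The first step is a classification of the holes of $G$. Since $K$ is a clique, a hole $C$ of $G$ meets $K$ in at most two vertices, which are then consecutive on $C$, so $C$ has one of three types. \textbf{(I)} $C\cap K=\emptyset$: then $C=L(C_R)$ for a cycle $C_R$ of $R$, with $v_i\in C$ iff $e_{v_i}\in E(C_R)$; here $C_R$ has length at least~$4$ and is chordless because $R$ is triangle-free and chordless. \textbf{(II)} $C\cap K=\{w\}$: then $C\sm w=L(Q_R)$ for a path $Q_R=y_0y_1\dots y_m$ of $R$ with $m\geq 3$; the basic-graph axioms (in particular, that an $L(R)$-vertex of $L(R)$-degree at least~$2$ that is adjacent to $K$ has its $K$-neighbour adjacent to no other vertex of $L(R)$) force $y_0,y_m$ to have degree~$1$ in $R$ and $y_1,y_{m-1}$ to have degree~$2$, and $w$ is adjacent to the two end-edges $y_0y_1$ and $y_{m-1}y_m$. \textbf{(III)} $C\cap K=\{w_1,w_2\}$: then $C\sm\{w_1,w_2\}=L(Q_R)$ for a path $Q_R=y_0\dots y_m$ with $m\geq 2$, $y_0,y_m$ of degree~$1$ in $R$, $w_i$ adjacent to the $i$-th end-edge of $Q_R$ and nonadjacent to the other. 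Conversely, each such configuration is a hole of $G$: the key points are that $L(Q_R)$ is automatically chordless (two non-consecutive edges of a path of $R$ share no vertex), that interior edges of $Q_R$ are incident to no degree-$1$ vertex and hence anticomplete to $K$, and that the remaining required nonadjacencies are exactly those listed (for instance the two ends of $L(Q_R)$ are automatically nonadjacent when $m\geq 3$).

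Given $(G,\mathcal V)$, the sought hole has one of the three types, its (at most two) vertices of $K$ being the $v_i$ lying in $K$ together with at most two further vertices of $K$ that we guess (so $\mathcal O(n^2)$ guesses in all; type~(I) needs none, type~(II) at most one). Once the vertices of $K$ on the hole are fixed, in each type the task on $R$ becomes either: find a cycle through the at most $k$ prescribed edges $\{e_{v_i}:v_i\in V(L(R))\}$ (type~(I)); or find a path through these prescribed edges whose two endpoints lie in two precomputed vertex sets of $R$ — the degree-$1$ vertices whose incident edge is ``seen'' by the fixed vertex of $K$ in the appropriate way, plus the degree-$2$ side conditions for type~(II). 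Each of these reduces, by subdividing the prescribed edges, guessing the cyclic (resp.\ linear) order of the resulting $\mathcal O(k)$ terminals, replacing each terminal by an ``in'' and an ``out'' copy joined by an edge, and, for paths, adjoining a super-source and super-sink complete to the respective endpoint sets, to an instance of $k'$-\textsc{Disjoint Paths} with $k'=\mathcal O(k)$; Theorem~\ref{Th:RS} solves each such instance in time $\mathcal O(n^3)$ for fixed~$k$. From a solution one recovers, via $L(\cdot)$ and re-inserting the fixed vertices of $K$, an explicit candidate hole that we then verify to be a chordless cycle through $\mathcal V$; the only possible degeneracy is the collapse of a type-(II) path to a triangle, which is recognised and which, as one checks, cannot coexist with a genuine longer hole through the same prescribed edges (since such a collapse forces the two prescribed edges to share a degree-$2$ vertex between two pendant vertices).

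Since there are $\mathcal O(1)$ types and terminal orderings, $\mathcal O(n^2)$ guesses of the vertices of $K$ on the hole, and each of the resulting $\mathcal O(n^2)$ disjoint-paths calls costs $\mathcal O(n^3)$ for fixed $k$, the total running time is $\mathcal O(n^5)$, with the hidden constant depending only on~$k$. The step I expect to be the main obstacle is the structural classification underlying the reduction: pinning down exactly which of the chordlessness requirements for types~(II) and~(III) come for free (the adjacencies of $K$ to the interior of $Q_R$, the nonadjacency of the two ends of $L(Q_R)$) and which must be imposed through the guesses of $w$, $w_1$, $w_2$ (the adjacencies of $K$ to the ends of $Q_R$), and then checking that the cycle and constrained-path instances handed to the disjoint-paths routine capture precisely the holes of $G$, so that the reduction is both sound and complete.
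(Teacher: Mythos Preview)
Your approach is correct and reaches the same $\mathcal O(n^5)$ bound, but it differs from the paper's argument. The paper first isolates the special clique $K$ (as the maximal clique containing all claw-centres, computed in $\mathcal O(n^2m)$) and then proceeds in two stages: when $K=\emptyset$ it quotes the standard reduction, from \cite{IDP-clawFPT}, of the induced-disjoint-paths problem on a line graph to $2^{2k}$ instances of \textsc{Disjoint Paths} on its root $R$; when $K\neq\emptyset$ it does not classify holes at all, but instead, for each ordered pair $(u,v)$ of $K$-neighbours in $L(R)$, builds an auxiliary graph $G_{u,v}$ (add the short $K$-path from $u$ to $v$, delete the other $L(R)$-vertices it sees) and observes that each $G_{u,v}$ is again (wheel, diamond, claw)-free, hence again a line graph of a triangle-free chordless graph, so the $K=\emptyset$ case applies to it. This is more modular and keeps the structural analysis to a minimum.

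Your route is more self-contained: you classify holes directly by $|C\cap K|\in\{0,1,2\}$ and set up the \textsc{Disjoint Paths} instances on $R$ by hand, trading the citation to \cite{IDP-clawFPT} and the $G_{u,v}$ trick for explicit case analysis. Both approaches guess the $\mathcal O(n^2)$ ways the hole can meet $K$ and then call Robertson--Seymour a bounded number of times per guess on a graph with $\mathcal O(n)$ vertices. One place where you should be more careful is the type-(II) degree-$2$ side condition on $y_1,y_{m-1}$ together with the $m\geq 3$ constraint: your sentence that a collapsed triangle ``cannot coexist with a genuine longer hole through the same prescribed edges'' is ultimately right (a pendant prescribed edge forces the path through a degree-$2$ vertex to its unique continuation, so the short path is the only one), but as written it is terse enough that a reader may not see why the single yes/no answer from \textsc{Disjoint Paths} suffices without the ability to ask for a longer witness.
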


\begin{proof}
Clearly, we may assume that $G$ is connected. Recall that $G$ is
(wheel, diamond)-free. In time
$\mathcal O(n^2m)$ we can find the set $C$ of all centers of claws
in $G$. If $|C|\geq 2$, then let $K$ be the maximal (w.r.t.\
inclusion) clique that contains $C$ (this can be done in
$\mathcal O(n)$-time since $G$ is diamond-free). Otherwise, let
$K=C$.  So $G\setminus K$ is (wheel, diamond, claw)-free. By Lemma
2.4 in \cite{twf-p1} it follows that $G\setminus K$ is a line graph
of a triangle-free chordless graph. In $\mathcal O(n+m)$-time we can
compute graph $R$ such that $G\setminus K=L(R)$ (see \cite{lehot,
rous}). It follows that $K$ is a special clique and $R$ a skeleton
of $G$.

To solve the given problem it is enough to solve $k!$ problems $(G,\mathcal W)$, where $\mathcal W=\{(v_{\sigma(1)},v_{\sigma(2)}),(v_{\sigma(2)},v_{\sigma(3)}),\ldots,(v_{\sigma(k-1)},v_{\sigma(k)}),(v_{\sigma(k)},v_{\sigma(1)})\}$ and $\sigma$ is a permutation of $\{1,2,\ldots,k\}$ (note that these are not \textsc{Induced Disjoint Path} problems by our definition, but they are by the definition used in \cite{IDP-clawFPT}, where it is allowed that the paths in the solution share endnodes; see also Subsection \ref{sub:related}). Hence, it is enough to show that each of them can be solved in $\mathcal O(n^5)$ time.
\smallskip

\noindent{\bf Case 1.} $K=\emptyset$.
\smallskip

Then $G=L(R)$, so, as noted in \cite[Lemma 3.7]{IDP-clawFPT}, to solve each of the given problems it is enough to solve $2^{2k}$ \textsc{Disjoint Paths} problems on $R$. These problems can be solved using the algorithm from Theorem \ref{Th:RS}. Since edges of $R$ are vertices of $G$, graph $R$ has $\mathcal O(n)$ vertices (in the proof of
\cite[Lemma 3.7]{IDP-clawFPT} a worse bound $\mathcal O(n^2)$ is used, which leads to a worse running time in that lemma), so the total running time in this case is $\mathcal O(n^3+n^2m)=\mathcal O(n^4)$.
\smallskip

\noindent{\bf Case 2.} $K\neq \emptyset$.
\smallskip

Let $S$ be the set of all vertices of $L(R)$ that have a neighbor in $K$. For each pair $(u,v)$ of vertices from $S$ we build a graph $G_{u,v}$ in the following way: we start with $L(R)$, add to it the unique chordless $(u,v)$-path $P_{uv}$ (of length 2 or 3) whose interior vertices are from $K$, and remove all vertices from $L(R)\setminus\{u,v\}$ that have a neighbor in the interior of $P_{uv}$. It is easy to see that $(G,\mathcal W)$ has a solution if and only if $(L(R),\mathcal W)$ or $(G_{u,v},\mathcal W)$, for some $u,v\in S$, has a solution. Each of the graphs $L(R)$ and $G_{u,v}$, for $u,v\in S$, is (wheel, diamond, claw)-free, and hence, by Lemma
2.4 in \cite{twf-p1}, it is the line graph of a triangle-free chordless graph. So, each of the problems $(L(R),\mathcal W)$ and $(G_{u,v},\mathcal W)$, for $u,v\in S$, can be solved as in Case 1, and since $|S|=\mathcal O(n)$, this implies an $\mathcal O(n^2m+n^2\cdot n^3)=\mathcal O(n^5)$-time algorithm for solving the given problem.
\end{proof}

\begin{theorem}\label{k-in-a-cycle-main}
For any fixed integer $k$, there is an algorithm with the following specifications:
\begin{description}
\item[ Input:] A graph $G\in\mathcal C$ and a set $\mathcal V=\{v_1,\ldots,v_k\}$ of $k$ distinct vertices from $G$.
\item[ Output:]
YES if the problem $(G,\mathcal V)$ has a solution, and NO otherwise.
\item[ Running time:] $\mathcal O(n^6)$.
\end{description}
\end{theorem}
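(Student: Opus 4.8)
The plan is to reduce the general problem to the basic case handled by Lemma~\ref{k-in-a-cycle-basic} via the clique cutset decomposition tree, exactly mirroring the structure of the $\mathcal O(nm)$ algorithm for $2$-\textsc{in-a-Cycle}. First I would build a clique cutset decomposition tree $T$ for $G$ using Theorem~\ref{th:tarjan}; this costs $\mathcal O(nm)$ and produces $\mathcal O(n)$ leaves, each of which is a graph in $\mathcal C$ with no clique cutset, hence (by Theorem~\ref{decomposeTW} and Lemma~\ref{diamondCliqueCut}) a graph in $\mathcal B$. The key structural fact to extract is the analogue of Case~3 in the proof of Theorem~\ref{2inAhole}: a chordless cycle through $\mathcal V$ exists in $G$ if and only if it exists in one of the blocks of decomposition, and iterating, if and only if it exists in one of the leaves of $T$. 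The ``only if'' direction is the one requiring care: if a chordless cycle $C$ of $G$ passes through a clique cutset $K_i$ with split $(A_i,K_i,B_i)$, then $C$ meets $K_i$ in at most two vertices (a chordless cycle cannot contain three pairwise adjacent vertices), and if it meets $K_i$ in exactly two vertices they must be consecutive on $C$; consequently $C$ cannot have vertices in both $A_i$ and $B_i$, so $C$ lies entirely in one block. Since $C$ passes through all of $\mathcal V$, all vertices of $\mathcal V$ must lie in that same block.

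Next I would turn this into the algorithm: for each leaf $G'$ of $T$, check whether $\mathcal V \subseteq V(G')$ (recall that each $v_j$ may appear in several leaves since the cutset cliques are shared, but we only care about leaves containing \emph{all} of $\mathcal V$). If no leaf contains all of $\mathcal V$, return NO, justified by the structural equivalence above. If a leaf $G'$ contains $\mathcal V$, then $G' \in \mathcal B$, so we run the algorithm of Lemma~\ref{k-in-a-cycle-basic} on $(G', \mathcal V)$ in time $\mathcal O(n^5)$; if it returns YES for some such leaf we return YES, and if all of them return NO we return NO. Correctness again follows from the equivalence: a solution in $G$ is equivalent to a solution in some leaf, and a solution in a leaf is trivially a solution in $G$ (an induced cycle of an induced subgraph is an induced cycle of the whole graph). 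Since there are $\mathcal O(n)$ leaves and each invocation of Lemma~\ref{k-in-a-cycle-basic} costs $\mathcal O(n^5)$, together with the $\mathcal O(nm) = \mathcal O(n^3)$ cost of building $T$, the total running time is $\mathcal O(n^6)$, as claimed.

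The main obstacle, and the only genuinely non-routine part, is the ``only if'' direction of the structural claim at a clique cutset — verifying that a chordless cycle through all terminals cannot be ``split'' by a clique cutset and hence is confined to a single block. This is a short argument (a chordless cycle intersects a clique in at most two, necessarily consecutive, vertices, so it stays on one side), but it must be stated cleanly and then lifted by induction along the decomposition tree; since every internal cutset $K_i$ of $T$ is also a clique cutset of $G$, the induction goes through. One small bookkeeping point to handle carefully: because a terminal $v_j$ lying in a cutset clique appears in more than one leaf, the phrase ``a leaf containing all of $\mathcal V$'' is exactly what the structural claim delivers, and no terminal is lost. Everything else is assembly of previously established results.
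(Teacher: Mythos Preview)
Your argument has a genuine gap: you claim that the leaves of the clique cutset decomposition tree lie in $\mathcal B$, but this is false. Theorem~\ref{decomposeTW} says that a graph in $\mathcal C$ is basic, \emph{or} has a clique cutset, \emph{or} has a 2-join. Hence a leaf of the clique cutset decomposition tree is a graph in $\mathcal D$ (i.e.\ in $\mathcal C$ with no clique cutset), but such a graph may still have a 2-join and need not be basic. Lemma~\ref{diamondCliqueCut} only tells you that the leaves are diamond-free; it does not put them in $\mathcal B$. Consequently you cannot invoke Lemma~\ref{k-in-a-cycle-basic} on the leaves directly.

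What you have correctly identified is the final, easy step of the paper's proof: reducing from $\mathcal C$ to $\mathcal D$ via the clique cutset tree, using the observation that a chordless cycle meets a clique in at most two consecutive vertices and hence lies in a single block. The paper does exactly this. But the substantial part of the proof, which you have entirely omitted, is the reduction from $\mathcal D$ to $\mathcal B$ via the 2-join decomposition tree of Lemma~\ref{DT-construction}. One must analyse how the terminal set $\mathcal V$ interacts with each 2-join $(X_1^i,X_2^i)$: if $\mathcal V\subseteq X_1^i$ one passes to the basic block $G_B^{i+1}$; if $\mathcal V\subseteq X_2^i$ one recurses on $G^{i+1}$; if $\mathcal V$ is split, one adds a marker-path vertex to each side's terminal set and solves both. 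This case analysis, together with the bookkeeping showing that solutions transfer back and forth across the 2-join (replacing marker paths by genuine paths through the other side and vice versa), is where the work lies and is what drives the $\mathcal O(n^6)$ bound.
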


\begin{proof}
In \cite{twf-p2} an $\mathcal O(n^2m)$-time algorithm is given for recognizing whether a graph belongs to $\mathcal B$. If $G\in\mathcal B$, then the problem $(G,\mathcal V)$ can be solved in time $\mathcal O(n^5)$ using Lemma \ref{k-in-a-cycle-basic}. So we may assume that $G\in \mathcal C \setminus \mathcal B$.

Next, we consider the case when $G\in\mathcal D\setminus \mathcal B$. Using Lemma \ref{DT-construction} we build a 2-join decomposition tree $T_G$ in time $\mathcal O(n^4m)$ (throughout the proof we use the notation from the definition of $T_G$). Let $c_i$ (resp.\ $c_i^B$) be an internal vertex of the marker path of $G^i$ (resp.\ $G_B^i$), for $1\leq i\leq p$. By Theorem \ref{decomposeTW}, graphs $G_B^1,\ldots,G_B^{p},G^p$ are in $\mathcal B$.

Let $\mathcal V^0=\mathcal V$. We now describe the problems $(G^i,\mathcal V^i)$ and $(G_B^i,\mathcal V_B^i)$, $1\leq i\leq p$, that we solve during our algorithm to obtain the solution of the problem $(G,\mathcal V)$.

We first introduce some notation. Let $C^i$ be any chordless cycle of $G^i$, and $Q_B^i$ (resp.\ $Q^i$) be the part of $C^i$ contained in $X_1^i$ (resp.\ $X_2^i$). Note that one of the following holds: (1) $Q_B^i$ and $Q^i$ are subpaths of $C^i$ of length at least 1; or (2) $Q^i$ is not a path of length at least 1; or (3) $Q_B^i$ is not a path of length at least 1. Then we define $C_B^{i+1}$ (resp.\ $C^{i+1}$) as the chordless cycle obtained from $C^i$ in the following way: (1) by replacing $Q^{i}$ (resp.\ $Q_{B}^{i}$) with the marker path of $G_B^{i+1}$ (resp.\ $G^{i+1}$); (2) by replacing the vertices (if they exist) of $C^i\cap A_2^i$ or $C^i\cap B_2^i$ with endnodes $a_2^i$ and $b_2^i$ of the marker path of $G_B^{i+1}$ (resp.\ $C^{i+1}$ is empty); (3) $C_B^{i+1}$ is empty (resp.\ $C^{i+1}$ is equal to $C^i$). (Note that $A_2^i$ and $B_2^i$ are cliques, so in case (3) $C^i$ is contained in $X_2^i$.)

Similarly, for a (non-empty) chordless cycle $C_B^{i+1}$ (resp.\ $C^{i+1}$) of $G_B^{i+1}$ (resp.\ $G^{i+1}$) that contains the marker path of $G_B^{i+1}$ (resp.\ $G^{i+1}$) we define a chordless cycle $C^i$ of $G^i$ as follows. Let $Q^i$ (resp.\ $Q_B^i$) be any chordless path from a vertex of $A_1^i$ (resp.\ $A_2^i$) to a vertex of $B_1^i$ (resp.\ $B_2^i$) in $X_1^i$ (resp.\ $X_2^i$) (this path exists by Lemma \ref{l:consistent}). Then $C^i$ is obtained from $C_B^{i+1}$ (resp.\ $C^{i+1}$) by replacing the marker path of $G_B^{i+1}$ (resp.\ $G^{i+1}$) by $Q^i$ (resp.\ $Q_B^i$).
\smallskip

Now, we describe how our algorithm solves the problem $(G^i,\mathcal V^i)$. It is enough to consider the following 3 cases.
\smallskip

\noindent{\bf Case 1.} $\mathcal V^i\subseteq X_1^i$.
\smallskip

\noindent We note that $C^i$ is a solution of $(G^i,\mathcal V^i)$ if and only if $C_B^{i+1}$ is a solution of $(G_B^{i+1},\mathcal V^i)$. So, in this case it is enough to decide whether $(G_B^{i+1},\mathcal V^i)$ has a solution, which can be done using Lemma \ref{k-in-a-cycle-basic} in time $\mathcal O(n^5)$.
\smallskip

\noindent{\bf Case 2.} $\mathcal V^i\subseteq X_2^i$.
\smallskip

\noindent We note that $C^i$ is a solution of $(G^i,\mathcal V^i)$ if and only if $C^{i+1}$ is a solution of $(G^{i+1},\mathcal V^i)$. So, we proceed recursively, by solving the problem $(G^{i+1},\mathcal V^i)$.
\smallskip

\noindent{\bf Case 3.} $\mathcal V^i\cap X_1^i\neq\emptyset$ and $\mathcal V^i\cap X_2^i\neq\emptyset$.
\smallskip

\noindent Let $\mathcal V_B^{i+1}=(\mathcal V^i\cap X_1^i)\cup\{c_B^{i+1}\}$ and $\mathcal V^{i+1}=(\mathcal V^i\cap X_2^i)\cup\{c^{i+1}\}$. We note that $C^i$ is a solution of $(G^i,\mathcal V^i)$ if and only if $C^{i+1}$ is a solution of $(G^{i+1},\mathcal V^{i+1})$ and $C_B^{i+1}$ is a solution of $(G_B^{i+1},\mathcal V_B^{i+1})$ (indeed, in this case the cycle $C^{i+1}$ (resp.\ $C_B^{i+1}$) constructed from $C^i$ contains the marker path of $G^{i+1}$ (resp.\ $G_B^{i+1}$)). So, we solve the problem $(G_B^{i+1},\mathcal V^{i+1})$ using algorithm from Lemma \ref{k-in-a-cycle-basic}. If this algorithm returns NO, then we return NO and stop. Otherwise, we proceed recursively, by solving the problem $(G^{i+1},\mathcal V^{i+1})$ (note that $|\mathcal V^{i+1}|\leq |\mathcal V^i|$).
\smallskip

The running time of the described algorithm is $\mathcal O(n^4m+n\cdot n^5)=\mathcal O(n^6)$, since there are at most $p+1=\mathcal O(n)$ calls to the algorithm from Lemma \ref{k-in-a-cycle-basic}.

\smallskip

Finally, let us consider the general case, that is $G\in\mathcal C$. First, using Theorem \ref{th:tarjan} we build a clique cutset decomposition tree $T$ of $G$ in time $\mathcal O(nm)$ (in what follows we use the notation from the definition of $T$). Note that for each $1\leq i\leq p$ a chordless cycle can not contain vertices from both $A_i$ and $B_i$. So, for each $1\leq i\leq p$ we check if both $A_i\cap \mathcal V$ and $B_i\cap\mathcal V$ are non-empty, and if this is the case we return NO. This can be done in time $\mathcal O(n)$. Hence, we may assume that $\mathcal V$ is contained in $G_j^B$, for some $j\in\{1,2,\ldots,p+1\}$. Then a desired cycle, if it exists, is also contained in $G_j^B$, so it is enough to solve the problem $(G_j^B,\mathcal V)$. Since $G_j^B\in\mathcal D$, this can be done in time $\mathcal O(n^6)$ using the previous part of the proof. Hence the running time of the algorithm is $\mathcal O(n+n^6)=\mathcal O(n^6)$.
\end{proof}

\begin{corollary}\label{k-in-a-cycle-FPT}
For graphs in $\mathcal C$ the $k$-\textsc{in-a-Cycle} problem is fixed-parameter tractable,
when parameterized by $k$.
\end{corollary}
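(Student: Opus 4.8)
Corollary \ref{k-in-a-cycle-FPT} asserts that $k$-\textsc{in-a-Cycle} is fixed-parameter tractable on $\mathcal C$. The plan is simply to observe that Theorem~\ref{k-in-a-cycle-main} already does all the work, since it gives, for \emph{every} fixed $k$, an algorithm running in time $\mathcal O(n^6)$, where the exponent $6$ is a constant independent of $k$. To turn this into an FPT statement in the sense used for Theorem~\ref{Th:RS}, we need only inspect how the running time depends on $k$ once $k$ is allowed to vary: the only places where $k$ enters are (a) the $k!$ permutations enumerated in the proof of Lemma~\ref{k-in-a-cycle-basic}, and (b) the calls, inside that lemma, to the linkage algorithm of Theorem~\ref{Th:RS}, whose running time is $h(k)\,n^3$ for a computable function $h$ depending only on $k$ (the number of \textsc{Disjoint Paths} problems solved, $2^{2k}$ per permutation, is also a function of $k$ alone).

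Carrying this out, I would first recall that the algorithm of Theorem~\ref{k-in-a-cycle-main} performs $\mathcal O(n)$ calls to the algorithm of Lemma~\ref{k-in-a-cycle-basic}, plus $\mathcal O(n^4 m)$ and $\mathcal O(nm)$ overhead for building the decomposition trees, none of which depends on $k$. Then I would note that a single call to Lemma~\ref{k-in-a-cycle-basic} solves $\mathcal O(k!)$ problems $(G,\mathcal W)$, each of which reduces to $\mathcal O(n^2)$ instances of a \textsc{Disjoint Paths}-type problem in a line-graph root (via the $|S|^2 = \mathcal O(n^2)$ choices of $u,v$), and each such instance is handled by $2^{2k}$ applications of Theorem~\ref{Th:RS}, i.e.\ in time $2^{2k} h(k) n^3$. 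Multiplying through, one call to Lemma~\ref{k-in-a-cycle-basic} costs $\mathcal O\!\big(k!\,2^{2k} h(k)\, n^5\big) = g(k)\, n^5$ for a computable function $g$, and hence the whole algorithm of Theorem~\ref{k-in-a-cycle-main} runs in time $\mathcal O\!\big(g(k)\, n^6\big)$. This is exactly the form $f(k)\, n^c$ with $c=6$ a constant independent of $k$, which is the definition of fixed-parameter tractability.

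There is essentially no obstacle here: the corollary is a bookkeeping consequence of the analysis already performed for Theorem~\ref{k-in-a-cycle-main}. The only point requiring a moment's care is to verify that every quantity depending on $k$ in the proofs of Lemma~\ref{k-in-a-cycle-basic} and Theorem~\ref{k-in-a-cycle-main} is separated multiplicatively from the polynomial-in-$n$ factor, and in particular that the exponent of $n$ never grows with $k$ --- which is true because the only $k$-dependent subroutine, Theorem~\ref{Th:RS}, has fixed exponent $3$, and the reductions around it introduce only $k$-dependent \emph{multiplicative} constants (the $k!$, the $2^{2k}$) and fixed extra polynomial factors in $n$. Hence the proof is a single sentence invoking Theorem~\ref{k-in-a-cycle-main} together with this observation.
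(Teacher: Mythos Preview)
Your proposal is correct and follows essentially the same approach as the paper: track the $k$-dependence through the algorithm of Theorem~\ref{k-in-a-cycle-main}, observing that Lemma~\ref{k-in-a-cycle-basic} costs $2^{2k}k!\,h(k)\,n^5$ per call and is invoked at most $n$ times, giving total time $2^{2k}k!\,h(k)\,n^6$. The paper's proof is precisely this bookkeeping computation, with the same constants you identified.
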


\begin{proof}
Let $(G,\mathcal V)$ be an instance of the \textsc{$k$-in-a-Cycle} problem.
By Theorem \ref{Th:RS}, the problem $k$-\textsc{Disjoint Paths} can be solved in time $h(k)n^3$, where $h$ is a computable function that depending only on $k$. Since Lemma \ref{k-in-a-cycle-basic} has at most $2^{2k}k!n^2$ calls to this algorithm, the problem $(G,\mathcal V)$ can be solved in time $2^{2k}k!h(k)n^5$ for graphs in $\mathcal B$. Now, since for each $i\in\{0,1,\ldots,p-1\}$ the algorithm from Theorem \ref{k-in-a-cycle-main} has at most one call to the algorithm from Lemma \ref{k-in-a-cycle-basic}, and $p\leq n$ (by Lemma \ref{DT-construction}), we conclude that the problem $(G,\mathcal V)$ can be solved in time $2^{2k}k!h(k)n^6$ for graphs in $\mathcal C$.
\end{proof}


\section{Induced disjoint paths}\label{sec:paths}

In this section, we consider an instance $(G,\mathcal W)$ of the
$k$-\textsc{Induced Disjoint Paths} problem, that is a graph $G$, a
set $\mathcal W=\{(s_1,t_1),(s_2,t_2),\ldots,(s_k,t_k)\}$ of pairs of
vertices of $G$ such that all $2k$ vertices are distinct and the only
edges between these vertices are of the form $s_it_i$, for some
$1\leqslant i\leqslant k$. We denote
$W = \{s_1,\ldots,s_k, t_1,\ldots,t_k\}$.  Vertices in
$W$ are the {\it terminals} of $\mathcal W$.

Recall that $k$ is a fixed integer (that is not part of the input).
We have to decide whether there exist $k$ vertex-disjoint paths
$P_i=s_i\ldots t_i$, $1\leqslant i\leqslant k$, such that there are no edges
between vertices of paths $P_i$ and $P_j$, for $i\neq j$.

Note that $(G,\mathcal W)$ has a solution (say $P_i=s_i\ldots t_i$, $1\leqslant i\leqslant k$) if and only if $(G,\mathcal W)$ has a
solution so that all the paths in this solution are chordless (it is enough to take
$P_i'=s_i\ldots t_i$, $1\leqslant i\leqslant k$, where $P_i'$ is a chordless path of $G$ contained in
$P_i$). So, when solving $(G,\mathcal W)$ we may add the condition
that the paths $P_i=s_i\ldots t_i$, $1\leqslant i\leqslant k$, are
chordless. Throughout the paper when we say that a set of paths
$\mathcal P$ is a solution of $(G,\mathcal W)$, we will assume that
all paths in $\mathcal P$ are chordless.

In Subsection \ref{sub:cliquecut}, we study how clique cutset can be
used. In Subsection \ref{sub:basic}, we study basic graphs. In
Subsection \ref{sub:2join}, we study how 2-joins can be used, and we
give the main algorithm. In Subsection~\ref{sub:related}, we study
related problems.

\subsection{Clique cutsets and induced paths}
\label{sub:cliquecut}

Let $\mathcal G$ be a class of graphs that is closed under taking
induced subgraphs, and let
$\mathcal G_{\textsc{basic}}\subseteq \mathcal G$. Suppose that
$\mathcal G$ and $\mathcal G_{\textsc{basic}}$ satisfy the following:

\begin{center} {\it If $G\in\mathcal G$, then
    $G\in\mathcal G_{\textsc{basic}}$ or $G$ has a clique cutset.}
\end{center}

Then we say that $\mathcal G$ is {\it
  $\mathcal G_{\textsc{basic}}$-decomposable using clique cutsets}.

Let $\mathcal G$ be a class that is
$\mathcal G_{\textsc{basic}}$-decomposable using clique cutsets. In
this section we show how an $\mathcal O(n^c)$-time algorithm for
$k$-\textsc{Induced Disjoint Paths} problem for graphs in
$\mathcal G_{\textsc{basic}}$, where $c$ is a constant (that does not
depend on $k$), can be turned into an $\mathcal O(n^{2k+c})$-time
algorithm for $k$-\textsc{Induced Disjoint Paths} problem for graphs
in $\mathcal G$.

Throughout the rest of the section, we consider an instance
$(G,\mathcal W)$ as described above for a graph $G\in\mathcal G$.
Graphs from $\mathcal G_{\textsc{basic}}$ will be refered to as
\emph{basic graphs}.

Let $T$ be a clique cutset decomposition tree of $G$ of depth $p$
(with all notations as in Section~\ref{sec:decTh}).  The next
lemma tells how a set obtained during the decomposition process
behaves at the root level (that is in $G$).

\begin{lemma}
  \label{l:neighborhood}
  Suppose that $G$ is not basic (so $p\geq 1$).  Let
  $i\in\{0,1, \dots, p-1\}$ and $X\in\{A_i \cup K_i,K_i\cup B_i,K_p\cup B_p\}$.  If $C$ is a connected component of $G\sm X$, then
  $N_G(C) = N_G(C)\cap X$ is a clique.
\end{lemma}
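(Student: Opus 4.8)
The plan is to prove, by induction on $i$, the following statement for every $i\in\{0,1,\dots,p-1\}$: for $X\in\{A_i\cup K_i,\ K_i\cup B_i\}$, every connected component $C$ of $G\sm X$ has $N_G(C)$ a clique. This is enough. Indeed, if $C$ is a component of $G\sm X$ then any vertex of $V(G)\sm X$ adjacent to $C$ lies in the same component of $G\sm X$ as $C$, hence in $C$; so $N_G(C)\subseteq X$ always, i.e.\ $N_G(C)=N_G(C)\cap X$. Moreover $A_i\cup K_i=V(G_{i+1})$ and $K_i\cup B_i=V(G_{i+1}^B)$, so as $i$ runs over $\{0,\dots,p-1\}$ these sets are exactly the vertex sets of all non-root nodes of $T$; and the remaining choice $K_p\cup B_p$ equals $A_{p-1}\cup K_{p-1}=V(G_p)$, which is already on the list.

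Throughout, write $\overline H=V(G)\sm V(H)$ for a node $H$. Telescoping the splits of $G_0,\dots,G_{p-1}$ gives $V(G)=V(G_i)\cup\overline{G_i}$ with $V(G_i)=A_i\cup K_i\cup B_i$, all these unions being disjoint; hence $\overline{G_{i+1}}=B_i\cup\overline{G_i}$ and $\overline{G_{i+1}^B}=A_i\cup\overline{G_i}$, again disjointly, and in particular $\overline{G_0}=\emptyset$. The case $i=0$ is then immediate: a component $C$ of $G\sm X$ is contained in $B_0$ or in $A_0$, and since $A_0$ is anticomplete to $B_0$ we get $N_G(C)\subseteq K_0$. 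For the inductive step I will also use the routine fact that, $C$ being a component of $G[\overline{G_{i+1}}]$ or of $G[\overline{G_{i+1}^B}]$, every connected component of $C\cap\overline{G_i}$ is actually a connected component of $G[\overline{G_i}]$ (maximality carries over from the ambient graph to $G[\overline{G_i}]$, and back).

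Assume $i\ge 1$ and that the statement holds for $G_i$ (established at step $i-1$; note it yields that every component of $G[\overline{G_i}]$ has clique neighbourhood). The key observation is: if $D$ is a component of $G[\overline{G_i}]$, then $N_G(D)$ is a clique and $N_G(D)\subseteq V(G_i)=A_i\cup K_i\cup B_i$; so, since $A_i$ and $B_i$ are anticomplete, if $N_G(D)$ meets $B_i$ then $N_G(D)\cap A_i=\emptyset$, i.e.\ $N_G(D)\subseteq K_i\cup B_i$, and symmetrically if $N_G(D)$ meets $A_i$ then $N_G(D)\subseteq A_i\cup K_i$. Now consider $X=A_i\cup K_i$ (the case $X=K_i\cup B_i$ is identical after exchanging $A_i$ and $B_i$), and a component $C$ of $G[B_i\cup\overline{G_i}]$; put $C_B=C\cap B_i$, $C_O=C\cap\overline{G_i}$. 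If $C_B=\emptyset$ then $C$ is a component of $G[\overline{G_i}]$ and we are done by hypothesis; if $C_O=\emptyset$ then $C\subseteq B_i$ and $N_G(C)\subseteq K_i$. In the remaining case, fix a component $D$ of $G[C_O]$ — a component of $G[\overline{G_i}]$ by the fact above. Since $C$ is connected and $D\subsetneq C$, $D$ has a neighbour in $C\sm D$; this neighbour cannot lie in another component of $G[\overline{G_i}]$, so it lies in $C_B\subseteq B_i$, and the observation gives $N_G(D)\subseteq K_i\cup B_i$. As $N_G(C)\subseteq A_i\cup K_i$, $N_G(C)\subseteq N_G(C_B)\cup\bigcup_D N_G(D)$, $N_G(C_B)\cap(A_i\cup K_i)\subseteq K_i$ (because $A_i$ is anticomplete to $B_i\supseteq C_B$), and $N_G(D)\cap(A_i\cup K_i)\subseteq K_i$ for each such $D$, we conclude $N_G(C)\subseteq K_i$, a clique.

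The only non-mechanical step is this last case. A priori $C$ may reach down into vertices of $G$ that were removed at much earlier stages of the decomposition (the set $C_O$), and such vertices might seem able to be adjacent to $A_i$ — which would break the conclusion. The observation above is precisely what rules this out: any piece of $\overline{G_i}$ lying inside $C$ is attached, within $C$, to the $B_i$-side, hence has a neighbour in $B_i$, hence — its neighbourhood being a clique by induction and $A_i$ being anticomplete to $B_i$ — has no neighbour in $A_i$. Everything else (the set identities, the transfer of component-maximality, and the symmetric case $X=K_i\cup B_i$) is bookkeeping.
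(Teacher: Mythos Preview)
Your proof is correct, and it takes a genuinely different route from the paper's. The paper argues by induction on the depth $p$ of the tree: it peels off the root, applies the induction hypothesis to the subtree rooted at $G_1$ (a tree of depth $p-1$ for the graph $G_1$), and then crucially uses the leaf property that $G[K_0\cup B_0]$ has no clique cutset to conclude that $B_0$ is connected and every vertex of $K_0$ has a neighbour in $B_0$; this lets it show that $C\setminus B_0$ is a single connected component of $G_1\setminus X$, whose clique neighbourhood then absorbs all of $N_G(C)$. You instead induct on the level $i$ within a fixed tree for $G$, and never invoke the leaf property: your key move is that each component $D$ of $C\cap\overline{G_i}$ is already a component of $G[\overline{G_i}]$, so by the inductive hypothesis $N_G(D)$ is a clique inside $V(G_i)$, and since $D$ is attached to $C_B\subseteq B_i$ while $A_i$ is anticomplete to $B_i$, the clique $N_G(D)$ cannot meet $A_i$. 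What your approach buys is a more elementary argument that works for any clique-cutset decomposition tree, without using that the leaves have no clique cutset; what the paper's approach buys is a single component $C\setminus B_0$ (rather than a union of pieces $D$) whose neighbourhood is directly a clique, which makes the final containment a one-liner.
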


\begin{proof}
  We prove the result by induction on $p$.  If $p=1$, then $i=0$,
  $(A_0, K_0, B_0)$ is a split of a clique cutset of $G$ and either
  $X = A_0\cup K_0 = K_1 \cup B_1$ or $X=K_0 \cup B_0$. In both cases,
  $N_G(C) \cap X \subseteq K_0$, so the conclusion holds.

  Suppose $p > 1$.  If $i=0$, the conclusion holds as above.  So,
  suppose $i\geq 1$.  Observe that then
  $X\subseteq V(G_1) = A_0\cup K_0$.  Also, the tree obtained from $T$
  by deleting $G_0$ and $G_1^B$ is a decomposition tree for $G_1$ with
  depth $p-1$, and with exactly the same cutsets and splits as in $T$,
  so we may apply the induction hypothesis to it.  Therefore, we know
  that for every connected component $D$ of $G_1\sm X$, $N_{G_1}(D)$
  is a clique.

  Let $C$ be a connected component of $G\sm X$.  Recall that
  $(A_0, K_0, B_0)$ is a split of a clique cutset of $G$ and
  $X\subseteq A_0\cup K_0$.  If $C\subseteq A_0$, then $C$ is
  connected component of $G_1\sm X$, so $N_G(X) = N_{G_1}(C)$ is a
  clique by the induction hypothesis.  Hence, we may assume that $C$
  contains at least one vertex in $K_0\cup B_0$.  In fact, since
  $G[K_0\cup B_0]$ has no clique cutset (by the definition of a
  decomposition tree), it follows that $B_0$ is connected and every
  vertex of $K_0$ has a neighbor in $B_0$, and hence
  $(B_0\cup K_0)\sm X\subseteq C$.

  If $C=B_0$ (meaning in fact that $K_0\subseteq X)$, then
  $N_G(C)\subseteq K_0$.  So suppose $B_0 \subsetneq C$.  Hence,
  $C\cap K_0 \neq \emptyset$.  Note that $C\sm B_0$ is connected,
  because $K_0$ is a clique and every vertex in $C\sm B_0$ can be
  linked by a path to some vertex in $K_0$.  Hence, $C\sm B_0$ is a
  connected component of $G_1\sm X$, so $N_{G_1}(C\sm B_0)$ is a
  clique $K$, and $K_0 \cap X\subseteq K$.  Since $N_G(B_0) \subseteq
  K_0$, we have $N_G(C)\subseteq K$.
\end{proof}

\begin{lemma}
  For every $i=0, \dots, p$, $G[K_i \cup B_i]$ is basic.
\end{lemma}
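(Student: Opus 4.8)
The plan is to recognize the graphs $G[K_i\cup B_i]$, $i=0,\dots,p$, as exactly the leaves of the clique cutset decomposition tree $T$. Indeed, as noted right after the definition of $T$ (using the conventions $B_p=A_{p-1}$, $K_p=K_{p-1}$ and $G_{p+1}^B=G_p$), the leaves of $T$ are precisely the graphs $G_{i+1}^B=G[K_i\cup B_i]$ for $i\in\{0,1,\dots,p\}$. So it is enough to prove that every leaf of $T$ is basic, i.e.\ belongs to $\mathcal G_{\textsc{basic}}$.

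First I would verify, by a straightforward induction on the level, that every node of $T$ is an induced subgraph of $G$. The root $G_0=G$ is an induced subgraph of $G$. Assume $G_i$ is an induced subgraph of $G$, so $G_i=G[V(G_i)]$. Since $(A_i,K_i,B_i)$ is a split of a clique cutset of $G_i$, the sets $A_i\cup K_i$ and $K_i\cup B_i$ are subsets of $V(G_i)\subseteq V(G)$, and the blocks of decomposition $G_{i+1}=G[A_i\cup K_i]$ and $G_{i+1}^B=G[K_i\cup B_i]$ are induced subgraphs of $G_i$, hence of $G$. As $\mathcal G$ is closed under taking induced subgraphs, every node of $T$, and in particular every leaf, belongs to $\mathcal G$.

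It then remains to invoke the decomposability hypothesis. By the definition of a clique cutset decomposition tree, the leaves $G_1^B,\dots,G_p^B,G_p$ of $T$ have no clique cutset. Since $\mathcal G$ is $\mathcal G_{\textsc{basic}}$-decomposable using clique cutsets, any graph of $\mathcal G$ without a clique cutset lies in $\mathcal G_{\textsc{basic}}$; applying this to each leaf yields that $G[K_i\cup B_i]$ is basic for every $i=0,\dots,p$. I do not expect a genuine obstacle here: the argument is pure bookkeeping, and the only delicate points are matching the index shift between $G_{i+1}^B$ and $K_i\cup B_i$ (together with the boundary conventions at $i=p$) and checking that these blocks are honest induced subgraphs of $G$, which is exactly what makes the decomposability hypothesis applicable.
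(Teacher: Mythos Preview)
Your proof is correct and follows essentially the same approach as the paper's: both use that the leaves $G[K_i\cup B_i]$ have no clique cutset (by definition of the decomposition tree) and then apply the $\mathcal G_{\textsc{basic}}$-decomposability hypothesis. The paper's proof is a two-line version that leaves implicit the verification that $G[K_i\cup B_i]\in\mathcal G$, which you spell out carefully via the induced-subgraph induction; your added detail is sound but not a different idea.
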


\begin{proof}
  By the definition of decomposition trees, $G[K_i \cup B_i]$ has no
  clique cutset. From the definition of
  $\mathcal G_{\textsc{basic}}$-decomposable classes,
  $G[K_i \cup B_i]$ is therefore basic.
\end{proof}

Let us assume that a {\it non-negative integer weight} function $w:V\rightarrow \mathbb Z_{\geq 0}$ is assigned to vertices of $G$ (here $\mathbb Z_{\geq 0}$ is the set of non-negative integers). Then for $A\subseteq V$ we define $w(A)=\Sigma_{v\in A}w(v)$.

\begin{lemma}
  \label{l:twoCases}
  Suppose that $G$ is not basic (so $p\geq 1$) and that a non-negative
  integer weight $w(v)$ is given to each vertex $v\in V(G)$. Then
  one of the following holds:
  \begin{enumerate}
  \item\label{o:twotwo} For some $i\in\{0,1, \dots, p-1\}$, $K_i$ is a clique
    cutset of $G$ for which there exists a split $(A, K_i, B)$ such
    that $w(A) \geq 2$ and $w(B) \geq 2$.
  \item\label{o:Xbasic}  For some $i\in\{0,1,\dots, p\}$, every connected component $C$
    of $G\sm (K_i\cup B_i)$, satisfies $w(C) \leqslant 1$.
  \end{enumerate}
\end{lemma}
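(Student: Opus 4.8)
The statement is a \textbf{balanced-separator-or-heavy-block} dichotomy for the clique-cutset decomposition tree $T$, with the threshold weight $2$ playing the role of ``half''. I would prove it by induction on the depth $p$ of $T$, peeling off the root split $(A_0,K_0,B_0)$ exactly as in the proof of Lemma~\ref{l:neighborhood}: the tree obtained from $T$ by deleting $G_0$ and $G_1^B$ is a clique-cutset decomposition tree of $G_1=G[A_0\cup K_0]$ of depth $p-1$, with the same cutsets and splits, so the induction hypothesis applies to it.

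First I would dispose of the easy possibilities at the root. If $w(A_0)\ge 2$ and $w(B_0)\ge 2$, then $(A_0,K_0,B_0)$ is a split of the clique cutset $K_0$ of $G$ witnessing outcome~\ref{o:twotwo} with $i=0$. If $w(A_0)\le 1$, then every component of $G\sm(K_0\cup B_0)=G[A_0]$ has weight at most $w(A_0)\le 1$, so outcome~\ref{o:Xbasic} holds with $i=0$. This already settles the base case $p=1$, since there $K_1\cup B_1=K_0\cup A_0$ and $G\sm(K_1\cup B_1)=G[B_0]$, all of whose components have weight at most $w(B_0)\le 1$. So from now on $p\ge 2$, $w(A_0)\ge 2$, $w(B_0)\le 1$, and I apply the induction hypothesis to $G_1$. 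The structural fact used throughout is that $N_G(B_0)\sm B_0\subseteq K_0$ and that $K_0\subseteq V(G_1)$ is a clique; hence for any clique cutset $K$ of $G_1$ the clique $K_0$ lies on a single side of any split of $K$ (a clique cannot meet both sides), so $B_0$ attaches to exactly one side, and likewise $K_0\cap\big(V(G_1)\sm(K\cup B')\big)$ lies inside a single component of $G_1[V(G_1)\sm(K\cup B')]$ for any leaf-type set $K\cup B'$.

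If the induction gives outcome~\ref{o:twotwo} for $G_1$ — a balanced split $(A',K_i,B')$ of some $K_i$ with $i\in\{1,\dots,p-1\}$ — then $K_i$ is also a clique cutset of $G$, and adjoining $B_0$ to whichever of $A',B'$ meets $K_0$ yields a split of $K_i$ in $G$ in which both sides still have weight $\ge 2$; this is outcome~\ref{o:twotwo} for $G$. The delicate part is transporting outcome~\ref{o:Xbasic}. Suppose the induction returns $i\in\{1,\dots,p\}$ with every component of $G_1\sm(K_i\cup B_i)$ of weight $\le 1$, and write $U_1=V(G_1)\sm(K_i\cup B_i)$, so that $V(G)\sm(K_i\cup B_i)=U_1\cup B_0$ (a disjoint union). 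Since $N_G(B_0)\cap U_1\subseteq K_0\cap U_1$ is a clique contained in $U_1$, it lies in a single component $C^*$ of $G_1[U_1]$ (or is empty). If $K_0\cap U_1=\emptyset$, then $B_0$ is anticomplete to $U_1$ in $G$, the components of $G[U_1\cup B_0]$ are those of $G_1[U_1]$ together with those of $G[B_0]$, all of weight $\le 1$, and outcome~\ref{o:Xbasic} holds for $G$. Otherwise $G[U_1\cup B_0]$ differs from $G_1[U_1]$ only by merging $C^*$ with the part $B_0^*$ of $B_0$ reachable from it, a component of weight at most $w(C^*)+w(B_0)\le 2$.

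The only obstruction is thus the case $w(C^*)=w(B_0)=1$, where the merged component has weight exactly $2$; I expect this ``$+1$ slop from $B_0$'' to be the real work of the proof, precisely because the threshold $2$ does not scale. My plan to remove it is to apply the induction hypothesis not to $(G_1,w)$ but to $(G_1,w')$, where $w'$ is obtained from $w$ by relocating the (at most one) unit of weight of $B_0$ onto a single well-chosen vertex of $K_0$ (for instance one lying in the unique leaf block of the decomposition of $G_1$ that contains the clique $K_0$), so that this unit is absorbed into the component with which $B_0$ merges; and, in the residual situation, to fall back on Lemma~\ref{l:neighborhood} applied to $G$ with $X=K_i\cup B_i$, which makes $N_G(C^*\cup B_0^*)$ a clique, and to restart the argument on the resulting split of $G$ with a strictly smaller heavy component, guaranteeing termination. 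An alternative, which I have verified works cleanly for $p\le 2$, is to prove the contrapositive directly: assuming both outcomes fail everywhere, the hypotheses force $w(A_0)\ge 2$ and $w(B_0)\le 1$, and then inspecting whether the clique $K_0$ meets $A_1$ or is confined to $B_1\cup K_1$ forces $w(B_j)\le 1$ all along the spine, contradicting the failure of outcome~\ref{o:Xbasic} at $i=p$, where $G\sm(K_p\cup B_p)=G[B_0\cup\dots\cup B_{p-1}]$.
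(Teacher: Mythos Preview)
You set up the induction correctly and isolate the genuine obstruction: when the hypothesis returns outcome~\ref{o:Xbasic} for $G_1$ at some $i$, and the component $C^*$ carrying $K_0\cap U_1$ already has weight $1$, merging with $B_0$ yields a component of weight $2$. None of your three patches closes this. Weight relocation fails because whatever $k\in K_0$ you fix in advance, the induction may return an $i$ with $k\in K_i\cup B_i$ while another vertex of $K_0$ lies in $U_1$ (for instance $K_0=\{a,b\}$ with $a\in K_1\cap A_2$ and $b\in A_1\cap K_2$); then $w'|_{U_1}=w|_{U_1}$, the extra unit sits uselessly in the leaf block, and the merge still overshoots. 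The restart via Lemma~\ref{l:neighborhood} hands you a clique $K'=N_G(C^*\cup B_0)$, but outcome~\ref{o:twotwo} requires one of the fixed cliques $K_0,\dots,K_{p-1}$ from $T$, which $K'$ need not be, and there is no smaller instance with the same tree to recurse on. The contrapositive stalls because the $B_j$'s are not pairwise anticomplete in $G$ (a vertex of $B_{j'}$ may lie in $K_j$ for $j<j'$), so $w(B_j)\le 1$ for all $j$ does not by itself bound the weight of components of $G[B_0\cup\dots\cup B_{p-1}]$.

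The paper sidesteps induction entirely. After reducing to $w(B_0)\le 1$, it takes the \emph{maximal} $\ell$ for which every component of $G\sm(A_\ell\cup K_\ell)$ has weight at most $1$ (this exists since $G\sm(A_0\cup K_0)=G[B_0]$) and works at level $\ell+1$ directly in $G$. The light components of $G\sm(A_\ell\cup K_\ell)$ are sorted by Lemma~\ref{l:neighborhood} into types according to whether their clique neighbourhood meets $A_{\ell+1}$ or $B_{\ell+1}$; then either $K_{\ell+1}\cup B_{\ell+1}$ witnesses~\ref{o:Xbasic}, or maximality of $\ell$ forces both sides of the split $(A_{\ell+1}\cup A\cup K,\,K_{\ell+1},\,B_{\ell+1}\cup B)$ in $G$ to have weight at least $2$, giving~\ref{o:twotwo}. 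This maximal-$\ell$ device is precisely what absorbs the ``$+1$ slop'' that your induction cannot.
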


\begin{proof}
  Suppose first that $w(B_0)\geq 2$. Then, we may assume that $w(A_0)
  \leqslant 1$ for otherwise, \ref{o:twotwo} holds with $K_i = K_0$ and the
  split $(A_0, K_0, B_0)$.  Hence, \ref{o:Xbasic} holds for $i=0$.
  So, we may assume that $w(B_0) \leqslant 1$.

  It follows that every connected component $C$ of $G\sm (A_0\cup K_0)$
  satisfies $w(C) \leqslant 1$.  Hence, it is well defined to consider the
  maximal index $\ell\in \{0, \dots, p-1\}$ such that every connected component $C$
  of $G\sm (A_\ell\cup K_\ell)$ satisfies $w(C) \leqslant 1$. If $\ell=p-1$, then
  since $B_p=A_{p-1}$ and $K_p=K_{p-1}$, \ref{o:Xbasic} holds for
  $i=p$, so we may assume $\ell<p-1$.  Therefore,
  $(A_{\ell+1}, K_{\ell+1}, B_{\ell+1})$ is a split of a clique cutset of
  $G_{\ell+1}=G[A_\ell\cup K_\ell]$.  Since, by Lemma \ref{l:neighborhood}, for every connected component $C$ of $G\sm (A_\ell\cup K_\ell)$, $N_G(C)$ is a clique, there are three types of
  such components~$C$:

  \begin{itemize}
  \item {\it type A:} connected components that have neighbors in $A_{\ell+1}$ but no
    neighbor in $B_{\ell+1}$ (possibly in $K_{\ell+1}$);
 \item {\it type B:} connected components that have neighbors in $B_{\ell+1}$ but no
   neighbor in $A_{\ell+1}$ (possibly in $K_{\ell+1}$);
 \item {\it type K:} connected component whose neighborhood (possibly empty) is
   included in $K_{\ell+1}$.
 \end{itemize}

 We denote by $A$ (resp.\ $B$, $K$) the union of all connected
 components of type A (resp.\ B, K).

 The connected components of $G\sm (K_{\ell+1} \cup B_{\ell+1})$ are
 the connected components of type B or K (that all have weight at most
 1) and the connected components of $G[A\cup A_{\ell+1}]$.  Therefore,
 unless $K_{\ell+1} \cup B_{\ell+1}$ satisfies \ref{o:Xbasic}, we may
 assume that $w(A\cup A_{\ell+1})\geq 2$.

 The connected components of $G\sm (A_{\ell+1} \cup K_{\ell+1})$ are the
 connected components of type A or K (that all have weight at most 1)
 and the connected components of $G[B\cup B_{\ell+1}]$.  Therefore, by
 the maximality of $\ell$, we know that $w(B\cup B_{\ell+1})\geq 2$.

 Now, we observe that $(A_{\ell+1} \cup A \cup K, K_{\ell+1}, B_{\ell+1} \cup
   B)$ is the split of a clique cutset of $G$ that satisfies \ref{o:twotwo}.
\end{proof}

\begin{theorem}\label{k-IndPaths-Cliques}
  Let $\mathcal G$ be a class that is
  $\mathcal G_{\textsc{basic}}$-decomposable using clique
  cutsets. Furthermore, let us assume that there is an
  $\mathcal O(n^c)$-time algorithm for $k$-\textsc{Induced Disjoint
    Paths} problem for graphs in $\mathcal G_{\textsc{basic}}$, where
  $c\geq 1$ is a constant (that does not depend on $k$). Then there is
  an algorithm that solves in $\mathcal O(n^{2k+c})$ time the
  $k$-\textsc{Induced Disjoint Paths} problem for every instance
  $(G, \mathcal W)$ such that $G\in \mathcal G$.
\end{theorem}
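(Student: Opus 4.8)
The plan is to build a clique cutset decomposition tree $T$ of $G$ via Theorem~\ref{th:tarjan}, and then reduce the problem on $G$ to polynomially many instances of the problem on the basic leaves $G[K_i\cup B_i]$, using Lemma~\ref{l:twoCases} as the structural engine. The key idea is that a solution $\mathcal P$ to $(G,\mathcal W)$ uses only finitely many vertices of each clique cutset $K_i$ that appears in $T$: since the paths of $\mathcal P$ are chordless and pairwise non-adjacent, and $K_i$ is a clique, at most two vertices of $K_i$ can lie on $\bigcup \mathcal P$ (a chordless path meets a clique in at most two vertices, and two distinct paths of the solution cannot both meet a clique, as that would create an edge between them). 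So the intersection of a hypothetical solution with $\bigcup_i K_i$ is "small".

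The algorithm then proceeds as follows. First I would guess, over all $\mathcal O(n^{2k})$ choices, a set $F\subseteq V(G)$ with $|F|\le 2k$ that is meant to be $(\bigcup \mathcal P)\cap(\text{all }K_i)$ — more precisely, I would apply Lemma~\ref{l:twoCases} with the weight function $w$ that assigns weight $1$ to each vertex in $W\cup F$ and $0$ elsewhere (so the total weight is at most $4k$, a constant). Outcome~\ref{o:twotwo} of Lemma~\ref{l:twoCases} gives a clique cutset $K_i$ with a split $(A,K_i,B)$ where $w(A)\ge 2$ and $w(B)\ge 2$: in that case the weighted vertices — in particular the terminals and the guessed cutset-vertices — are genuinely split by $K_i$, and since a solution can route through $K_i$ using at most the two vertices on each path that already lies in $F$, we can check feasibility of routing the pairs that must cross $K_i$ and otherwise recurse/reject; one must verify that if the guess $F$ is correct this case cannot arise in a way that kills a real solution, because a real solution only uses vertices of $K_i$ that are in $F$. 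Outcome~\ref{o:Xbasic} is the good case: there is an index $i$ such that $G[K_i\cup B_i]$ is basic and every connected component $C$ of $G\sm(K_i\cup B_i)$ has $w(C)\le 1$, i.e. contains at most one weighted vertex.

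In the good case, the reduction to the basic leaf $X=K_i\cup B_i$ works by contracting each attached component. By Lemma~\ref{l:neighborhood}, every connected component $C$ of $G\sm X$ has $N_G(C)=N_G(C)\cap X$ equal to a clique $K_C\subseteq X$. A chordless path of a solution that enters such a component $C$ must enter and leave through $K_C$; but $C$ contains at most one weighted vertex, and all terminals are weighted, so such a "detour" through $C$ can only be used to connect two vertices of $K_C$ (a shortcut within a clique, hence useless for a chordless solution) unless $C$ contains exactly one terminal, in which case the detour is forced and can be absorbed. Thus, after the guess $F$, one can replace each component $C$ by a single representative vertex (or a short gadget path handling the at-most-one terminal it may contain), obtaining an instance on a graph $G'$ with $V(G')\subseteq X$ up to $\mathcal O(n)$ gadget vertices, where $G'$ is still basic (basic classes are closed under induced subgraphs, and the gadgets are flat paths which, being segments of line-graph skeletons or of $\mathcal B$-structure, keep us in $\mathcal G_{\textsc{basic}}$ — this is where one leans on the specific closure properties of $\mathcal B$). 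Then invoke the assumed $\mathcal O(n^c)$-time basic algorithm on $(G',\mathcal W')$.

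**Where the difficulty lies.** The main obstacle is the bookkeeping in the "good case": showing that the gadget replacement is \emph{sound and complete} — that $(G',\mathcal W')$ has a solution iff $(G,\mathcal W)$ has one \emph{consistent with the guess $F$}, and that some guess $F$ is consistent with a genuine solution whenever one exists. Completeness needs the at-most-two-vertices-of-a-clique observation applied uniformly to \emph{all} cutsets $K_i$ simultaneously, which is exactly what the single weight function of total weight $\le 4k$ together with Lemma~\ref{l:twoCases} packages; soundness needs that detours through low-weight components cannot help a chordless non-adjacent solution, which follows from $N_G(C)$ being a clique. The running time is $\mathcal O(n^{2k})$ guesses times the cost of one basic call plus cleanup, giving $\mathcal O(n^{2k+c})$ as claimed, after noting the decomposition tree and all the per-guess preprocessing (computing components, their clique neighborhoods, building $G'$) take time polynomial in $n$ with exponent at most $c$ absorbed, or a fixed small constant dominated by $2k+c$ since $c\ge 1$.
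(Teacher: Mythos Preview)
Your proposal has a genuine gap: the bound $|F|\le 2k$ is false. You claim that $F=(\bigcup\mathcal P)\cap\bigcup_i K_i$ has at most $2k$ vertices, arguing that a chordless path meets a clique in at most two vertices and distinct solution paths cannot both meet a clique. Both observations are correct \emph{for a fixed} $K_i$, giving $|(\bigcup\mathcal P)\cap K_i|\le 2$. But the decomposition tree has up to $\Theta(n)$ distinct cliques $K_i$, and a single chordless path can traverse many of them, picking up two fresh vertices each time. So $|(\bigcup\mathcal P)\cap\bigcup_i K_i|$ can be as large as $\Theta(n)$, and your $\mathcal O(n^{2k})$ guess for $F$ collapses. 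Everything downstream (feeding $W\cup F$ into Lemma~\ref{l:twoCases}, the ``correct guess'' argument for completeness) relies on this bound.

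There is a second, independent problem: your gadget replacement in the good case need not stay in $\mathcal G_{\textsc{basic}}$. The theorem is stated for an arbitrary hereditary class $\mathcal G$ decomposed by clique cutsets over an arbitrary $\mathcal G_{\textsc{basic}}$; you cannot appeal to ``closure properties of $\mathcal B$'' here. The paper avoids this entirely. In Case~2 it does \emph{not} add gadgets: it deletes components with no terminal, and for each component $C$ containing a single terminal $s_i$ (say), it simply guesses a replacement terminal $s_i'\in N_G(C)\subseteq K_\ell\cup B_\ell$ (one vertex per terminal, hence at most $n^{2k}$ guesses total). The resulting instance lives in the \emph{induced subgraph} $G[K_\ell\cup B_\ell]$, which is basic by definition, and the $\mathcal O(n^c)$ oracle applies directly.

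For Case~1 the paper uses induction on $k$ rather than guessing. Lemma~\ref{l:twoCases} is applied with weight~1 on $W$ only. If some $K_\ell$ splits the terminals with at least two on each side, then since at most one solution path can touch $K_\ell$, at most one pair $(s_j,t_j)$ straddles the cut; after enumerating $\mathcal O(n^2)$ ways that this crossing path can use $K_\ell$, both sides carry strictly fewer than $k$ pairs, and the inductive bound $T(n,k')\le q\,3^{k'}n^{2k'+c}$ closes the recursion. Your treatment of this case (``check feasibility \ldots\ and otherwise recurse/reject'') does not set up such an induction and leaves the running time unaccounted for.
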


\begin{proof}
  Our proof is by induction on $k$. If $k=1$, then we can solve our
  problem in time $\mathcal O(n+m)=\mathcal O(n^2)$. So, we may assume
  that $k\geq 2$.

  We build a clique cutset decomposition tree $T$ for $G$. Let us
  assume that it is of depth $p$. By Theorem \ref{th:tarjan}, this can
  be done in time $\mathcal O(nm)=\mathcal O(n^3)$. Clearly, we may
  assume that $p\geq 1$.  We give weight 1
  to every vertex in $W$, weight 0 to every vertex in $V(G)\sm W$, and
  then we apply Lemma~\ref{l:twoCases}. This leads to two cases.

  \medskip
  \noindent{\bf Case 1:} For some $\ell\in\{0,1, \dots, p-1\}$, $K_\ell$ is a
  clique cutset of $G$ for which there exists a split $(A, K_\ell, B)$
  such that $|A\cap W|\geq 2$ and $|B\cap W|\geq 2$.
  \medskip

  Note that this situation can be detected in time $\mathcal O(n^3)$ by computing
  connected components of $G\setminus K_\ell$, for each $\ell\in\{0,1, \dots, p-1\}$ (by Theorem \ref{th:tarjan}, $p=\mathcal O(n)$).

  We set $K=K_\ell$. Note that no two paths from a solution of $(G, \mathcal W)$
  can have a vertex in $K$, so if for some $i\neq j$ both $\{s_i,t_i\}$ and $\{s_j,t_j\}$ have a non-empty intersection with both $A\cup K$ and $B\cup K$, then we return NO
  and stop the algorithm. Hence, we may assume that at most one pair from $\mathcal W$ has non-empty intersection with both $A\cup K$ and $B\cup K$.

  Let $\mathcal W_A=\{(s_i,t_i)\,|\,s_i,t_i\in A\}$ and
  $\mathcal W_B=\{(s_i,t_i)\,|\, s_i,t_i\in B\}$.  By the previous remark and the assumption in this case, both $\mathcal W_A$ and $\mathcal W_B$ are non-empty and $|\mathcal W\sm (\mathcal W_A\cup\mathcal W_B)|\leqslant 1$.

  We observe that for every solution $\mathcal P$ of $(G, \mathcal W)$ at most two
  vertices of $K$ are contained in paths of $\mathcal P$, and if two vertices of $K$ are members of these paths, then they are in the same path of $\mathcal P$.

  We now build several pairs of instances of \textsc{Induced Disjoint Paths} problem that we use to solve $(G,\mathcal W)$.  The construction depends on $|\mathcal W\sm (\mathcal W_A\cup\mathcal W_B)|$ and the position of vertices from the pair from $\mathcal W\sm (\mathcal W_A\cup\mathcal W_B)$ (if $|\mathcal W\sm (\mathcal W_A\cup\mathcal W_B)|=1$). By symmetry, it is enough to examine the following cases.
  \smallskip

\noindent{\it Case 1.1:} If $|\mathcal W\sm (\mathcal W_A\cup\mathcal W_B)|=0$, then these instances are:
  \begin{itemize}
  \item $(G[A], \mathcal W_A)$ and $(G[B], \mathcal W_B)$;
  \item for every $x\in K$, $(G[A\cup\{x\}], \mathcal W_A)$ and $(G[B\sm N(x)], \mathcal W_B)$;
  \item for every $x\in K$, $(G[A\sm N(x)], \mathcal W_A)$ and $(G[B\cup\{x\}], \mathcal W_B)$;
  \item for every distinct $x, y\in K$, $(G[A\cup\{x, y\}]), \mathcal W_A)$ and
    $(G[B\sm (N(x) \cup N(y))], \mathcal W_B)$;
  \item for every distinct $x, y\in K$, $(G[A\sm (N(x) \cup N(y))], \mathcal W_A)$ and
    $(G[B\cup\{x, y\}], \mathcal W_B)$.
  \end{itemize}
  \smallskip

\noindent{\it Case 1.2:} If $|\mathcal W\sm (\mathcal W_A\cup\mathcal W_B)|=1$, then w.l.o.g.\
  $\{(s_1,t_1)\}=\mathcal W\sm (\mathcal W_A\cup\mathcal W_B)$.

  In case $s_1\in A$ and $t_1\in B$, we build the following pairs of instances:
  \begin{itemize}
  \item for every $x\in K$, $(G[A\cup\{x\}], \mathcal W_A\cup \{(s_1, x)\})$
    and $(G[B\cup\{x\}], \mathcal W_B \cup \{(x, t_1)\})$;
  \item for every distinct $x, y\in K$, $(G[(A\sm N(y))\cup\{x\}],\mathcal W_A\cup\{(s_1, x)\})$ and $(G[(B\sm N(x))\cup\{y\}], \mathcal W_B\cup \{(y, t_1)\})$.
  \end{itemize}

  In case $s_1 \in A$ and $t_1 \in K$, we build the following pairs of instances:
  \begin{itemize}
   \item $(G[A\cup\{t_1\}], \mathcal W_A\cup \{(s_1, t_1)\})$
    and $(G[B\sm N(t_1)], \mathcal W_B)$;
  \item for every $x\in K$, $(G[A\cup\{x\}], \mathcal W_A\cup \{(s_1,x)\})$
    and $(G[B\sm (N(x) \cup N(t_1))], \mathcal W_B)$.
  \end{itemize}

  In case $s_1,t_1\in K$, we build the following pair of instances:
  \begin{itemize}
  \item $(G[A\sm(N(s_1)\cup N(t_1))], \mathcal W_A)$ and $(G[B\sm(N(s_1)\cup N(t_1))], \mathcal W_B)$.
 \end{itemize}

 In each case it is straightforward to check that $(G, \mathcal W)$ has a
 solution if and only if for some pair of instances that we built  both
 of them have a solution. So, we run recursively our
 algorithm for all these instances.

  \medskip
  \noindent{\bf Case 2:} For some $\ell\in\{0,1,\dots, p\}$, every connected component $C$
  of $G\sm (K_\ell\cup B_\ell)$ satisfies $|C \cap W| \leqslant 1$.
  \medskip

  Note that this case can be detected in time $\mathcal O(n^3)$ by computing
  connected components of $G\sm (K_\ell\cup B_\ell)$, for $l\in\{0,1,\dots,p\}$
  (by Theorem \ref{th:tarjan}, $p=\mathcal O(n)$).

  We first delete all connected components $C$ of
  $G\sm (K_\ell\cup B_\ell)$ that satisfies $|C \cap W|=0$.  This
  is correct because a path in a solution for $(G,\mathcal W)$ cannot
  contain a vertex from such a connected component (by Lemma \ref{l:neighborhood}).

  Now, for each connected component $C$ of $G\sm (K_\ell\cup B_\ell)$ (that is left after previous deletions) there exists a unique
  $s_i$ or $t_i$ in $C$ and we set: $S_i=N(C)$ if $s_i\in C$, and $T_i=N(C)$ if
  $t_i\in C$. Let $\mathcal I_S\subseteq\{1,2,\ldots,k\}$ (resp.\ $\mathcal I_T\subseteq\{1,2,\ldots,k\}$) be the set of all $i$ for which
  $S_i$ (resp.\ $T_i$) is defined.

  Choose $s_i'\in S_i$, for $i\in\mathcal I_S$, and $t_i'\in T_i$, for $i\in \mathcal I_T$,
  and let $s_i'=s_i$ for $i\in\{1,2,\dots,k\}\sm\mathcal I_S$, and $t_i'=t_i$ for $i\in\{1,2,\dots,k\}\sm\mathcal I_T$  (note that $i\in\{1,2,\dots,k\}\sm\mathcal I_S$ implies $s_i\in K_\ell\cup B_\ell$). Furthermore, let $\mathcal W'=\{(s_i',t_i')\,|\,i\in\{1,2,\dots,k\}\}$. It is now straightforward to check
  that $(G, \mathcal W)$ has a solution if and only if $(G[K_\ell \cup B_\ell], \mathcal W')$ has a solution for at least one $\mathcal W'$ constructed in this way. Note that the
  graph $G[K_\ell \cup B_\ell]$ is basic.

\medskip
\noindent{\bf Complexity analysis}
\smallskip

Suppose that for some constant $q\geq 1$, the algorithm for the basic class
runs in time at most $qn^c$.  We claim that our algorithm then runs in
time at most $T(n,k) \leqslant q 3^k n^{2k+c} = \mathcal O(n^{2k+c})$
(because $k$ is not part of the instance).

If we are in Case 2, this is direct, since we call at most $n^{2k}$ times
the algorithm for the basic class.

If we are in Case 1, we first observe
that we run recursively the algorithm at most $n^2$ times on each
side. Let $k_A=|\mathcal W_A|$ and $k_B=|\mathcal W_B|$. Then $1\leqslant k_A, k_B \leqslant k-1$.

Now, in Case 1.1 the running time $T(n, k)$ satisfies:
\begin{align*}
  T(n, k)  & \leqslant n^2 T(n, k_A) + n^2 T(n, k_B) + n^3\\
           & \leqslant q n^2 3^{\max\{k_A,k_B\}}  [n^{2k_A +c} + n^{2k_B +c} + n]\\
           & \leqslant q n^2 3^{k-1}  [n^{2k +c - 2} + n^{2k +c - 2} + n]\\
           & \leqslant q  3^{k}  n^{2k +c}.
\end{align*}
In Case 1.2 we have $k_A, k_B \leqslant k-2$ and the running time $T(n, k)$ satisfies:
\begin{align*}
  T(n, k)  & \leqslant n^2 T(n,k_A+1) + n^2 T(n,k_B+1) + n^3\\
           & \leqslant q n^2 3^{\max\{k_A+1,k_B+1\}}  [n^{2k_A+2+c} + n^{2k_B+2+c} + n]\\
           & \leqslant q n^2 3^{k-1}  [n^{2k +c - 2} + n^{2k +c - 2} + n]\\
           & \leqslant q  3^{k}  n^{2k +c},
\end{align*}
which concludes our proof.
\end{proof}


\subsection{Basic graphs}
\label{sub:basic}

In this section we provide a polynomial-time algorithm that solves the
problem $(G,\mathcal W)$ for graphs $G\in\mathcal B$.

\begin{lemma}\label{k-IndPathsInBasic}
For a fixed integer $k$, there is an algorithm with the following specifications:
\begin{description}
\item[ Input:] A graph $G\in\mathcal B$ and a set of pairs of vertices
  $\mathcal W=\{(s_1,t_1),(s_2,t_2),\ldots,(s_k,t_k)\}$ from $G$, such that all $2k$ vertices are distinct and the only possible edges between these vertices are of the form $s_it_i$, for some $1\leqslant i\leqslant k$.
\item[ Output:]
YES if the problem $(G,\mathcal W)$ has a solution, and NO otherwise.
\item[ Running time:] $\mathcal O(n^5)$.
\end{description}
\end{lemma}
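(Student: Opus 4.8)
The plan is to adapt the approach of Lemma~\ref{k-in-a-cycle-basic}, together with the line-graph reduction of~\cite{IDP-clawFPT}: isolate the special clique $K$, reduce to instances whose host graph is a line graph of a triangle-free chordless graph, and reduce each of those to $2^{2k}$ ordinary \textsc{Disjoint Paths} problems solved by Theorem~\ref{Th:RS}. \emph{Recovering the skeleton.} We may assume $G$ is connected. Recall that $G\in\mathcal B$ is $(\text{wheel},\text{diamond})$-free and that the center of every claw of $G$ (and hence of any induced subgraph of $G$) lies in the special clique $K$. Exactly as in the proof of Lemma~\ref{k-in-a-cycle-basic}, in time $\mathcal O(n^2m)\le\mathcal O(n^4)$ we compute the set $C$ of all centers of claws of $G$; since $G$ is diamond-free, if $|C|\ge 2$ then $C$ lies in a unique maximal clique, which we take for $K$ (and $K=C$ otherwise), so that $G\setminus K$ is $(\text{wheel},\text{diamond},\text{claw})$-free, hence, by Lemma~2.4 of~\cite{twf-p1}, equals $L(R)$ for a triangle-free chordless graph $R$ computable in $\mathcal O(n+m)$ time; here $K$ is a special clique and $R$ a skeleton of $G$, and $R$ has $\mathcal O(n)$ vertices and edges (its edges being the vertices of $L(R)$).

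\emph{How a solution uses $K$, and the reduction.} In any solution $\mathcal P$ of $(G,\mathcal W)$, since $K$ is a clique and the paths of $\mathcal P$ are pairwise vertex-disjoint and anticomplete, at most one path of $\mathcal P$ meets $K$; and since that path is chordless, it has at most two vertices in $K$, consecutive on the path when there are two. In particular the set of terminals lying in $K$ is empty, or $\{s_i\}$, or $\{t_i\}$, or $\{s_i,t_i\}$ for a single index $i$ (two terminals from distinct pairs inside $K$ would be adjacent, contradicting the form of $\mathcal W$). For fixed $k$ we enumerate the $\mathcal O(n^2)$ relevant ``shapes'': no path meets $K$; or a single path enters $K$ at a vertex $u$ and leaves at a vertex $v$, where each of $u,v$ is either a terminal of $\mathcal W$ in $K$ or a vertex of $L(R)$ with a neighbour in $K$ (since each vertex of $L(R)$ has at most one neighbour in $K$, the chordless $(u,v)$-path with interior in $K$, of length $2$ or $3$, is uniquely determined when it exists); or both terminals of some pair $\{s_i,t_i\}$ lie in $K$, in which case the corresponding path is forced to be a single edge. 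For each shape we build a graph $G'$ and a pair-set $\mathcal W'$ (obtained from $\mathcal W$ by replacing each terminal lying in $K$ by the $L(R)$-vertex through which its path leaves $K$, or by deleting a pair both of whose terminals lie in $K$): $G'=L(R)$ when no path meets $K$; $G'$ obtained from $L(R)$ by adding the chordless $(u,v)$-path through $K$ and deleting every vertex of $L(R)\setminus\{u,v\}$ with a neighbour in that path's interior (the construction $G_{u,v}$ of Lemma~\ref{k-in-a-cycle-basic}; the shape is discarded if this would delete a terminal, which is readily seen to be necessary for a solution of that shape to exist); and $G'=L(R)$ with the closed neighbourhoods of the two $K$-terminals removed in the last case. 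Each $G'$ is an induced subgraph of $G$, so lies in $\mathcal B$ (hence is wheel- and diamond-free) and is claw-free, since every vertex of $K$ surviving in $G'$ has degree at most $2$ there and the center of any claw of $G'$ lies in $K$; therefore $G'$ is a line graph of a triangle-free chordless graph $R'$ on $\mathcal O(n)$ vertices, computable in $\mathcal O(n+m)$ time~\cite{lehot,rous}. A case check of the same nature as in Lemma~\ref{k-in-a-cycle-basic} then shows $(G,\mathcal W)$ has a solution if and only if $(G',\mathcal W')$ does for at least one of the $\mathcal O(n^2)$ instances built this way.

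\emph{Solving the line-graph instances, complexity, and the main obstacle.} For $G'=L(R')$, the instance $(L(R'),\mathcal W')$ reduces, as in~\cite[Lemma~3.7]{IDP-clawFPT}, to $2^{2k}$ ordinary \textsc{Disjoint Paths} instances on $R'$ (guessing, for each terminal-edge of $R'$, which of its two endvertices serves as an endvertex of the corresponding path in $R'$, plus the degenerate sub-cases where a path reduces to a vertex or a single edge); since $R'$ has $\mathcal O(n)$ vertices, each such instance is solved in time $h(k)\,\mathcal O(n^3)$ by Theorem~\ref{Th:RS}, so each line-graph instance costs $\mathcal O(n^3)$ for fixed $k$. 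In total: $\mathcal O(n^4)$ to recover the skeleton plus $\mathcal O(n^2)$ instances at $\mathcal O(n^3)$ each, i.e.\ $\mathcal O(n^5)$. The main obstacle is the middle step: one must verify that the constant-size (for fixed $k$) family of line-graph instances faithfully captures every way a solution can traverse $K$ — especially the bookkeeping when terminals lie inside $K$ — and that each surgically modified $G'$ is genuinely a line graph of a triangle-free chordless graph. The remaining ingredients (the structure of basic graphs recalled in Section~\ref{sec:decTh}, Lemma~2.4 of~\cite{twf-p1}, the line-graph reduction of~\cite{IDP-clawFPT}, and Theorem~\ref{Th:RS}) are used as black boxes.
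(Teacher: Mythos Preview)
Your proposal is correct and follows essentially the same route as the paper: the paper's proof of this lemma is a one-line pointer back to Lemma~\ref{k-in-a-cycle-basic} (``the only difference is that we do not construct $k!$ \textsc{Induced Disjoint Path} problems, but work directly with the problem $(G,\mathcal W)$''), and you have unpacked exactly that --- recover $K$ and $R$, enumerate the $\mathcal O(n^2)$ ways a solution can cross $K$ via the $G_{u,v}$ construction, and reduce each resulting line-graph instance to $2^{2k}$ ordinary \textsc{Disjoint Paths} problems on $R'$ via~\cite[Lemma~3.7]{IDP-clawFPT}. Your explicit treatment of terminals lying inside $K$ (forced by the input constraint on edges among terminals to come from a single pair) is a detail the paper leaves implicit but is handled the same way.
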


\begin{proof}
We use almost the same algorithm as in Lemma \ref{k-in-a-cycle-basic} -- the only difference is that we do not construct $k!$ \textsc{Induced Disjoint Path} problems, but work directly with the problem $(G,\mathcal W)$.
\end{proof}

With a more complicated algorithm the running time in the above lemma can be reduced to $\mathcal O(n^2m+n^3)$, but since this will not help improve the overall complexity of the algorithm in this section we do not include it here.

\subsection{2-joins and induced paths}
\label{sub:2join}

In this section we give a polynomial-time algorithm that solves the
problem $(G,\mathcal W)$ for graphs $G\in\mathcal D$. In fact, we
solve a similar problem on certain structures, called {\it o-graphs},
which allow us to use 2-joins in a more convenient way.

\begin{defeng}
  An {\it o-graph} $G_{\mathcal F,\mathcal O}$ is a triple
  $(G,\mathcal F,\mathcal O)$, where $G$ is a graph, $\mathcal F$ is a
  set of some flat paths of $G$ of length at most 7 and some vertices
  of $G$ (viewed as paths of length 0), and $\mathcal O$ is a set such
  that for each $\mathcal W\in \mathcal O$, $(G, \mathcal W)$ is an
  instance of the \textsc{Induced Disjoint Paths} problem where every
  terminal vertex is contained in a path $P\in\mathcal F$.
\end{defeng}

We say that an o-graph $G_{\mathcal F,\mathcal O}$ is {\it linkable}
if for at least one $\mathcal W\in \mathcal O$, the problem
$(G, \mathcal W)$ has a solution.

Note that an instance $(G, \mathcal W)$ of the \textsc{Induced
  Disjoint Paths} problem has a solution if and only if the o-graph
$(G, \mathcal F, \mathcal O)$ is linkable, where $\mathcal F$ is the
set of terminals of $\mathcal W$ and $\mathcal O= \{\mathcal W\}$.
So, to solve $(G, \mathcal W)$, it is enough to decide whether
$G_{\mathcal F,\mathcal O}$ is linkable.

We do this with 2-join decompositions that are performed to reduce the
problem to basic graphs. If $G\in \mathcal D$ is not basic, then it
has a 2-join and we consider a minimally sided 2-join $(X_1, X_2)$
where $X_1$ is a minimal side.  So, the block of decomposition $G^1$
(that contains $X_1$) is a basic graph, and our algorithm solves a
number of instances of the \textsc{Induced Disjoint Paths} problem in
it, to know how disjoint paths may exist through $X_1$.  We then
define an o-graph $G_{\mathcal F^2,\mathcal O^2}^2$, where $G^2$ is
the block of decomposition that contains~$X_2$.  The sets
$\mathcal F^2,\mathcal O^2$ are designed using the information gained
from the computations made in $G_1$, so that
$G_{\mathcal F,\mathcal O}$ is linkable if and only if
$G_{\mathcal F^2,\mathcal O^2}^2$ is linkable (see
Lemma~\ref{l:main-o-graphs}).  The marker path of $G^2$ (or an
extension of it) is sometimes added to $\mathcal F^2$ to record
different types of interactions of the solution paths with the 2-join.
Throughout all these steps of the algorithm, we can prove that
$|\mathcal F|$ is bounded by the initial number of terminals, and this
leads to an FPT algorithm.  In the following lemma we prove that
$|\mathcal O|$ is bounded by a function of $|\mathcal F|$.

\begin{lemma}\label{l:boundedNumber}
  Let $G_{\mathcal F,\mathcal O}$ be an o-graph, such that
  $|\mathcal F|\leq t$. Then $|\mathcal O|\leq 2^{8t}(8t)!$.
\end{lemma}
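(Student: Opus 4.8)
The plan is to count the number of possible instances $\mathcal W$ that can appear in $\mathcal O$. The key observation is that, by the definition of an o-graph, every terminal vertex of every $\mathcal W \in \mathcal O$ lies on one of the paths in $\mathcal F$. Since each path $P \in \mathcal F$ has length at most $7$, it has at most $8$ vertices, so $\bigcup_{P \in \mathcal F} V(P)$ has at most $8t$ vertices. Hence every terminal of every $\mathcal W \in \mathcal O$ comes from this fixed set $U$ of size at most $8t$, and the whole set $\mathcal O$ is determined by a collection of instances on the vertex set $U$.

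First I would fix $\mathcal W = \{(s_1,t_1), \dots, (s_k,t_k)\} \in \mathcal O$ and bound the number of possibilities for it. By the definition of the \textsc{Induced Disjoint Paths} problem, all $2k$ terminal vertices of $\mathcal W$ are distinct, so $2k \leq |U| \leq 8t$, i.e.\ $k \leq 4t$. To specify $\mathcal W$, it suffices to specify, for each vertex $u \in U$, whether $u$ is a terminal and, if so, which role it plays. I would encode this by choosing an ordered sequence: pick the (at most $8t$) vertices of $U$ that are used as terminals and pair them up into the ordered pairs $(s_i,t_i)$. A crude but sufficient bound: the number of ways to choose an ordered list of at most $8t$ terminals from $U$ (with repetition allowed, which only over-counts) and then partition that list into pairs is at most $(8t)! \cdot 2^{8t}$ — more carefully, there are at most $(8t)!$ ways to order a subset of $U$, and given the ordered list the pairing into consecutive pairs $(s_1,t_1),(s_2,t_2),\dots$ is forced, while the factor $2^{8t}$ absorbs the choice of how long the prefix of "used" vertices is and any remaining slack. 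This gives $|\{\text{possible } \mathcal W\}| \leq 2^{8t}(8t)!$.

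Finally, since $\mathcal O$ is a \emph{set} of such instances, and we have just bounded the total number of distinct instances $\mathcal W$ whose terminals lie in $U$ by $2^{8t}(8t)!$, we conclude $|\mathcal O| \leq 2^{8t}(8t)!$, as desired. The only mildly delicate point is getting the counting bookkeeping right — making sure the chosen encoding of an instance $\mathcal W$ by an ordered sequence of vertices of $U$ is injective and that the combinatorial factors $2^{8t}$ and $(8t)!$ genuinely dominate — but since the statement asks only for \emph{some} bound of this shape, a deliberately wasteful count suffices and there is no real obstacle.
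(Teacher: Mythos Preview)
Your proposal is correct and follows essentially the same approach as the paper: observe that all terminals lie in a fixed set $U$ of at most $8t$ vertices (since each of the $\leq t$ paths in $\mathcal F$ has at most $8$ vertices), and then crudely bound the number of possible instances $\mathcal W$ on $U$. The paper organizes the count slightly differently---using $2^{8t}$ for the choice of which vertices of $U$ are terminals and then $\frac{(2s)!}{2^s s!}\leq (8t)!$ for the number of perfect matchings on the chosen $2s$ vertices---which is a bit cleaner than your ordering-based encoding, but the idea is the same.
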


\begin{proof}
  Each path in $\mathcal F$ has at most 8 vertices. Hence
  $|\mathcal F|\leq t$ implies that each set in $\mathcal O$ has at
  most $4t$ pairs of vertices. So, there is at most $2^{8t}$ ways to
  choose the vertices that are going to be in the pairs of an element
  of $\mathcal O$.  Once we have chosen these vertices, say $2s$ of
  them ($2s\leq 8t$), to obtain an element of $\mathcal O$ we only
  need to group them in $s$ (disjoint) pairs. This can be done in
  $\frac{1}{s!}\binom{2s}{2}\binom{2s-2}{2}\cdots\binom{2}{2}=\frac{(2s)!}{2^ss!}$
  ways, and hence
\[|\mathcal O|\leq 2^{8t}\frac{(2s)!}{2^ss!}\leq 2^{8t}{(8t)!}.\]
\end{proof}

\begin{corollary}\label{l:BasicO-graph}
  Let $t$ be a fixed integer. If $G_{\mathcal F,\mathcal O}$ is an o-graph such
  that $G\in\mathcal B$ and  $|\mathcal F|\leq t$, then there is
  an $\mathcal O(n^5)$-time algorithm that decides whether $G_{\mathcal F,\mathcal O}$
  is linkable.
\end{corollary}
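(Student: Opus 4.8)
The plan is to reduce the problem to finitely many instances of the ordinary \textsc{Induced Disjoint Paths} problem on $G$, each of which can be solved by Lemma~\ref{k-IndPathsInBasic}. Recall that $G_{\mathcal F,\mathcal O}$ is linkable if and only if $(G,\mathcal W)$ has a solution for at least one $\mathcal W\in\mathcal O$. By Lemma~\ref{l:boundedNumber}, since $|\mathcal F|\le t$ and $t$ is fixed, we have $|\mathcal O|\le 2^{8t}(8t)!=\mathcal O(1)$. So the first step is simply to enumerate all $\mathcal W\in\mathcal O$; there are only a constant number of them.

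For each fixed $\mathcal W=\{(s_1,t_1),\dots,(s_k,t_k)\}\in\mathcal O$, the second step is to observe that $(G,\mathcal W)$ is essentially an instance of $k$-\textsc{Induced Disjoint Paths} on $G\in\mathcal B$, which is exactly what Lemma~\ref{k-IndPathsInBasic} handles in time $\mathcal O(n^5)$, except for one technicality: in the definition of an o-graph the terminal vertices need not be distinct, nor need the only edges among them be the $s_it_i$'s, whereas Lemma~\ref{k-IndPathsInBasic} assumes all $2k$ terminals are distinct with only the prescribed edges. I would dispose of this as follows. If some terminal appears in two pairs, or if two terminals from different pairs are adjacent, then (since paths in a solution must be vertex-disjoint and mutually anticomplete) the instance is trivially infeasible — unless the coincidence/adjacency is forced to be ``legal'', e.g. $s_i=s_j$ is impossible for $i\ne j$, and an edge $s_it_i$ is allowed. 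In the remaining cases one can run a brief case check (constant size, since $k\le 2t$ is bounded): detect whether $\mathcal W$ violates the distinctness/adjacency hypotheses of Lemma~\ref{k-IndPathsInBasic}, answer NO for that $\mathcal W$ if it does, and otherwise feed $(G,\mathcal W)$ directly to the algorithm of Lemma~\ref{k-IndPathsInBasic}.

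The third step is bookkeeping on running time. For each of the $\mathcal O(1)$ choices of $\mathcal W$, the preliminary consistency check costs $\mathcal O(n^2)$ (scan the $\mathcal O(1)$ terminal pairs and their adjacencies), and the call to Lemma~\ref{k-IndPathsInBasic} costs $\mathcal O(n^5)$. Summing over the constantly many $\mathcal W\in\mathcal O$ gives total running time $\mathcal O(n^5)$. We return YES if and only if some $\mathcal W$ yields a YES, which by the definition of linkability is correct.

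I do not expect a genuine obstacle here; the statement is really a packaging of Lemmas~\ref{k-IndPathsInBasic} and~\ref{l:boundedNumber}. The only point requiring a little care — and the thing I would write most carefully — is reconciling the two slightly different notions of ``\textsc{Induced Disjoint Paths} instance'' (the relaxed one allowed inside an o-graph versus the strict one in the hypothesis of Lemma~\ref{k-IndPathsInBasic}), making sure the reduction for coinciding or adjacent terminals is sound. Everything else is immediate from the fact that $|\mathcal O|$ and $k$ are bounded by functions of the fixed constant $t$.
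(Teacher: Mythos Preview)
Your approach is correct and matches the paper's one-line proof (``Follows from Lemmas~\ref{k-IndPathsInBasic} and~\ref{l:boundedNumber}''). The one caveat you raise is unnecessary: by the definition of an o-graph, each $\mathcal W\in\mathcal O$ is already required to be an instance of the \textsc{Induced Disjoint Paths} problem in the paper's strict sense (all $2k$ terminals distinct, only edges of the form $s_it_i$ among them), so Lemma~\ref{k-IndPathsInBasic} applies directly with no preliminary consistency check. A minor slip: the bound is $k\le 4t$, not $2t$, since paths in $\mathcal F$ may have up to $8$ vertices each.
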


\begin{proof}
Follows from Lemmas \ref{k-IndPathsInBasic} and \ref{l:boundedNumber}.
\end{proof}

Let $G\in\mathcal D$ be a graph that has a 2-join $(X_1, X_2)$ and
$(G,\mathcal W)$ an instance of the $k$-\textsc{Induced Disjoint
  Paths} problem. For a split $(X_1,X_2,A_1,A_2,B_1,B_2)$ of
$(X_1, X_2)$ we define:
\begin{itemize}
\item $\mathcal I_1=\{1\leqslant i\leqslant k:\,s_i,t_i\in X_1\}$;
\item $\mathcal I_2=\{1\leqslant i\leqslant k\,:\,s_i,t_i\in X_2\}$;
  \item $\mathcal J=\{1,2,\ldots,k\}\setminus(\mathcal I_1\cup\mathcal I_2)$;
  \item $\mathcal W_1'=\{(s_i,t_i)\,:\,i\in \mathcal I_1\}$;
    \item $\mathcal W_2'=\{(s_i,t_i)\,:\,i\in \mathcal I_2\}$.
  \end{itemize}

Furthermore, we may assume that for $j\in \mathcal J$, $s_j\in X_1$ and $t_j\in X_2$.

Recall that we build a block $G^j$, for $j\in\{1,2\}$, by replacing
$X_{3-j}$ with a chordless path
$P^{3-j}=a_{3-j}a_{3-j}'c_{3-j}d_{3-j}b_{3-j}'b_{3-j}$ (called the
marker path) such that $a_{3-j}$ (resp.\ $b_{3-j}$) is complete to
$A_j$ (resp.\ $B_j$). See Fig.~\ref{f:blocks}.

\newcommand{\incj}[1]{\parbox[c]{5cm}{\includegraphics[height=4cm]{#1}}}
\begin{figure}
  \begin{center}
    \begin{tabular}{ccc}
      \incj{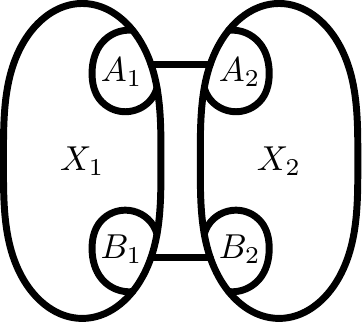}&\incj{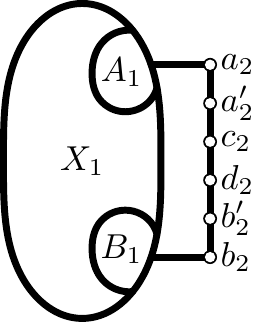}&\hspace{-1cm}\incj{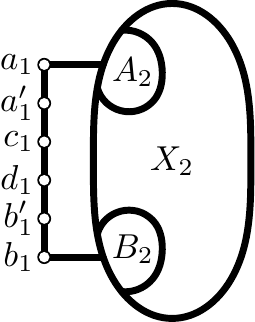}
    \end{tabular}
  \end{center}
  \caption{$G$ and its blocks of decomposition $G^1$ and $G^2$\label{f:blocks}}
\end{figure}

Next, we list a number of pairs of problems
($\mathcal S_1, \mathcal S_2)$. Problem $\mathcal S_1$ is going to be
solved on $G^1$ and problem $\mathcal S_2$ is going to be solved on
$G^2$. Each pair ($\mathcal S_1, \mathcal S_2)$ corresponds to a
possible way a solution of $(G, \mathcal W)$ interacts with the
2-join.  We aim to prove Lemma~\ref{k-IndPathsLemma} showing that
$(G,\mathcal W)$ has a solution if and only if for one of the pairs
($\mathcal S_1, \mathcal S_2)$, both $\mathcal S_1$ and $\mathcal S_2$
have a solution.  We therefore call these pairs {\it potential
  solutions} (for example, in (2.3) a potential solution is
described and we will refer to it as {\it potential solution (2.3)}).

Our definition of pairs $(\mathcal S_1,\mathcal S_2)$ depends on the
{\it type} of interaction of $\mathcal W$ with the 2-join
$(X_1,X_2)$. We list these types and the potential solutions in what
follows. (Note: it is best suited to examine the potential solutions
while reading the proof of Lemma \ref{k-IndPathsLemma}. Also, see
Fig.~\ref{f:t11}, \ref{f:t12}, \ref{f:t2} and~\ref{f:t3}.)

\newcommand{\vSp}{\parbox[c]{0cm}{\rule{0cm}{2.9cm}}}
\renewcommand{\incj}[1]{\parbox[c]{3.5cm}{\includegraphics[height=2.6cm]{#1}}}
\begin{figure}
    \begin{center}
\begin{tabular}{cccc}
(1.1)&\vSp\incj{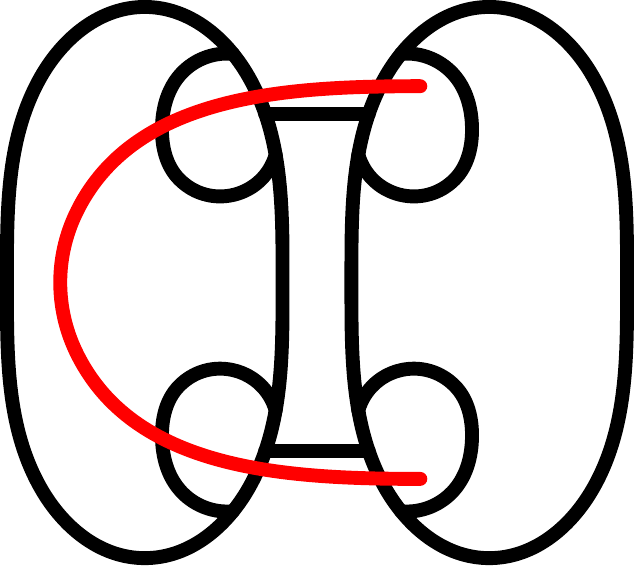}&\vSp\incj{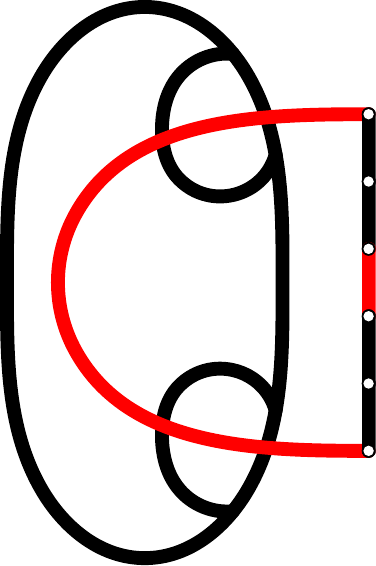}&\hspace{-1.5cm}\vSp\incj{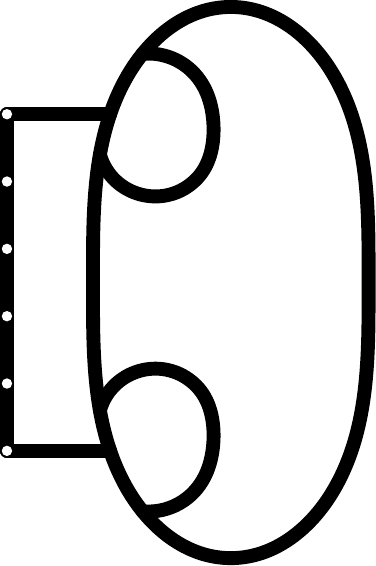}\\
(1.2)&\vSp\incj{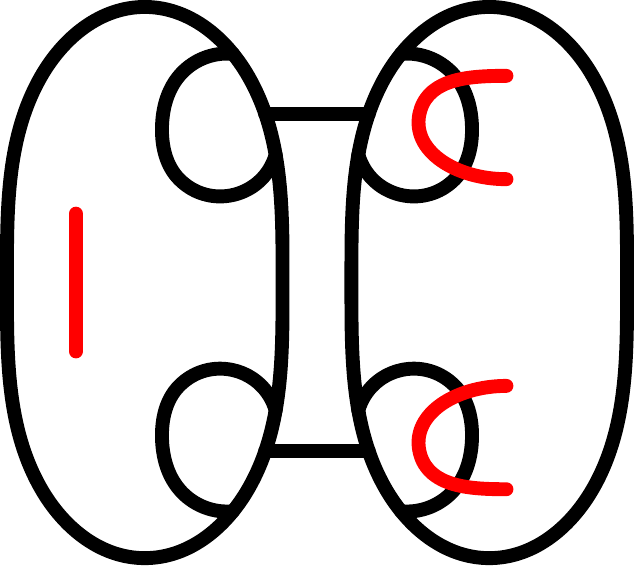}&\vSp\incj{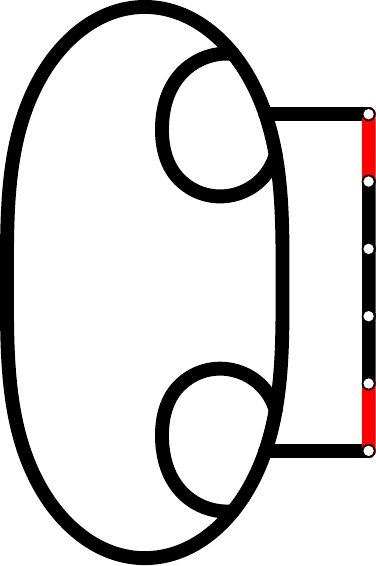}&\hspace{-1.5cm}\vSp\incj{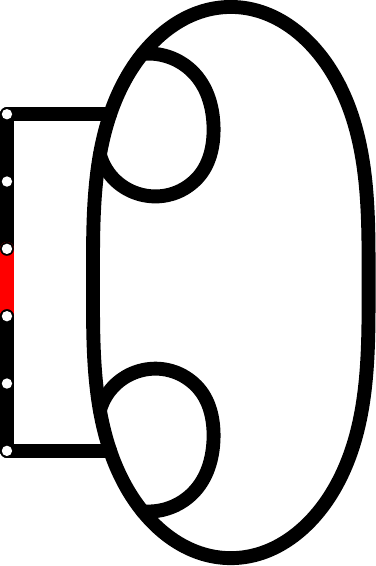}\\
(1.3)&\vSp\incj{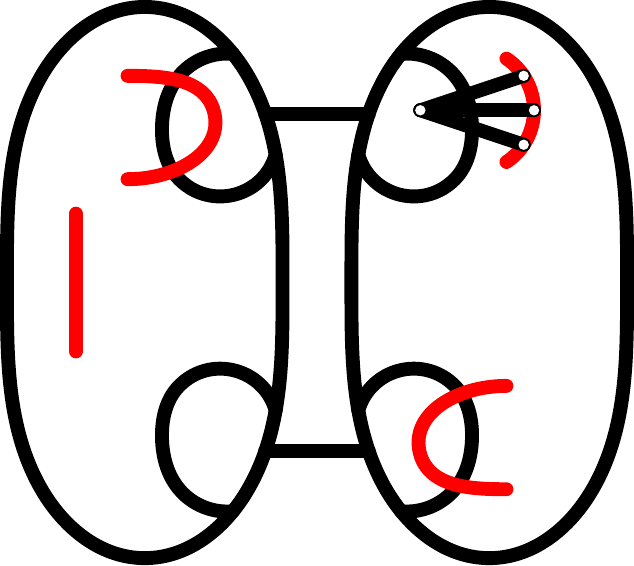}&\vSp\incj{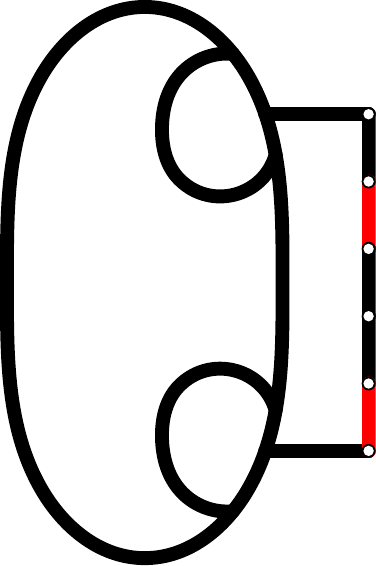}&\hspace{-1.5cm}\vSp\incj{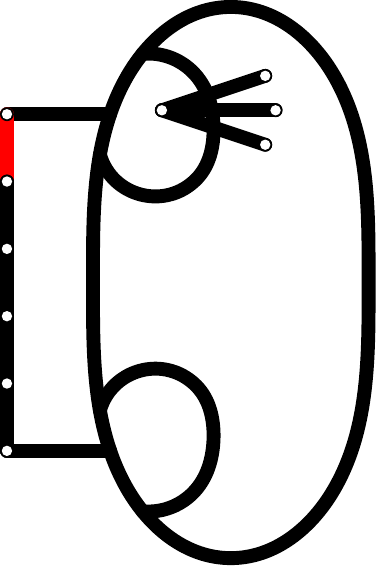}\\
(1.4)&\vSp\incj{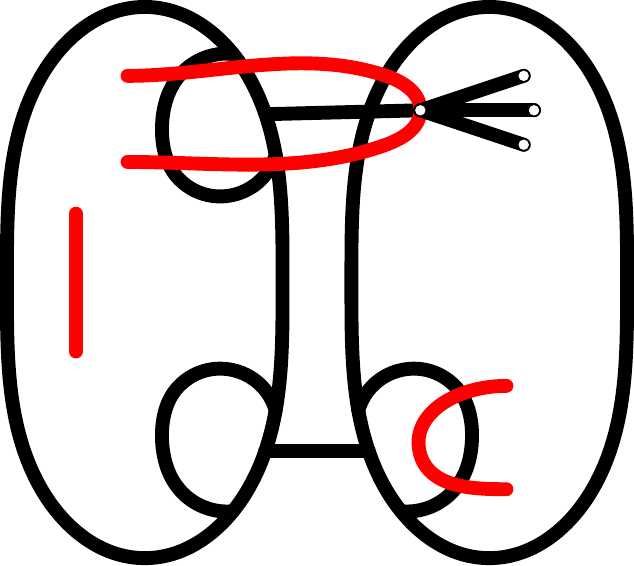}&\vSp\incj{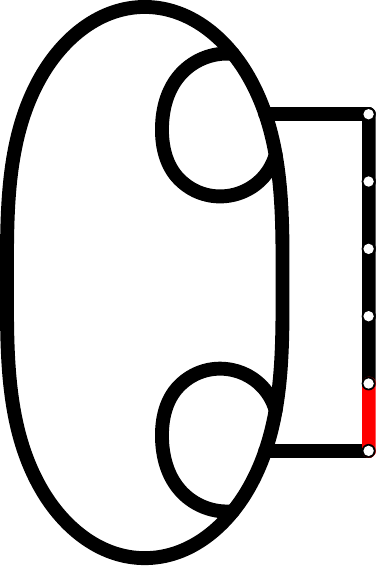}&\hspace{-1.5cm}\vSp\incj{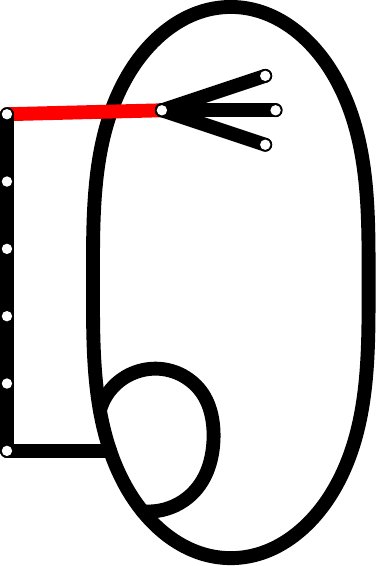}\\
(1.5)&\vSp\incj{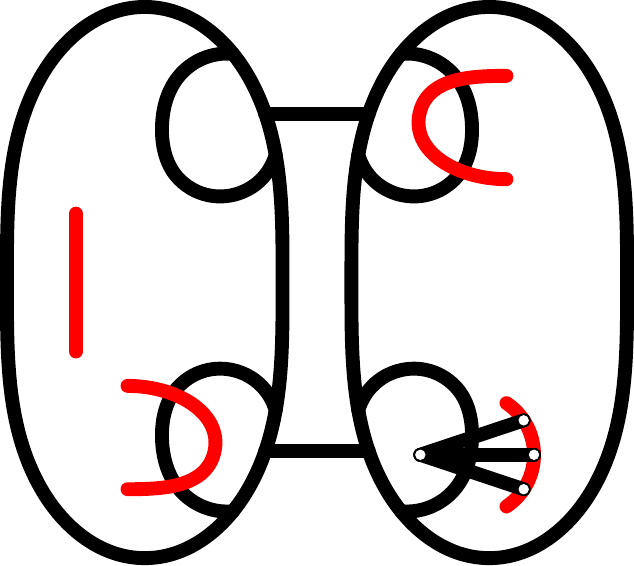}&\vSp\incj{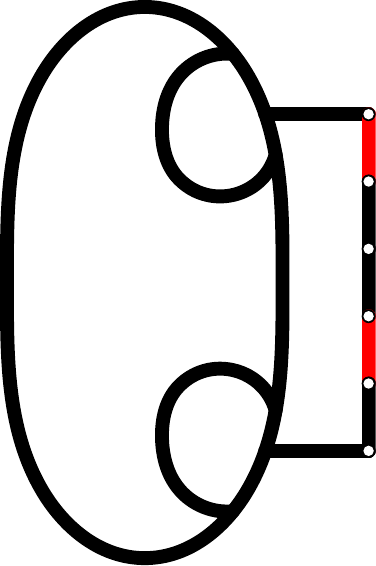}&\hspace{-1.5cm}\vSp\incj{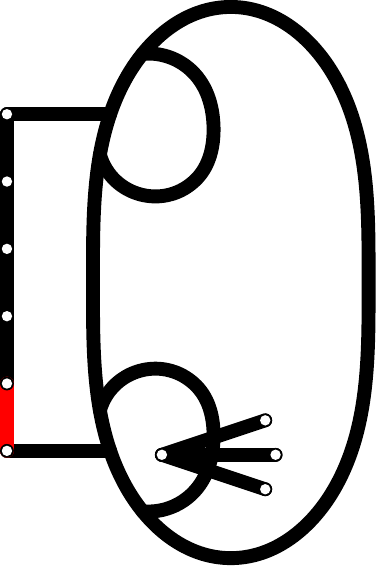}\\
(1.6)&\vSp\incj{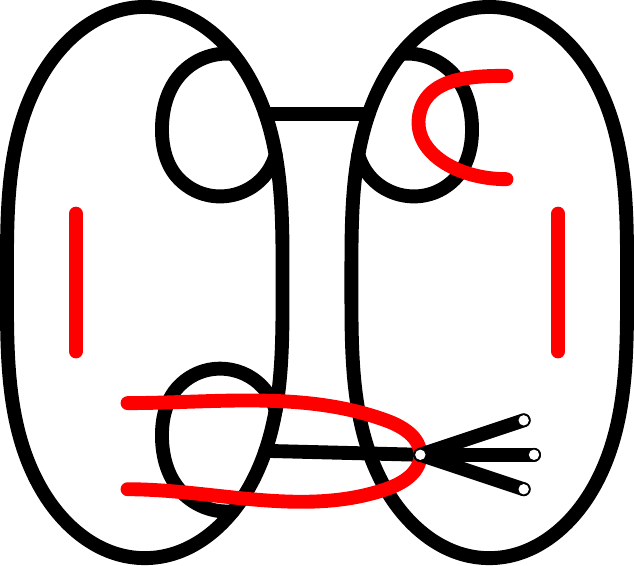}&\vSp\incj{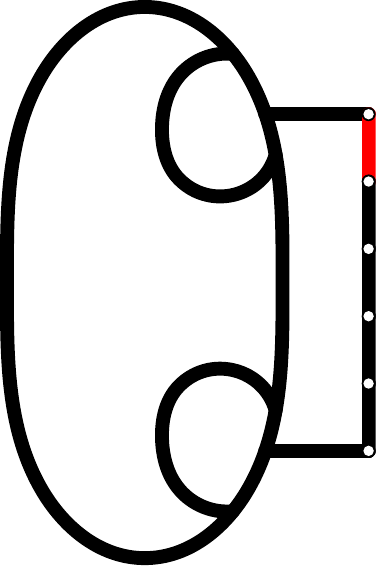}&\hspace{-1.5cm}\vSp\incj{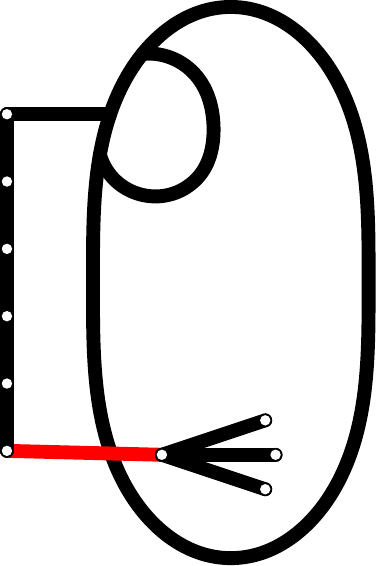}\\
\end{tabular}
  \caption{Type 1 potential solutions (1.1)--(1.6)\label{f:t11}}
  \end{center}
\end{figure}

\begin{figure}
    \begin{center}
\begin{tabular}{cccc}
(1.7)&\vSp\incj{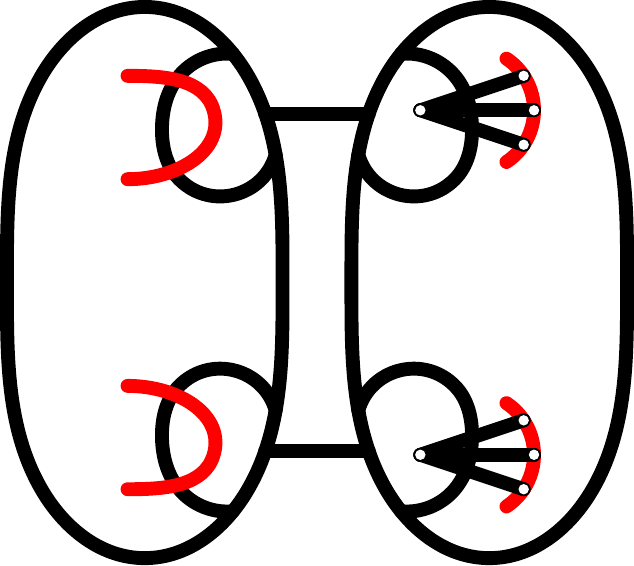}&\vSp\incj{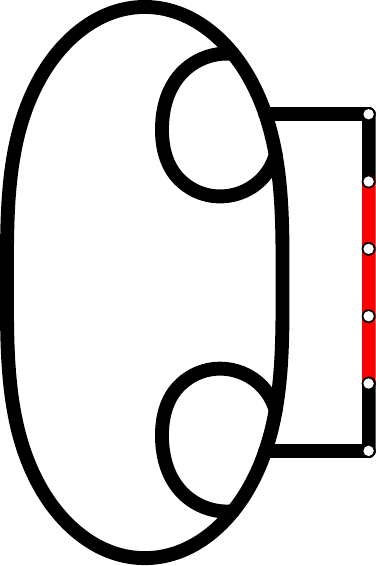}&\hspace{-1.5cm}\vSp\incj{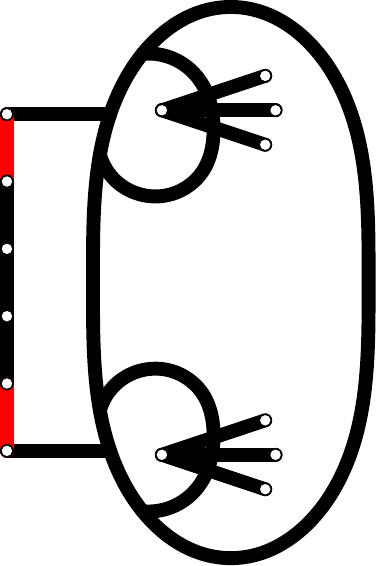}\\
(1.8)&\vSp\incj{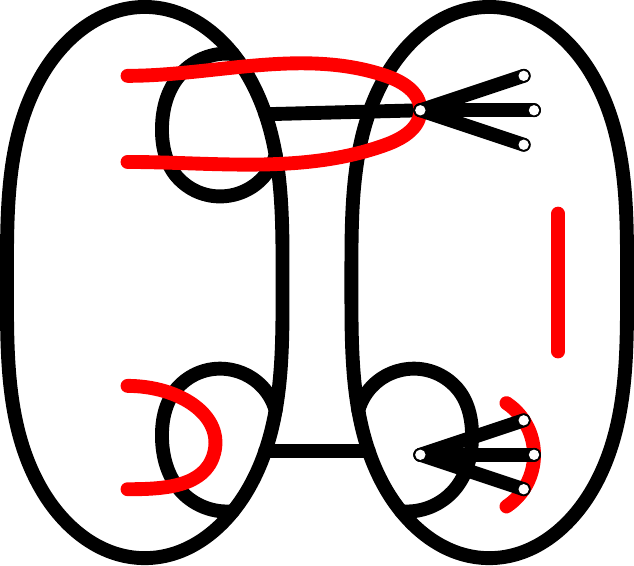}&\vSp\incj{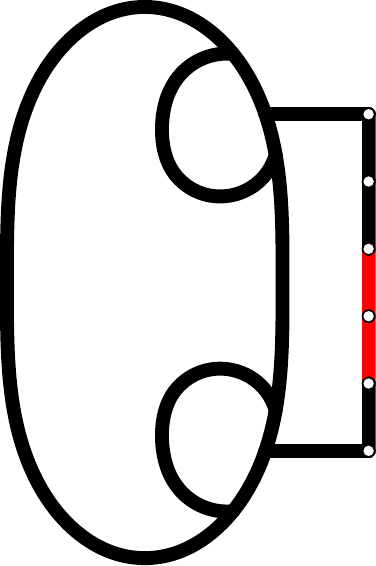}&\hspace{-1.5cm}\vSp\incj{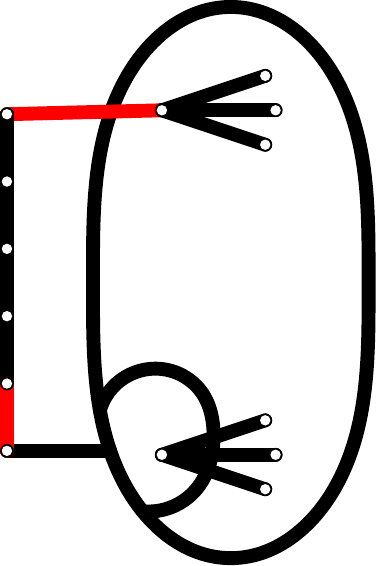}\\
(1.9)&\vSp\incj{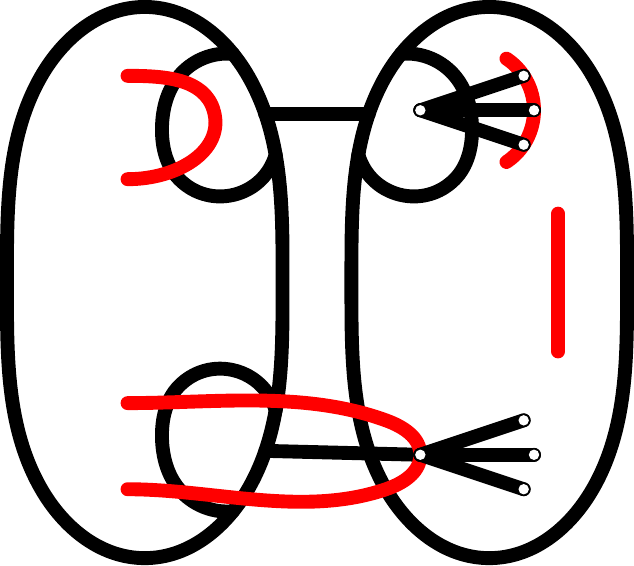}&\vSp\incj{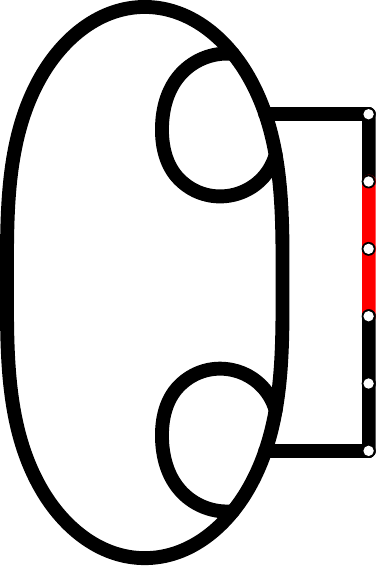}&\hspace{-1.5cm}\vSp\incj{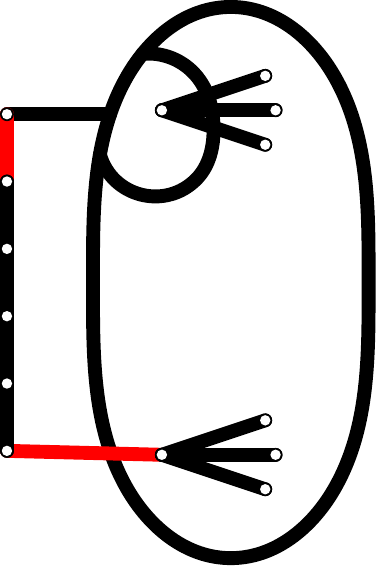}\\
(1.10)&\vSp\incj{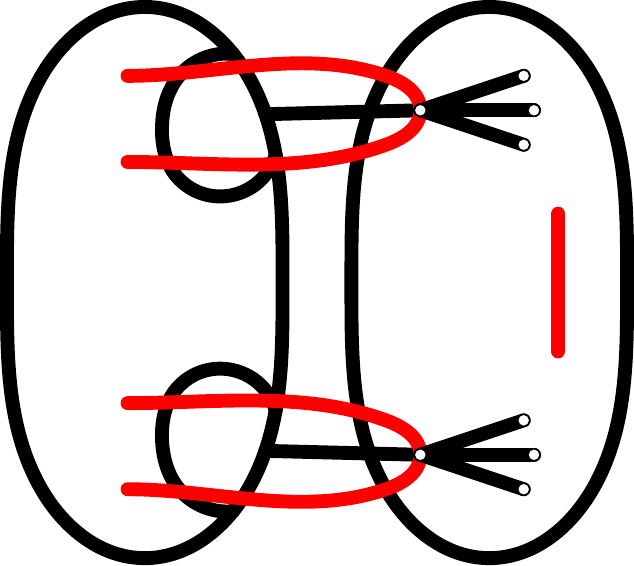}&\vSp\incj{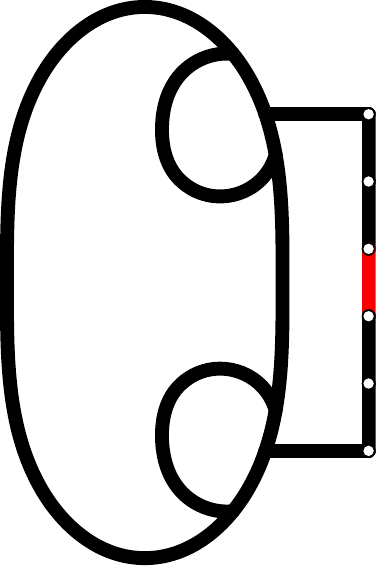}&\hspace{-1.5cm}\vSp\incj{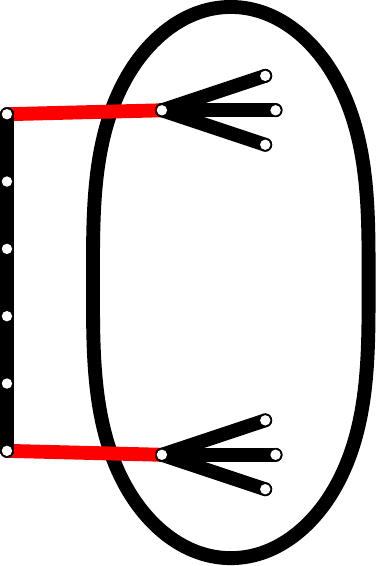}\\
(1.11)&\vSp\incj{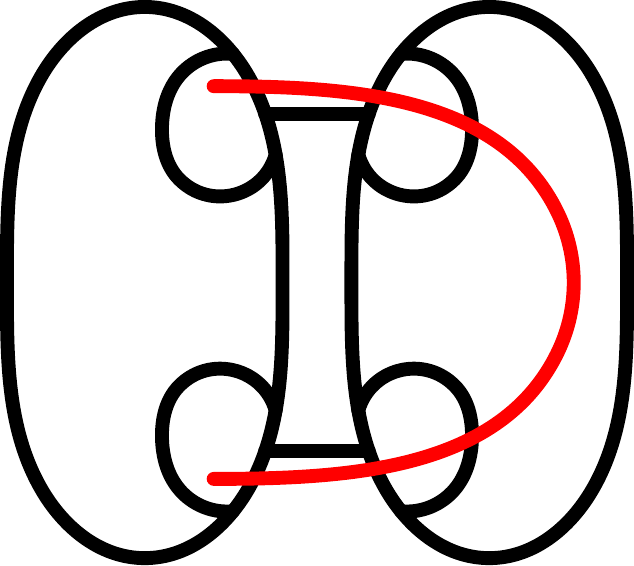}&\vSp\incj{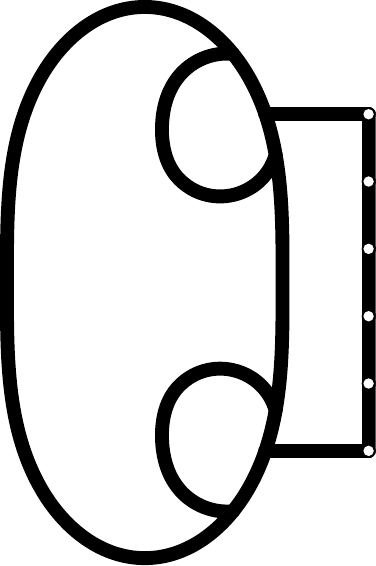}&\hspace{-1.5cm}\vSp\incj{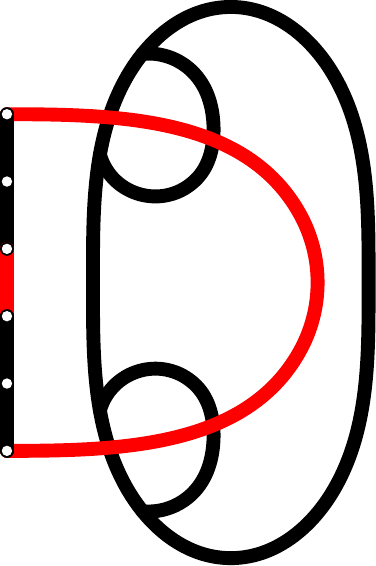}
\end{tabular}
  \caption{Type 1 potential solutions (1.7)--(1.11)\label{f:t12}}
  \end{center}
\end{figure}

\medskip

\noindent{\bf Type 1:} $\mathcal J=\emptyset$.
\begin{itemize}
  \item[(1.1)] $\mathcal S_1=(G^1,\mathcal W_1'\cup\{(a_2,b_2),(c_2,d_2)\})$,
  $\mathcal S_2=(G^2,\mathcal W_2')$;

  \item[(1.2)] $\mathcal S_1=(G^1,\mathcal W_1'\cup\{(a_2,a_2'),(b_2,b_2')\})$,
  $\mathcal S_2=(G^2,\mathcal W_2'\cup \{ (c_1,d_1)\})$;

  \item[(1.3)] $\mathcal S_1=(G^1,\mathcal W_1'\cup\{(b_2,b_2'),(a_2',c_2)\})$,
  $\mathcal S_2=(G^2,\mathcal W_2'\cup \{(a_1,a_1')\})$;

  \item[(1.4)] $A_2=\{a\}$, $\mathcal S_1=(G^1,\mathcal W_1'\cup\{(b_2,b_2')\})$,
  $\mathcal S_2=(G^2,\mathcal W_2'\cup \{(a,a_1)\})$;

  \item[(1.5)] $\mathcal S_1=(G^1,\mathcal W_1'\cup\{(a_2,a_2'),(b_2',d_2)\})$,
  $\mathcal S_2=(G^2,\mathcal W_2'\cup \{(b_1,b_1')\})$;

  \item[(1.6)] $B_2=\{b\}$, $\mathcal S_1=(G^1,\mathcal W_1'\cup\{(a_2,a_2')\})$,
  $\mathcal S_2=(G^2, \mathcal W_2'\cup \{(b,b_1)\})$;

  \item[(1.7)] $\mathcal S_1=(G^1,\mathcal W_1'\cup\{(a_2',b_2')\})$,
  $\mathcal S_2=(G^2,\mathcal W_2'\cup\{(a_1,a_1'),(b_1,b_1')\})$;

  \item[(1.8)] $A_2=\{a\}$, $\mathcal S_1=(G^1,\mathcal W_1'\cup\{(c_2,b_2')\})$,
  $\mathcal S_2=(G^2,\mathcal W_2'\cup \{(a,a_1),(b_1,b_1')\})$;

  \item[(1.9)] $B_2=\{b\}$, $\mathcal S_1=(G^1,\mathcal W_1'\cup\{(a_2',d_2)\})$,
  $\mathcal S_2=(G^2,\mathcal W_2'\cup \{(b,b_1),(a_1,a_1')\})$;

  \item[(1.10)] $A_2=\{a\}$, $B_2=\{b\}$, $\mathcal S_1=(G^1,\mathcal W_1'\cup\{(c_2,d_2)\})$,
  $\mathcal S_2=(G^2,\mathcal W_2'\cup \{(a,a_1),(b,b_1)\})$;

   \item[(1.11)] $\mathcal S_1=(G^1,\mathcal W_1')$,
   $\mathcal S_2=(G^2,\mathcal W_2'\cup\{(a_1,b_1),(c_1,d_1)\})$.
\end{itemize}

\medskip

\begin{figure}
  \begin{center}
\begin{tabular}{cccc}
(2.1)&\vSp\incj{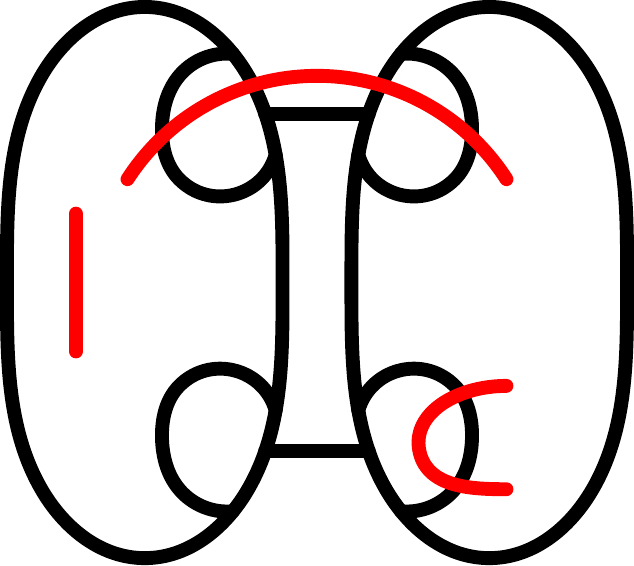}&\vSp\incj{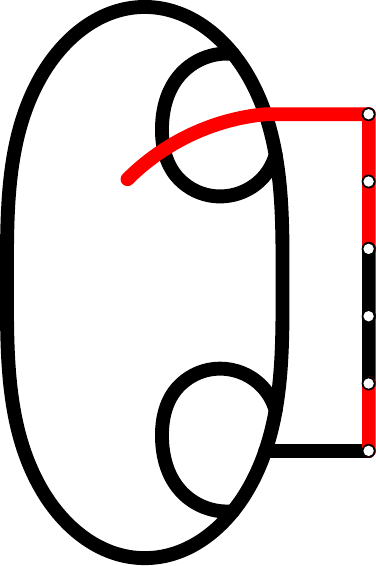}&\hspace{-1.5cm}\vSp\incj{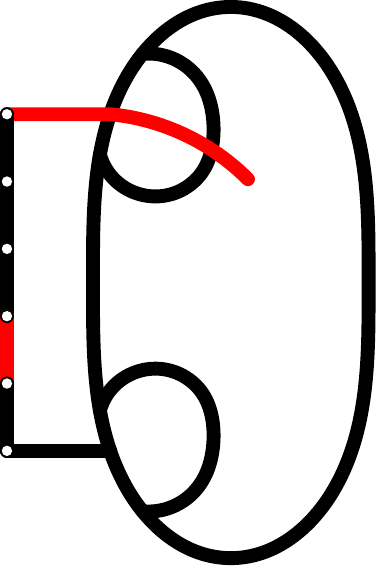}\\
(2.2)&\vSp\incj{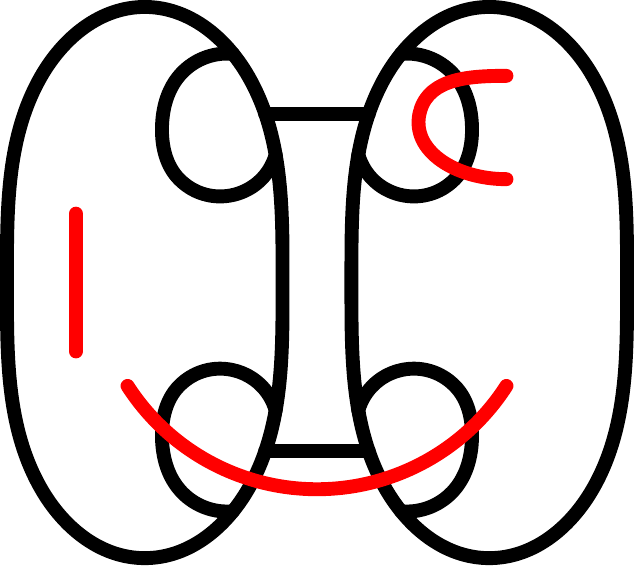}&\vSp\incj{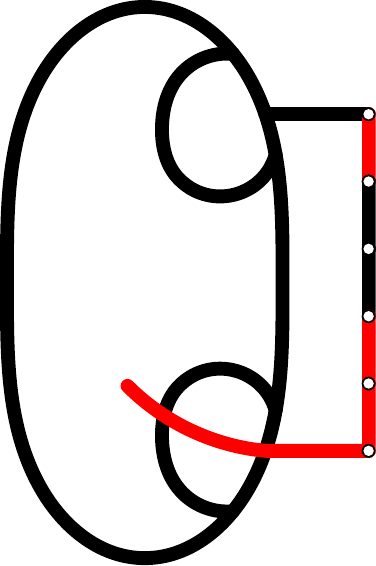}&\hspace{-1.5cm}\vSp\incj{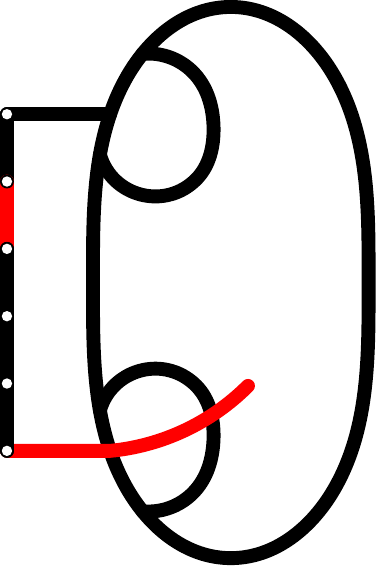}\\
(2.3)&\vSp\incj{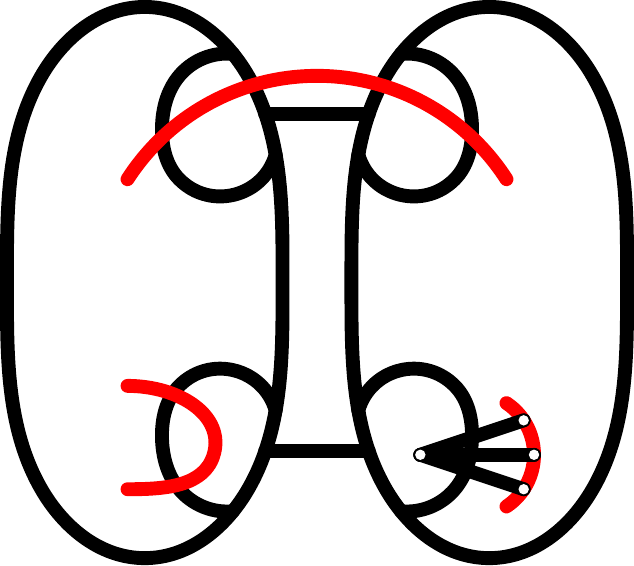}&\vSp\incj{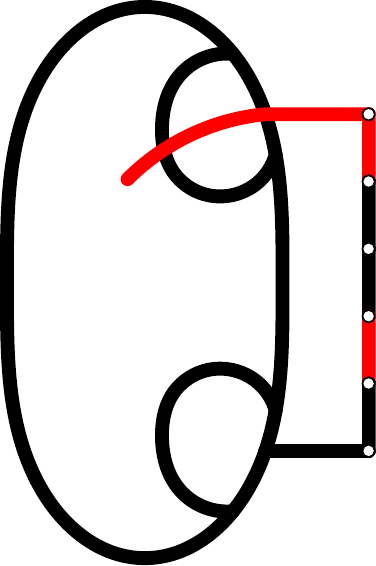}&\hspace{-1.5cm}\vSp\incj{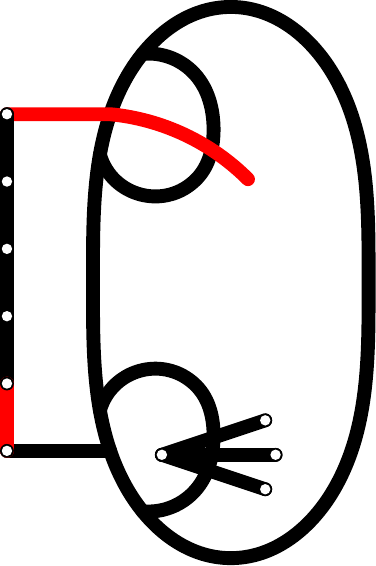}\\
(2.4)&\vSp\incj{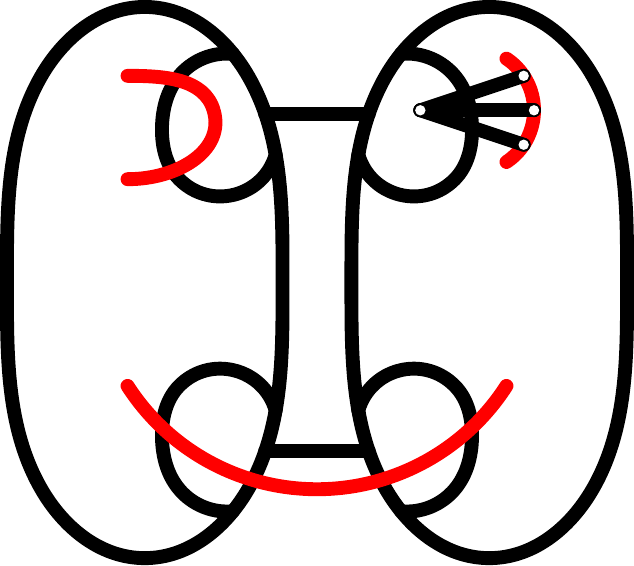}&\vSp\incj{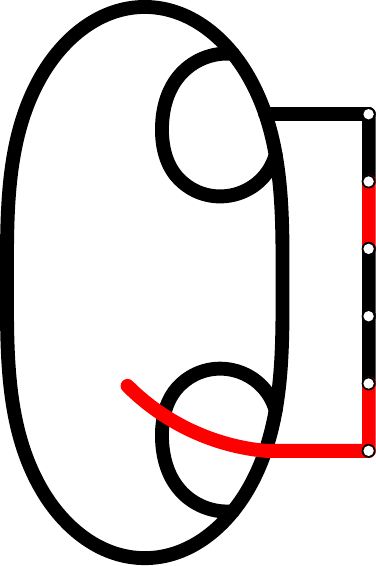}&\hspace{-1.5cm}\vSp\incj{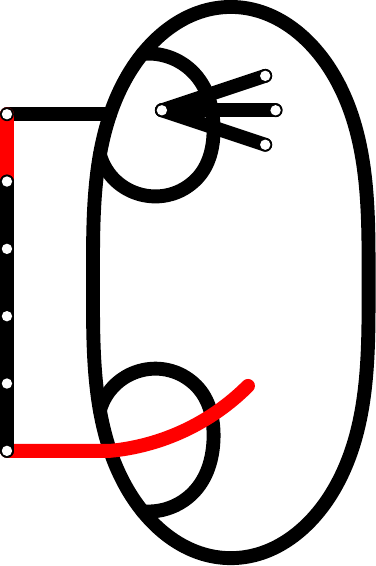}\\
(2.5)&\vSp\incj{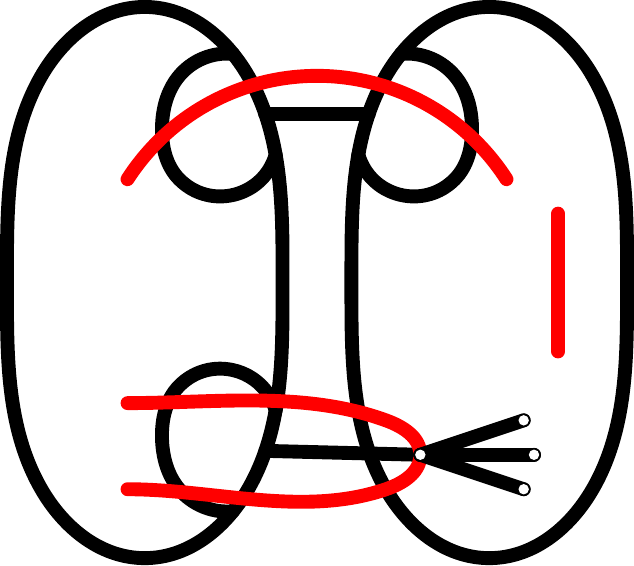}&\vSp\incj{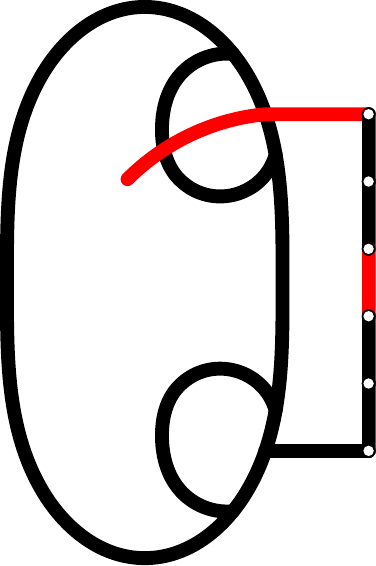}&\hspace{-1.5cm}\vSp\incj{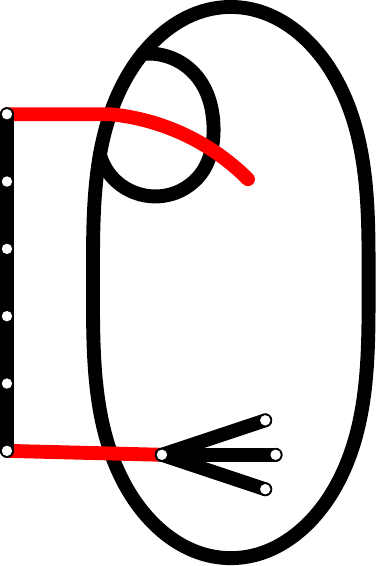}\\
(2.6)&\vSp\incj{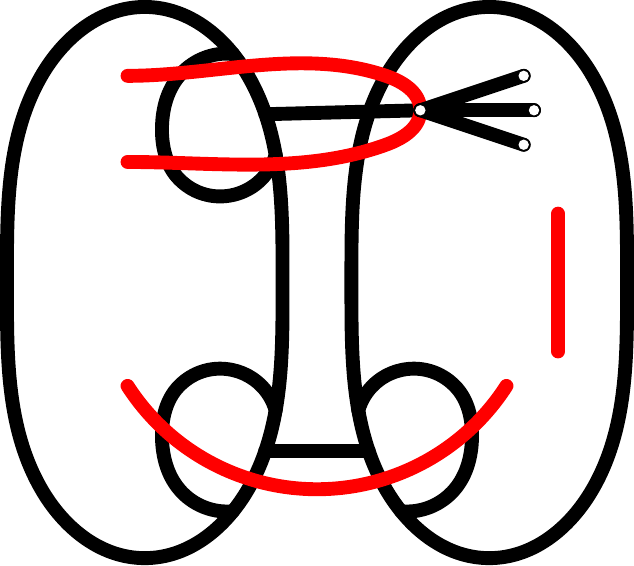}&\vSp\incj{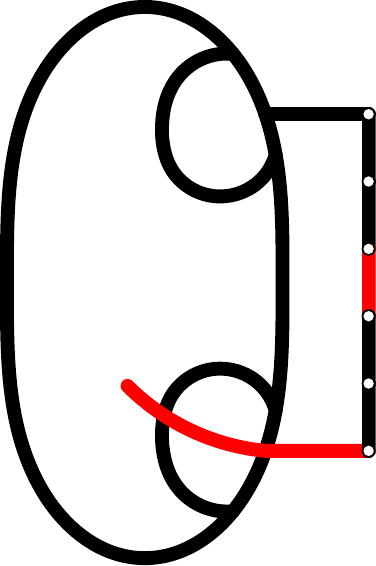}&\hspace{-1.5cm}\vSp\incj{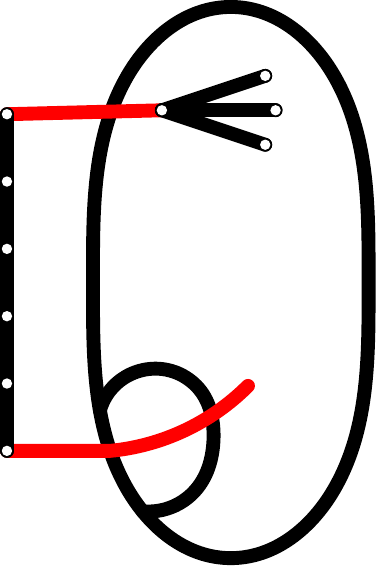}\\
\end{tabular}
  \caption{Type 2 potential solutions\label{f:t2}}
  \end{center}
\end{figure}

\noindent{\bf Type 2:} $\mathcal J=\{j_1\}$.
\begin{itemize}
  \item[(2.1)] $\mathcal S_1=(G^1,\mathcal W_1'\cup\{(s_{j_1},c_2),(b_2,b_2')\})$,
  $\mathcal S_2=(G^2,\mathcal W_2'\cup\{(a_1,t_{j_1}),(b_1',d_1)\})$;

  \item[(2.2)] $\mathcal S_1=(G^1,\mathcal W_1'\cup\{(s_{j_1},d_2),(a_2,a_2')\})$,
   $\mathcal S_2=(G^2,\mathcal W_2'\cup\{(b_1,t_{j_1}),(a_1',c_1)\})$;

  \item[(2.3)] $\mathcal S_1=(G^1,\mathcal W_1'\cup\{(s_{j_1},a_2'),(b_2',d_2)\})$,
  $\mathcal S_2=(G^2,\mathcal W_2'\cup \{(a_1,t_{j_1}),(b_1,b_1')\})$;

    \item[(2.4)] $\mathcal S_1=(G^1,\mathcal W_1'\cup\{(s_{j_1},b_2'),(a_2',c_2)\})$,
  $\mathcal S_2=(G^2,\mathcal W_2'\cup \{(b_1,t_{j_1}),(a_1,a_1')\})$;

  \item[(2.5)] $B_2=\{b\}$, $\mathcal S_1=(G^1,\mathcal W_1'\cup\{(s_{j_1},a_2),(c_2,d_2)\})$,
  $\mathcal S_2=(G^2,\mathcal W_2'\cup \{(a_1,t_{j_1}),(b,b_1)\})$;

   \item[(2.6)] $A_2=\{a\}$, $\mathcal S_1=(G^1,\mathcal W_1'\cup\{(s_{j_1},b_2),(c_2,d_2)\})$,
   $\mathcal S_2=(G^2,\mathcal W_2'\cup \{(b_1,t_{j_1}),(a,a_1)\})$.
   \end{itemize}

\medskip

\noindent{\bf Type 3:} $\mathcal J=\{j_1,j_2\}$.
\begin{itemize}
  \item[(3.1)] $\mathcal S_1=(G^1,\mathcal W_1'\cup \{(s_{j_1},a_2),(s_{j_2},b_2)\})$,
  $\mathcal S_2=(G^2,\mathcal W_2'\cup\{(a_1,t_{j_1}),(b_1,t_{j_2})\})$;

  \item[(3.2)] $\mathcal S_1=(G^1,\mathcal W_1'\cup \{(s_{j_1},b_2),(s_{j_2},a_2)\})$,
  $\mathcal S_2=(G^2,\mathcal W_2'\cup\{(b_1,t_{j_1}),(a_1,t_{j_2})\})$.
\end{itemize}

\begin{figure}
  \begin{center}
\begin{tabular}{cccc}
(3.1)&\vSp\incj{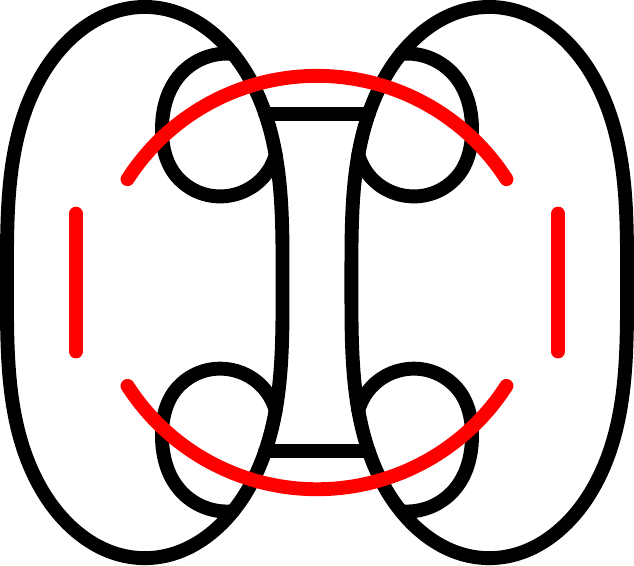}&\vSp\incj{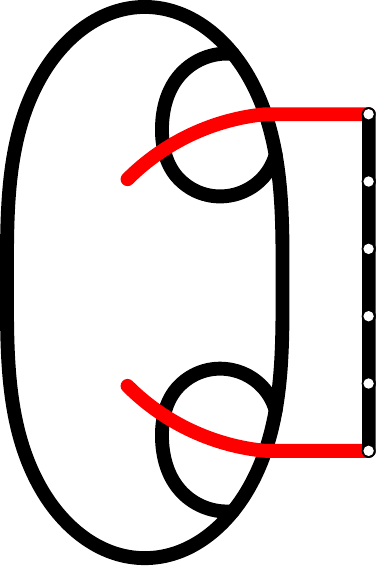}&\hspace{-1.5cm}\vSp\incj{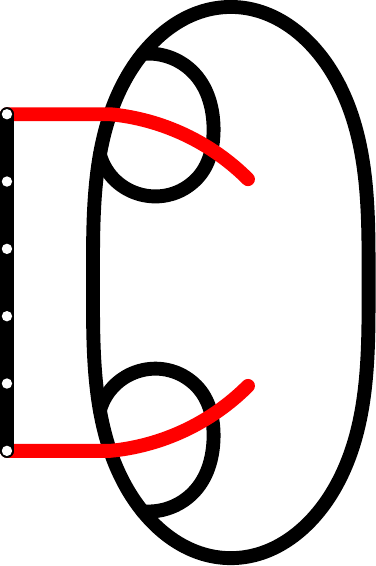}\\
(3.2)&\vSp\incj{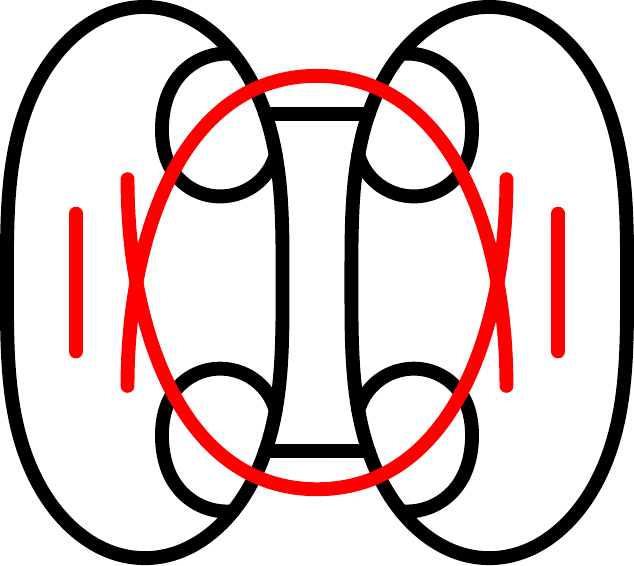}&\vSp\incj{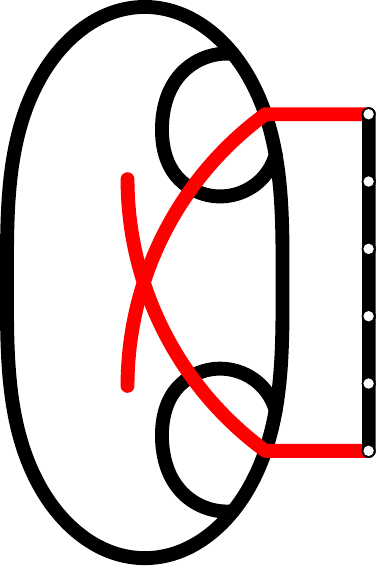}&\hspace{-1.5cm}\vSp\incj{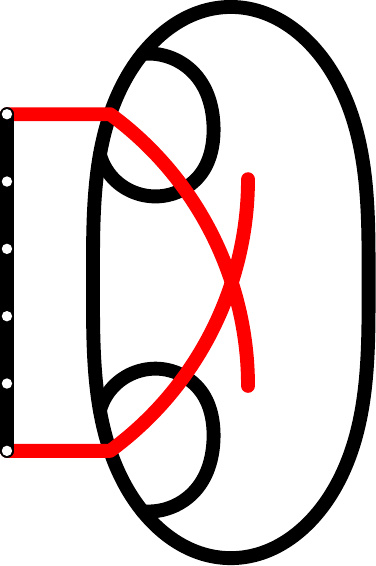}\\
\end{tabular}

  \caption{Type 3 potential solutions\label{f:t3}}
  \end{center}
\end{figure}

In what follows, for a set of paths $\mathcal P$, we denote $V(\mathcal P)=\bigcup_{P\in \mathcal P} V(P)$.

\begin{lemma}\label{k-IndPathsLemma}
  Let $(X_1,X_2,A_1,A_2,B_1,B_2)$ be a split of a minimally-sided
  2-join of $G\in\mathcal D$ such that $X_1$ is minimal side. Then
  $(G,\mathcal W)$ has a solution if and only if in one of the
  potential solutions listed above both $\mathcal S_1$ and
  $\mathcal S_2$ have a solution.
\end{lemma}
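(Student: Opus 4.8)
The plan is to prove the two directions separately, with the forward direction (a solution of $(G,\mathcal W)$ yields a potential solution) being the substantive one. First I would take a solution $\mathcal P$ of $(G,\mathcal W)$ consisting of chordless paths and analyze how $V(\mathcal P)$ meets the 2-join. Since the only edges between $X_1$ and $X_2$ run between $A_1$--$A_2$ and $B_1$--$B_2$, and all paths in $\mathcal P$ are pairwise anticomplete, the restriction of the solution to the ``crossing'' behavior is very constrained: at most one path of $\mathcal P$ can use an $A_1A_2$-edge (any two such paths would be adjacent), and likewise at most one can use a $B_1B_2$-edge; moreover if a single path uses both, then by chordlessness it enters $A_1,A_2$ once and $B_1,B_2$ once in a controlled way. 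This case analysis, organized by $|\mathcal J|$ (i.e.\ how many terminal pairs are split across the 2-join), is exactly what produces the list of Types 1, 2, 3 and their sub-cases. For each configuration I would (i) cut each path of $\mathcal P$ along the boundary sets $A_i,B_i$ into its pieces lying in $G[X_1]$ and in $G[X_2]$, (ii) in $G^1$, replace the $X_2$-pieces by the appropriate segments of the marker path $P^2=a_2a_2'c_2d_2b_2'b_2$ (using that $a_2$ is complete to $A_1$ and $b_2$ to $B_1$), and symmetrically in $G^2$, and (iii) check that the resulting paths are chordless and pairwise non-adjacent, so that they witness a solution of the claimed pair $(\mathcal S_1,\mathcal S_2)$. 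Here I would invoke Remark~\ref{remark1}, so that $G_1,G_2\in\mathcal D$ have no star cutset and every 2-join is consistent; in particular properties (vii) and (viii) of consistent 2-joins and Lemma~\ref{extreme}(i) (which gives $|A_1|,|B_1|\ge 2$, so $A_2\cup B_2$ have degree $\ge 3$ in $G$) are what make the marker-path substitutions legitimate and keep the new paths chordless.

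For the converse, I would start from a pair $(\mathcal S_1,\mathcal S_2)$ both of whose instances have solutions $\mathcal P_1$ (in $G^1$) and $\mathcal P_2$ (in $G^2$), again taken chordless. The key observation is that the extra pairs added to $\mathcal W_1'$ and $\mathcal W_2'$ in each potential solution are designed to ``match up'' along the marker paths: a solution path of $\mathcal S_1$ ending at, say, $a_2$ and one of $\mathcal S_2$ ending at $a_1$, together with the $A_1A_2$-edges of $G$, glue into a single path of $G$; a path in $\mathcal S_1$ realizing the pair $(c_2,d_2)$ certifies that the corresponding crossing path in $G$ can be routed through $X_1$ reaching both $A_1$ and $B_1$, and is reconstructed using consistency (a chordless $A_1$--$B_1$ path exists in $G[X_1]$, cf.\ the reconstruction step in the proof of Theorem~\ref{k-in-a-cycle-main}). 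Deleting all marker-path vertices from $\mathcal P_1\cup\mathcal P_2$ and pasting the surviving pieces across the 2-join, using the $A_1A_2$ and $B_1B_2$ edges exactly as dictated by the chosen potential solution, yields $k$ paths in $G$; one then verifies they are vertex-disjoint (the marker paths were the only shared ``coordinates'') and pairwise non-adjacent (the only cross-edges are the ones we deliberately used, and chordlessness of $\mathcal P_1,\mathcal P_2$ rules out unwanted adjacencies through $A_i,B_i$).

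The main obstacle is the forward direction's case analysis: one must be confident that the eleven Type-1 cases, six Type-2 cases, and two Type-3 cases are genuinely exhaustive. The right way to see this is to classify a crossing path of $\mathcal P$ by the pair of boundary sets ($A_i$ or $B_i$) it touches and the order in which it does so, then note that the presence of an $A$-crossing path and/or a $B$-crossing path (at most one of each) together with whether $A_2$, $B_2$ are singletons (which changes how the path can terminate on the $X_2$-side, since a singleton $A_2$ forces the path to pass \emph{through} that vertex) accounts for all patterns; the sub-case split in the list (e.g.\ ``$A_2=\{a\}$'' in (1.4),(1.8),(1.10),(2.6), etc.) is precisely this. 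A secondary technical point is chordlessness: when we splice in a marker-path segment such as $a_2a_2'c_2d_2b_2'b_2$, we must ensure no chord appears, which is why the marker path has length $5$ and why we need degree and non-adjacency facts from consistency — I would handle this uniformly by remarking that interior marker vertices have degree $2$ in $G^j$, so they create no chords, and that $a_2,b_2$ attach only to $A_1,B_1$ respectively. Beyond these two points the verification of each individual potential solution is routine bookkeeping, and I would present it by treating one representative case in full (say (1.2) for Type~1, (2.1) for Type~2, (3.1) for Type~3) and indicating that the others are analogous.
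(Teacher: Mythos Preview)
Your proposal is correct and follows essentially the same approach as the paper: split into cases by $|\mathcal J|$, and within each case analyze how the solution paths touch $A_1,B_1,A_2,B_2$ to determine which potential solution applies, then glue/cut along the marker paths. Two small corrections: the implication for the singleton condition runs the other way (if a path of $\mathcal P_1$ uses a vertex of $A_2$ internally then its two $A_1$-neighbors are nonadjacent, so $A_1$ is not a clique, whence consistency gives $|A_2|=1$); and the role of the pair $(c_2,d_2)$ in $\mathcal S_1$ is not to certify routability but to \emph{block} the interior of $P^2$, forcing the $(a_2,b_2)$-path to go through $X_1$. Also, the backward direction is not as light as you suggest: the paper treats each of the nineteen potential solutions (up to symmetry) separately, because the reconstruction in $G$ depends delicately on whether paths of $\mathcal P_1$ happen to touch $a_2$ or $b_2$ (e.g.\ in (1.11) one must replace such a vertex by a genuine vertex of $A_2$ or $B_2$, using consistency to pick nonadjacent ones). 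Doing only one representative per Type and declaring the rest analogous would be acceptable as a sketch, but be aware that the ``analogous'' cases are not all symmetric to one another.
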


\begin{proof}
  Let $R^j$, for $j\in\{1,2\}$, be a chordless path in $G[X_j]$ whose
  one endnode is in $A_j$, the other in $B_j$ and no interior node is
  in $A_j\cup B_j$ (these paths exist by Lemma \ref{l:consistent}).
  Furthermore, let $\mathcal I_1$, $\mathcal I_2$, $\mathcal J$,
  $\mathcal W_1'$ and $\mathcal W_2'$ be defined as above.  If
  $|\mathcal J|\geq 3$, then $(G,\mathcal W)$ has no solution. So, we
  may assume that $|\mathcal J|\leqslant 2$.  Depending on how
  $\mathcal W$ interacts with the 2-join $(X_1,X_2)$ we define
  different options for $\mathcal S_1$ and $\mathcal S_2$ as above.
  For $i=1,2$, we denote by $\mathcal W_i$ the set of terminal pairs
  of the problem $\mathcal S_i$.

  By Lemma \ref{Star=Clique} and Lemma~\ref{extreme}(i) we have
  $|A_1|,|B_1|\geq 2$, and consequently, by Lemma~\ref{l:consistent},
  $A_2$ and $B_2$ are both cliques, and if $A_1$ (resp. $B_1$) is not
  a clique then $|A_2|=1$ (resp. $|B_2|=1$).

\medskip
\noindent($\Rightarrow:$) Let $\mathcal P=\{ P_1,\ldots ,P_k\}$ be a solution of $(G,\mathcal W)$ (where for $i=1,\ldots ,k$, $P_i$ is a path from $s_i$ to $t_i$)
 and let $\mathcal P_j=\{P_i\in\mathcal P\,:\,i\in\mathcal I_j\}$, for $j\in\{1,2\}$.
 Note that since $A_2$ and $B_2$ are cliques, if a path from $\mathcal P_2$ contains a vertex from $X_1$, then it contains vertices from both $A_1$ and $B_1$.
 We now consider the following cases (that correspond to different types of interaction of $\mathcal W$ with 2-join
 $(X_1,X_2)$ from the definition of $\mathcal S_1$ and
 $\mathcal S_2$).

\medskip
\noindent{\bf Case 1:} $\mathcal J=\emptyset$.
\medskip

First, let us assume that a path $P_i$ from $\mathcal P_2$ contains a vertex from $X_1$. Then $P_i$ contains vertices from both $A_1$ and $B_1$. Let $P_i'$ be the subpath of $P_i$ contained in $G[X_1]$. Then $\mathcal P_1\cup\{P_i'',c_2d_2\}$ is a solution of the problem $\mathcal S_1$ from (1.1), where $P_i''$ is the path induced by $V(P_i')\cup\{a_2,b_2\}$, and
$(\mathcal P_2\cup\{P_i'''\})\setminus\{P_i\}$ is a solution of the problem $\mathcal S_2$ from (1.1), where $P_i'''$ is the path induced by $(V(P_i)\setminus V(P_i'))\cup\{a_1,a_1',b_1,b_1',c_1,d_1\}$. So, in what follows, we may assume that
$V(\mathcal P_2) \cap X_1=\emptyset$.

Next assume that $V(\mathcal P_1)\cap X_2=\emptyset$.
If $V(\mathcal P_1)\cap (A_1\cup B_1)=\emptyset$,
then $\mathcal P_1\cup\{a_2a_2',b_2b_2'\}$ is a solution of $\mathcal S_1$ from (1.2) and $\mathcal P_2\cup\{c_1d_1\}$ is a solution of $\mathcal S_2$ from (1.2).
If $V(\mathcal P_1)\cap A_1\neq \emptyset$ and  $V(\mathcal P_1)\cap B_1= \emptyset$,
 then no path from $\mathcal P_2$ has a vertex in $A_2$  and hence $\mathcal P_1\cup\{b_2b_2',a_2'c_2\}$ is a solution of $\mathcal S_1$ from (1.3) and $\mathcal P_2\cup\{a_1a_1'\}$ is a solution of $\mathcal S_2$ from (1.3).
 Similarly, if
 $V(\mathcal P_1)\cap A_1= \emptyset$ and  $V(\mathcal P_1)\cap B_1\neq \emptyset$,
 then both $\mathcal S_1$ and $\mathcal S_2$ from (1.5) have a solution.
 If $V(\mathcal P_1)\cap A_1\neq \emptyset$ and  $V(\mathcal P_1)\cap B_1\neq \emptyset$,
 then no path from $\mathcal P_2$ has a vertex in $A_2\cup B_2$ and hence $\mathcal P_1\cup\{a_2'c_2d_2b_2'\}$ is a solution of $\mathcal S_1$ from (1.7) and $\mathcal P_2\cup\{a_1a_1',b_1b_1'\}$ is a solution of $\mathcal S_2$ from (1.7).

We may now assume that $V(\mathcal P_1)\cap X_2\neq \emptyset$.
First, we examine the case when $|V(\mathcal P_1)\cap X_2|=1$. Then $V(\mathcal P_1)\cap X_2=\{x\}$, and
$x\in A_2$ or $x\in B_2$. Suppose that $x\in A_2$. Then $A_1$ is not a clique, and hence $A_2=\{x\}$. Also, no vertex of a path from $\mathcal P_2$ is adjacent to or coincident with $x$. Let $P_i$ be the path of $\mathcal P_1$ that contains $x$, and let $P_i'$ be the path of $G^1$ obtained from $P_i$ by replacing $x$ with $a_2$. Also, let $\mathcal P_1'=(\mathcal P_1\setminus\{P_i\})\cup\{P_i'\}$.
If $V(\mathcal P_1)\cap B_1=\emptyset$, then  $\mathcal P_1'\cup\{b_2b_2'\}$ is a solution of $\mathcal S_1$ from (1.4) and
$\mathcal P_2\cup\{a_1x\}$ is a solution of $\mathcal S_2$ from (1.4).
 If $V(\mathcal P_1)\cap B_1\neq\emptyset$,  then $V(\mathcal P_2)\cap B_2=\emptyset$
  and hence $\mathcal P_1'\cup\{c_2d_2b_2'\}$ is a solution of $\mathcal S_1$ from (1.8)  and $\mathcal P_2\cup\{a_1x,b_1b_1'\}$
  is a solution of $\mathcal S_2$ from (1.8).
  So we have shown that if $x\in A_2$ then either both $\mathcal S_1$ and $\mathcal S_2$ from (1.4) have a solution, or
  both $\mathcal S_1$ and $\mathcal S_2$ from (1.8) have a solution.
  By symmetric argument, if $x\in B_2$ then either both $\mathcal S_1$ and $\mathcal S_2$ from (1.6) have a solution, or
  both $\mathcal S_1$ and $\mathcal S_2$ from (1.9) have a solution.

  Next suppose that $V(\mathcal P_1)\cap X_2=\{ a,b\}$, where $ab$ is not an edge. Without loss of generality, let $a\in A_2$ and $b\in B_2$. It follows,
  as above, that $A_1$ and $B_1$ are not cliques, and hence $A_2=\{a\}$ and $B_2=\{b\}$.
  Also, no vertex of a path from $\mathcal P_2$ is adjacent to $a$ or $b$. Let $P_i$ (resp.\ $P_j$) be the path of $\mathcal P_1$ that contains $a$ (resp.\ $b$), and let $P_i'$ (resp.\ $P_j'$) be the path of $G^1$ obtained from $P_i$ (resp.\ $P_j$) by replacing $a$ (resp.\ $b$) with $a_2$ (resp.\ $b_2$). (Note that in this case possibly $P_i=P_j$, i.e.\ the path contains both $a$ and $b$. In this case $P_i'=P_j'$ is the path obtained from $P_i=P_j$ by replacing $a$ with $a_2$ and $b$ with $b_2$.)  Then
  $(\mathcal P_1\setminus\{P_i,P_j\})\cup\{P_i',P_j',c_2d_2\}$ a solution of $\mathcal S_1$ from (1.10), and
$\mathcal P_2\cup\{aa_1,bb_1\}$ is a solution of $\mathcal S_2$
from (1.10).

  Finally, suppose that a path $P_i$ from $\mathcal P_1$ contains an edge of $X_2$.
  Let  $P_i'$ be the subpath of $P_i$ contained in $G[X_2]$, $P_i''$ the path induced by
   $(V(P_i)\setminus V(P_i'))\cup\{a_2,a_2',b_2,b_2',c_2,d_2\}$, and  $P_i'''$ the path induced by $V(P_i')\cup\{a_1,b_1\}$.
  Then $(\mathcal P_1\setminus\{P_i\})\cup\{P_i''\}$ is a solution of $\mathcal S_1$ from (1.11) and
  $\mathcal P_2\cup\{P_i''',c_1d_1\}$ is a solution of $\mathcal S_2$ from (1.11).

\medskip
\noindent{\bf Case 2:} $|\mathcal J|=1$.
\medskip

Let $\mathcal J=\{j\}$. So $P_j\in\mathcal P$ is a  path from $s_j$ to $t_j$.
We consider the case when $V(P_j)\cap A_1$ and $V(P_j)\cap A_2$ are both non-empty, and show that in that case
in one of the cases (2.1), (2.3) or (2.5), both $\mathcal S_1$ and $\mathcal S_2$ have a solution.
The case when $V(P_j)\cap B_1$ and $V(P_j)\cap B_2$ are both non-empty is handled similarly and leads to
$\mathcal S_1$ and $\mathcal S_2$ both having a solution in one of the cases (2.2), (2.4) or (2.6).
So assume that  both $V(P_j)\cap A_1$ and $V(P_j)\cap A_2$ are non-empty.  Let $P_j^1$ (resp.\ $P_j^2$) be the
 $s_ja'$-subgraph (resp.\ $a''t_j$-subpath) of $P_j$, where $V(P_j)\cap A_1=\{a'\}$ (resp.\ $V(P_j)\cap A_2=\{a''\}$). Furthermore, let $P_j'$ be the
 path induced by $V(P_j^1)\cup\{a_2,a_2',c_2\}$ and  $P_j''$ be the path induced by $V(P_j^2)\cup \{a_1\}$.

 First, let us assume that no path from $\mathcal P_1\cup\{P_j^1\}$ has a vertex in $B_2$.
 If no path from $\mathcal P_1\cup \{P_j^1\}$ has a vertex in $B_1$, then $\mathcal P_1\cup\{P_j',b_2b_2'\}$ is a solution of
 $\mathcal S_1$ from (2.1) and $\mathcal P_2\cup \{P_j'',b_1'd_1\}$ is a solution of $\mathcal S_2$ from (2.1).
  If some path from $\mathcal P_1\cup \{P_j^1\}$ has a vertex in $B_1$, then no path from $\mathcal P_2$  has a vertex in $B_2$. Hence, $\mathcal P_1\cup\{P_j'\setminus\{c_2\},b_2'd_2\}$ is a solution of $\mathcal S_1$ from (2.3) and
  $\mathcal P_2\cup \{P_j'',b_1b_1'\}$ is a solution of  $\mathcal S_2$ from (2.3).

  Let us now assume that a path from $\mathcal P_1\cup\{P_j^1\}$ has a vertex in $B_2$.
  Then  this path has two vertices in $B_1$, which implies that $B_1$ is not a clique and hence that $B_2$ has a single vertex $b$. So, no vertex from a path from $\mathcal P_2$ is adjacent to $b$. Let $P_i$ be the path of $\mathcal P_1\cup\{P_j^1\}$ that contains $b$, and let $P_i'$ be the path of $G^1$ obtained from $P_i$ by replacing $b$ with $b_2$. Now, if $P_i\neq P_j^1$, then $(\mathcal P_1\cup\{P_j'\setminus\{a_2',c_2\},P_i',c_2d_2\})\setminus\{P_i\}$  is a
  solution of $\mathcal S_1$ from (2.5);  if $P_i=P_j^1$, then $\mathcal P_1\cup\{P_i'\setminus\{a_2',c_2\},c_2d_2\}$ is a
  solution of $\mathcal S_1$ from (2.5). Clearly, $\mathcal P_2\cup \{P_j'',bb_1\}$ is a solution of  $\mathcal S_2$ from (2.5).

\medskip
\noindent{\bf Case 3:} $|\mathcal J|=2$.
\medskip

Let $\mathcal J=\{j_1,j_2\}$. So $P_{j_1},P_{j_2}\in\mathcal P$ are   paths from $s_{j_1}$ to $t_{j_1}$ and from $s_{j_2}$ to $t_{j_2}$, respectively.
Then no path from $\mathcal P\setminus\{P_{j_1},P_{j_2}\}$ has a vertex in both $X_1$ and $X_2$.
For $s,r\in\{1,2\}$, let $P_{j_r}^s$ be the subpath of $P_{j_r}$ contained in $G[X_s]$. Let $P_{j_r}'$ (resp.\ $P_{j_r}''$), for $r\in\{1,2\}$, be the path induced by $V(P_{j_r}^1)\cup \{a_{2}\}$ (resp.\ $V(P_{j_r}^2)\cup \{a_{1}\}$) if an endnode of $P_{j_r}^1$ (resp.\ $P_{j_r}^2$) is in $A_1$ (resp.\ $A_2$), or the path induced by $V(P_{j_r}^1)\cup \{b_2\}$ (resp.\ $V(P_{j_r}^2)\cup \{b_1\}$) if an endnode of $P_{j_r}^1$ (resp.\ $P_{j_r}^2$) is in $B_1$ (resp.\ $B_2$). So, if an endnode of $P_{j_1}^1$ is in $A_1$ (resp.\ $B_1$), then $\mathcal P_1\cup\{P_{j_1}',P_{j_2}'\}$ is a solution of $\mathcal S_1$ from (3.1) (resp.\ (3.2)) and $\mathcal P_2\cup\{P_{j_1}'',P_{j_2}''\}$ is a solution for $\mathcal S_2$ from (3.1)
(resp.\ from (3.2)).

\medskip

\noindent ($\Leftarrow:$) Let $\mathcal P_1$ be a solution of $\mathcal S_1$ and $\mathcal P_2$ a solution of $\mathcal  S_2$ of some potential solution. We consider the following cases.

 \medskip
\noindent{\bf Case 1:}  $\mathcal J=\emptyset$.
\medskip

Suppose that $\mathcal S_1$ and $\mathcal S_2$ are from (1.1).
Let $P_i\in\mathcal P_1$ be the path from $a_2$ to $b_2$.
Since $c_2d_2\in \mathcal P_1$, $P_i$ contains a vertex of $A_1$ and a vertex of $B_1$.
So no path from $\mathcal P_1\setminus\{P_i,c_2d_2\}$ contains a vertex from $A_1\cup B_1$.
Hence, if no path from $\mathcal P_2$ contains a vertex from $\{a_1,a_1',b_1,b_1',c_1,d_1\}$, then
$(\mathcal P_1\cup\mathcal P_2)\setminus\{P_i,c_2d_2\}$ is a solution of $(G,\mathcal W)$.
So we may assume that $P_j\in\mathcal P_2$ contains a vertex of $\{a_1,a_1',b_1,b_1',c_1,d_1\}$. Since $A_2$ and $B_2$ are cliques it follows
that $P_j$ contains all vertices of $\{a_1,a_1',b_1,b_1',c_1,d_1\}$. Let $P_j'$ be the subgraph of $P_j$ contained in $G[X_2]$. Furthermore,
let $P_i'$ be the path induced by $V(P_i)\setminus\{a_2,b_2\}$, and $P_j''$ be the path induced by $V(P_i')\cup V(P_j')$.
Then $(\mathcal P_1\setminus\{P_i,c_2d_2\})\cup (\mathcal P_2\setminus\{P_j\})\cup\{P_j''\}$ is a solution for
$(G,\mathcal W)$.

If $\mathcal S_1$ and $\mathcal S_2$ are from (1.2), then $V(\mathcal P_1)\setminus \{ a_2,a_2',b_2,b_2'\} \subseteq X_1\setminus (A_1\cup B_1)$ and
$V(\mathcal P_2)\setminus \{ c_1,d_1\} \subseteq X_2$ (as $A_2$ and $B_2$ are cliques), and hence
$(\mathcal P_1\cup\mathcal P_2)\setminus\{c_1d_1,a_2a_2',b_2b_2'\}$ is a solution of $(G,\mathcal W)$.

If $\mathcal S_1$ and $\mathcal S_2$ are from (1.3) (resp.\ (1.5)), then $V(\mathcal P_1)\setminus \{ a_2',b_2,b_2',c_2\} \subseteq X_1\setminus B_1$ (resp.\  $V(\mathcal P_1)\setminus \{ a_2,a_2',b_2',d_2\} \subseteq X_1\setminus A_1$) and no path from $\mathcal P_2$ has a vertex in
$A_2$ (resp.\ $B_2$). Hence $(\mathcal P_1\cup\mathcal P_2)\setminus\{a_1a_1',b_2b_2',a_2'c_2\}$ (resp.\ $(\mathcal P_1\cup\mathcal P_2)\setminus\{b_1b_1',a_2a_2',b_2'd_2\}$) is a solution of $(G,\mathcal W)$.

 Suppose that $\mathcal S_1$ and $\mathcal S_2$ are from (1.4).
 Then no path of $\mathcal P_2 \setminus \{ aa_1\}$ contains a vertex that
 is adjacent to or coincident with the vertex of $A_2$, and no path
 of $\mathcal P_1$ contains a vertex of $B_1$.
 So, if $V(\mathcal P_1) \cap \{ a_2\} =\emptyset$, then clearly $(\mathcal P_1 \cup \mathcal P_2)\setminus \{ aa_1,b_2b_2'\})$ is
 a solution of $(G,\mathcal W)$. Now suppose that a path $P_i\in \mathcal P_1$ contains $a_2$. Let $P_i'$ be the path obtained from $P_i$ by replacing $a_2$ by $a$. Then  $(\mathcal P_1\cup\mathcal P_2\cup\{P_i'\})\setminus\{P_i,aa_1,b_1b_1'\}$  is a solution of $(G,\mathcal W)$.
 Similarly, if $\mathcal S_1$ and $\mathcal S_2$ are from (1.6), then $(G,\mathcal W)$ has a solution.

 If  $\mathcal S_1$ and $\mathcal S_2$ are from (1.7), then no path from $\mathcal P_2$ has a vertex in
 $A_2\cup B_2$ and hence $(\mathcal P_1\cup\mathcal P_2)\setminus\{a_1a_1',b_1b_1',P_t\}$ is a solution of $(G,\mathcal W)$, where $P_t\in\mathcal P_1$ is the path from $a_2'$ to $b_2'$.

 Suppose that $\mathcal S_1$ and $\mathcal S_2$ are from (1.8).
Then no path of $\mathcal P_2\setminus\{aa_1,b_1b_1'\}$ contains a vertex
that is adjacent to or coincident with the vertex of $A_2$,
nor coincident with a vertex of $B_2$. Let $P_t$ be the path of $\mathcal S_1$ from $c_2$ to $b_2'$.
If $V(\mathcal P_1)\cap \{ a_2\} =\emptyset$, then clearly
$(\mathcal P_1\cup\mathcal P_2)\setminus\{aa_1,b_1b_1',P_t\}$ is a solution of
 $(G,\mathcal W)$.
So, suppose that a path $P_i\in \mathcal P_1$ contains $a_2$.  Let $P_i'$ be the path obtained from $P_i$ by replacing $a_2$ by $a$. Then
$(\mathcal P_1\cup\mathcal P_2\cup\{P_i'\})\setminus\{P_i,aa_1,b_1b_1',P_t\}$ is a solution of
 $(G,\mathcal W)$.
 Similarly, if $\mathcal S_1$ and $\mathcal S_2$ are from (1.9), then $(G,\mathcal W)$ has a solution.

Suppose that $\mathcal S_1$ and $\mathcal S_2$ are from (1.10).
Then no path of $\mathcal P_2$ contains a vertex that is adjacent to or coincident with a node of $A_2\cup B_2$.
If $a_2$ (resp. $b_2$) is contained in some path of $\mathcal P_1$ then replace it by $a$ (resp. $b$).
Let $\mathcal P_1'$ be the resulting set of paths. Then $(\mathcal P_1' \cup \mathcal P_2)\setminus\{aa_1,bb_1,c_2d_2\}$ is a solution of $(G,\mathcal W)$.

Finally suppose that  $\mathcal S_1$ and $\mathcal S_2$ are from (1.11).
Let $P'$ be the path of $\mathcal P_2$ from $a_1$ to $b_1$.
If $V(\mathcal P_1) \cap \{ a_2,a_2',c_2,d_2,b_2',b_2\}=\emptyset$, then
 $\mathcal P_1\cup (\mathcal P_2\setminus\{P',c_1d_1\})$ is a solution of $(G,\mathcal W)$. If some path of $\mathcal P_1$ contains $a_2$ (resp.\ $b_2$) and no other vertex of the marker path, then replace $a_2$ (resp.\ $b_2$) in that path by a vertex $a\in A_2$ (resp.\ $b\in B_2$), and let $\mathcal P_1'$ be the resulting family of paths. Note that by Lemma \ref{l:consistent} 2-join $(X_1,X_2)$ is consistent and hence we may choose $a$ and $b$ to be non-adjacent. Then  $\mathcal P_1'\cup (\mathcal P_2\setminus\{P',c_1d_1\})$ is a solution of $(G,\mathcal W)$.
 So we may assume that some path $P_i$ of $\mathcal P_1$ contains  all of the vertices $a_2,a_2',c_2,d_2,b_2',b_2$.
 Let $P_i'$ be the path of $G$ induced by $(V(P_i)\cap X_1)\cup (V(P')\setminus \{ a_1,b_1\})$. Then $(\mathcal P_1\setminus\{P_i\})\cup (\mathcal P_2\setminus\{P',c_1d_1\})\cup\{P_i'\}$ is a solution for $(G,\mathcal W)$.

\medskip
\noindent{\bf Case 2:} $|\mathcal J|=1$.
\medskip

Let $\mathcal J=\{j\}$, and $P_j^1\in\mathcal P_1$  be the path from $s_j$ to the appropriate vertex of the marker path $P^2$. First, suppose that
$\mathcal S_1$ and $\mathcal S_2$
are from (2.1) (resp.\ (2.2)), and let $P_j^2\in\mathcal P_2$  be the path from $t_j$ to $a_1$ (resp.\ $b_1$). Furthermore, let $P_j$ be the path induced by $(V(P_j^1)\cup V(P_j^2))\setminus\{a_2,a_2',c_2,a_1\}$ (resp.\ $(V(P_j^1)\cup V(P_j^2))\setminus\{b_2,b_2',d_2,b_1\}$). Then no path from $\mathcal P_1$ has a vertex in $B_1$ (resp.\ $A_1$), and hence $(\mathcal P_1\cup\mathcal P_2\cup \{P_j\})\setminus\{P_j^1,P_j^2,b_1'd_1,b_2b_2'\}$ (resp.\ $(\mathcal P_1\cup\mathcal P_2\cup \{P_j\})\setminus\{P_j^1,P_j^2,a_1'c_1,a_2a_2'\}$) is a solution for $(G,\mathcal W)$.

Next, suppose that $\mathcal S_1$ and $\mathcal S_2$ are
from (2.3) (resp.\ (2.4)), and let $P_j^2\in\mathcal P_2$  be the path from $t_j$ to $a_1$ (resp.\ $b_1$). Furthermore, let $P_j$ be the path induced by $(V(P_j^1)\cup V(P_j^2))\setminus\{a_2,a_2',a_1\}$ (resp.\ $(V(P_j^1)\cup V(P_j^2))\setminus\{b_2,b_2',b_1\}$). Then no path from $\mathcal P_2$ has a vertex in $B_2$ (resp.\ $A_2$), and hence $(\mathcal P_1\cup\mathcal P_2\cup \{P_j\})\setminus\{P_j^1,P_j^2,b_1b_1',b_2'd_2\}$ (resp.\ $(\mathcal P_1\cup\mathcal P_2\cup \{P_j\})\setminus\{P_j^1,P_j^2,a_1a_1',a_2'c_2\}$) is a solution for $(G,\mathcal W)$.

Suppose that $\mathcal S_1$ and $\mathcal S_2$ are from (2.5),
and let $P_j^2\in\mathcal P_2$  be the path from $t_j$ to $a_1$.
If $P_j^1$ contains $b_2$ then let $P_j$ be the path induced by
$(V(P_j^1)\cup V(P_j^2)\setminus\{a_2,b_2, a_1\})\cup \{ b\}$, and otherwise
let $P_j$ be the path induced by
$(V(P_j^1)\cup V(P_j^2))\setminus\{a_2,a_1\}$.
If no path of $\mathcal P_1 \setminus P_j^1$ contains $b_2$ then
$(\mathcal P_1 \cup \mathcal P_2\cup \{ P_j\})\setminus \{ P_j^1,P_j^2,bb_1,c_2d_2\}$ is a solution for $(G,\mathcal W)$.
So suppose that $P_i\in \mathcal P_1\setminus \{ P_j^1\}$ contains $b_2$. Let $P_i'$ be the path obtained from $P_i$
by replacing $b_2$ with $b$. Then
$(\mathcal P_1 \cup \mathcal P_2\cup \{ P_i', P_j\})\setminus \{ P_i, P_j^1,P_j^2,bb_1,c_2d_2\}$ is a solution for $(G,\mathcal W)$.
Similarly, if $\mathcal S_1$ and $\mathcal S_2$ are from (2.6), then $(G,\mathcal W)$ has a solution.

\medskip
\noindent{\bf Case 3:} $|\mathcal J|=2$.
\medskip

Let $\mathcal J=\{j_1,j_2\}$. Suppose that  $\mathcal S_1$ and $\mathcal S_2$ are from (3.1).
Let $P_{j_1}^1,P_{j_2}^1\in\mathcal P_1$  be the  paths from $s_{j_1}$ to $a_{2}$  and from $s_{j_2}$ to $b_{2}$,
 respectively, and let $P_{j_1}^2,P_{j_2}^2\in\mathcal P_2$  be the  paths from $t_{j_1}$ to $a_{1}$ and from $t_{j_2}$ to $b_{1}$, respectively. Furthermore, let $P_{j_1}$ be the path induced by
 $(V(P_{j_1}^1)\cup V(P_{j_1}^2))\setminus\{a_1,a_2\}$  and $P_{j_2}$ be the path induced by $(V(P_{j_2}^1)\cup V(P_{j_2}^2))\setminus\{b_1,b_2\}$. Then $(\mathcal P_1\cup\mathcal P_2\cup\{P_{j_1},P_{j_2}\})\setminus\{P_{j_1}^1,P_{j_1}^2,P_{j_2}^1,P_{j_2}^2\}$ is a solution for $(G,\mathcal W)$.
 Similarly, if $\mathcal S_1$ and $\mathcal S_2$ are from (3.2), then $(G,\mathcal W)$ has a solution.
\end{proof}


By the previous lemma, in order to solve $(G,\mathcal W)$ it is enough
to solve $\mathcal S_1$ and $\mathcal S_2$ in each of the potential
solutions explained above. This will be recorded in the corresponding
o-graph. Further, we note that in our main algorithm the graph $G^1$
is going to be basic (we use the decomposition tree $T_G$), so all the
problems $\mathcal S_1$ can be solved using Lemma
\ref{k-IndPathsInBasic}.

Let $G_{\mathcal F,\mathcal O}$ be an o-graph such that
$G\in\mathcal D$ has a 2-join $(X_1,X_2)$. Further, we assume that
$(X_1,X_2)$ is a minimally-sided 2-join of $G$ such that $X_1$ is the
minimal side. Let $G^1$ and $G^2$ be the blocks of decompositions
w.r.t.\ this 2-join and $P^1$ and $P^2$ the marker paths used to build
these blocks ($P^j$ is contained in $G^{3-j}$, for $j\in\{1,2\}$). Let
the {\it extended marker path} $P^1_{\rm{ext}}$ of $G^2$ be the path
obtained by adding to $P^1$ vertex $a$ if it is the unique vertex of
$A_2$ and $b$ if it is the unique vertex of $B_2$. Note that, by this
definition, $P^1_{\rm{ext}}$ is a flat path of length at most 7. In
what follows we define the o-graph $G^2_{\mathcal F^2,\mathcal O^2}$.

For $j\in\{1,2\}$, let $\mathcal F_j'$ be the set of flat paths
$\mathcal F$ that are contained in $X_{j}$, together with the set of
paths $P\in\mathcal F$ of length 0 contained in $X_j$. Let
$\mathcal F^0=\mathcal F\setminus(\mathcal F_1'\cup\mathcal
F_2')$. So, $\mathcal F^0$ contains only flat paths.  By Lemma
\ref{extreme} and Remark \ref{remark1}, for every $P\in\mathcal F^0$,
the set ${P}\cap X_2$ is of size 1 and contained in $A_2\cup
B_2$. Furthermore, since $P$ is a flat path of length greater than~1,
if $P \cap A_2\neq \emptyset$, then $|A_2| = 1$ and similarly if
$P \cap B_2\neq \emptyset$, then $|B_2| = 1$.

Let us now define $\mathcal F^2$ and $\mathcal O^2$. If
$\mathcal F_1' \cup \mathcal F^0 = \emptyset$, then
$\mathcal F^2 = \mathcal F_2'$ and otherwise
$\mathcal F^2 = \mathcal F_2' \cup \{P^1_{\rm{ext}}\}$.  Next, for
every $\mathcal W\in\mathcal O$ and every potential solution
associated with the problem $(G,\mathcal W)$ we solve the
corresponding problem $\mathcal S_1$ and if it has a solution we add
the set of terminal pairs of the corresponding problem $\mathcal S_2$
to~$\mathcal O^2$, except when $\mathcal S_1$ from (1.1) has a
solution.  In this last case, we add the set of terminal pairs of the
corresponding problem $\mathcal S_2$ to $\mathcal O^2$
and we disregard all other potential solutions for~$\mathcal W$.  By
construction, all terminal vertices of $\mathcal S_2$ are in the paths
from $\mathcal F_{2}'\cup\{P^1_{\rm{ext}}\}$. Further, if each
terminal vertex of $\mathcal W$ is in a path from $\mathcal F_2'$ (in
particular when $\mathcal F_1'\cup\mathcal F_0 = \emptyset$), then all
terminal vertices of a set that is added to $\mathcal O^2$ are in the
paths from $\mathcal F_{2}'$. Indeed, then in potential solution (1.1)
we have $\mathcal S_1=(G^1,\{(a_2,b_2),(c_2,d_2)\})$, which has a
solution since $(X_1,X_2)$ is a consistent 2-join (by Lemma
\ref{l:consistent}).  Hence, only the set of terminals for
$\mathcal S_2$ from the potential solution (1.1) is added to
$\mathcal O^2$, and this set contains only vertices from paths of
$\mathcal F_2'$.  So, $(G^2,\mathcal F^2,\mathcal O^2)$ is a
well-defined o-graph.


\begin{lemma}\label{l:main-o-graphs}
Let $G_{\mathcal F,\mathcal O}$ be an o-graph such that $G\in\mathcal
D$, and let $(X_1,X_2)$ be a minimally sided 2-join of $G$ such that
$X_1$ is minimal side. Further, let $G^1$ and $G^2$ be the blocks of
decompositions w.r.t.\ this 2-join and  $G^2_{\mathcal F^2,\mathcal
  O^2}$ o-graph constructed above. Then $G_{\mathcal F,\mathcal O}$ is
linkable if and only if  $G^2_{\mathcal F^2, \mathcal O^2}$ is linkable.
\end{lemma}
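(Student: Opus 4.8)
The plan is to prove the equivalence by unwinding the definition of ``linkable'' on both sides and reducing it to Lemma~\ref{k-IndPathsLemma} applied instance by instance. Recall that $G_{\mathcal F,\mathcal O}$ is linkable if and only if there exists $\mathcal W\in\mathcal O$ such that $(G,\mathcal W)$ has a solution, and $G^2_{\mathcal F^2,\mathcal O^2}$ is linkable if and only if there exists $\mathcal W_2\in\mathcal O^2$ such that $(G^2,\mathcal W_2)$ has a solution. So I would fix an arbitrary $\mathcal W\in\mathcal O$ and apply Lemma~\ref{k-IndPathsLemma} to the split $(X_1,X_2,A_1,A_2,B_1,B_2)$: $(G,\mathcal W)$ has a solution if and only if there is a potential solution $(\mathcal S_1,\mathcal S_2)$ (one of (1.1)--(1.11), (2.1)--(2.6), (3.1)--(3.2) for that $\mathcal W$) such that both $\mathcal S_1$ and $\mathcal S_2$ have solutions. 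Since $G^1$ is basic and the problems $\mathcal S_1$ are \textsc{Induced Disjoint Paths} problems on $G^1$, each $\mathcal S_1$ can be solved, and by construction of $\mathcal O^2$ the terminal-pair set of $\mathcal S_2$ was added to $\mathcal O^2$ precisely when $\mathcal S_1$ had a solution (with the special convention for (1.1)). Thus I want to argue the chain: $G_{\mathcal F,\mathcal O}$ linkable $\iff$ for some $\mathcal W\in\mathcal O$, some potential solution has both $\mathcal S_1,\mathcal S_2$ solvable $\iff$ for some $\mathcal W\in\mathcal O$, some potential solution has $\mathcal S_1$ solvable and $\mathcal S_2$ (whose terminal set is then in $\mathcal O^2$) solvable $\iff$ for some $\mathcal W_2\in\mathcal O^2$, $(G^2,\mathcal W_2)$ is solvable $\iff$ $G^2_{\mathcal F^2,\mathcal O^2}$ linkable.

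For the forward direction I would take $\mathcal W\in\mathcal O$ with $(G,\mathcal W)$ solvable, obtain via Lemma~\ref{k-IndPathsLemma} a potential solution with $\mathcal S_1,\mathcal S_2$ both solvable, and then note that the construction of $\mathcal O^2$ placed the terminal-pair set $\mathcal W_2$ of $\mathcal S_2$ into $\mathcal O^2$ (when $\mathcal S_1$ from (1.1) is solvable this is exactly the set that gets added, and in all other cases the set is added whenever the corresponding $\mathcal S_1$ is solvable); hence $(G^2,\mathcal W_2)$ is solvable and $G^2_{\mathcal F^2,\mathcal O^2}$ is linkable. I should verify that $\mathcal W_2$ is a legitimate member of $\mathcal O^2$, i.e.\ that all its terminal vertices lie on paths of $\mathcal F^2$: terminals of $\mathcal W_2$ coming from $\mathcal W_2'$ lie on paths of $\mathcal F_2'$ (since those terminals are in $X_2$ and the relevant flat paths of $\mathcal F$ restricted to $X_2$ are still flat paths), terminals coming from $\mathcal J$ (the $t_{j}$'s) lie on paths of $\mathcal F_2'\cup\mathcal F^0$ hence, by Lemma~\ref{extreme} and Remark~\ref{remark1}, on $P^1_{\mathrm{ext}}$ or on paths of $\mathcal F_2'$, and the marker-path vertices ($a_2,a_2',c_2,d_2,b_2',b_2$ and $a_1,a_1',c_1,d_1,b_1',b_1$ and the special $a,b$) lie on $P^1_{\mathrm{ext}}$ or $P^2$; the paper's construction paragraph already checks $\mathcal F^2$ contains $P^1_{\mathrm{ext}}$ exactly when $\mathcal F_1'\cup\mathcal F^0\neq\emptyset$, which is precisely the situation in which $\mathcal S_2$ can use marker vertices of $P^1$ or a vertex of $\mathcal F^0$.

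For the reverse direction I would take $\mathcal W_2\in\mathcal O^2$ with $(G^2,\mathcal W_2)$ solvable. By construction, $\mathcal W_2$ was added to $\mathcal O^2$ because of some $\mathcal W\in\mathcal O$ and some potential solution for $(G,\mathcal W)$ whose $\mathcal S_1$ is solvable and whose $\mathcal S_2$ has terminal-pair set exactly $\mathcal W_2$; thus both $\mathcal S_1$ and $\mathcal S_2$ of that potential solution are solvable, and Lemma~\ref{k-IndPathsLemma} gives that $(G,\mathcal W)$ is solvable, so $G_{\mathcal F,\mathcal O}$ is linkable. The one subtlety is the ``disregard'' convention for (1.1): when $\mathcal S_1$ from (1.1) is solvable we only add the (1.1)-version of $\mathcal S_2$ and drop the others; this is sound because Lemma~\ref{k-IndPathsLemma} is an \emph{if and only if} over \emph{all} listed potential solutions, so dropping some of them could only hurt if none of the retained ones witnessed solvability — but here the (1.1) pair is retained and $\mathcal S_1$ of (1.1) always has a solution (it is $(G^1,\{(a_2,b_2),(c_2,d_2)\})$, solvable because $(X_1,X_2)$ is consistent by Lemma~\ref{l:consistent}), so $(G,\mathcal W)$ is solvable iff $\mathcal S_2$ of (1.1) is solvable, which is exactly the retained instance. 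I expect the main obstacle to be bookkeeping rather than mathematics: carefully matching, for each of the nineteen potential-solution types, the terminal vertices of $\mathcal S_2$ with the paths available in $\mathcal F^2$ (distinguishing the cases $\mathcal F_1'\cup\mathcal F^0=\emptyset$ versus $\neq\emptyset$, and tracking where $A_2,B_2$ being singletons forces $\mathcal F^0$-paths to attach), so that $G^2_{\mathcal F^2,\mathcal O^2}$ is genuinely a well-defined o-graph and the equivalence goes through cleanly.
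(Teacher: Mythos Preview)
Your overall strategy---reduce both directions to Lemma~\ref{k-IndPathsLemma} applied to each $\mathcal W\in\mathcal O$ and trace the construction of $\mathcal O^2$---is exactly the paper's approach, and your reverse direction is correct as written (if $\mathcal W_2\in\mathcal O^2$ then by construction it came from some potential solution whose $\mathcal S_1$ is solvable, so both problems are solvable and Lemma~\ref{k-IndPathsLemma} finishes).

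There is, however, a real gap in your treatment of the ``disregard'' rule for~(1.1), and it lives in the \emph{forward} direction, not the reverse. Two points:
\begin{itemize}
\item You write that $\mathcal S_1$ of (1.1) is $(G^1,\{(a_2,b_2),(c_2,d_2)\})$ and hence always solvable by consistency of the 2-join. That is only the special case $\mathcal W_1'=\emptyset$ discussed in the construction paragraph. In general $\mathcal S_1$ of (1.1) is $(G^1,\mathcal W_1'\cup\{(a_2,b_2),(c_2,d_2)\})$, which need not be solvable.
\item Even granting solvability of $\mathcal S_1$ of (1.1), your claimed equivalence ``$(G,\mathcal W)$ solvable iff $\mathcal S_2$ of (1.1) solvable'' does not follow. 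Lemma~\ref{k-IndPathsLemma} only guarantees that \emph{some} potential solution (1.$i$) has both $\mathcal S_1,\mathcal S_2$ solvable; that $i$ may well be $5$ or $8$, and then the terminal set of $\mathcal S_2$ from (1.$i$) is \emph{not} in $\mathcal O^2$ because the disregard rule kept only the (1.1) instance.
\end{itemize}
The missing observation---and this is exactly what the paper uses---is that for every $i\in\{1,\dots,11\}$ the terminal-pair set of $\mathcal S_2$ from (1.1), namely $\mathcal W_2'$, is a \emph{subset} of the terminal-pair set of $\mathcal S_2$ from (1.$i$). Hence any solution of $\mathcal S_2$ from (1.$i$) restricts to a solution of $\mathcal S_2$ from (1.1). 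So in the forward direction: $(G,\mathcal W)$ solvable and $\mathcal W$ of type~1 gives, via Lemma~\ref{k-IndPathsLemma}, some (1.$i$) with $\mathcal S_2$ solvable; the subset observation then yields that $\mathcal S_2$ of (1.1) is solvable, and since its terminal set is the one placed in $\mathcal O^2$, $G^2_{\mathcal F^2,\mathcal O^2}$ is linkable. With this correction your plan goes through and coincides with the paper's proof.
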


\begin{proof}
  Suppose $G^2_{\mathcal F^2, \mathcal O^2}$ is linkable.  Then
  $G_{\mathcal F,\mathcal O}$ is linkable as shown by
  Lemma~\ref{k-IndPathsLemma} and the definition of
  $G^2_{\mathcal F^2, \mathcal O^2}$.

  So, suppose that $G_{\mathcal F,\mathcal O}$ is linkable, that is
  some problem $(G, \mathcal W)$ with $\mathcal W \in \mathcal O$ has
  a solution. Then the converse also follows from
  Lemma~\ref{k-IndPathsLemma}, except in the case when for
  $(G,\mathcal W)$ the problem $\mathcal S_1$ from (1.1) has a
  solution. Recall that in this case we add the set of terminal pairs
  of problem $\mathcal S_2$ from (1.1) to $\mathcal O^2$ and disregard
  all other potential solutions for $\mathcal W$.  Since $\mathcal W$
  is of type~1 and $(G,\mathcal W)$ has a solution, by
  Lemma~\ref{k-IndPathsLemma} problem $\mathcal S'_2$ from (1.$i$),
  for some $i=1,\dots, 11$ has a solution.  This implies that
  $\mathcal S_2$ from (1.1) has a solution since the set of pairs of
  terminals for $\mathcal S_2$ is a subset of the one for
  $\mathcal S'_2$.  Hence, $G^2_{\mathcal F^2, \mathcal O^2}$ is
  linkable.
\end{proof}

\begin{theorem}\label{linkable-2join}
  Let $c\in\mathbb N$ be a constant. There is an algorithm with the
  following specifications:
\begin{description}
\item[ Input:] An o-graph $G_{\mathcal F,\mathcal O}$, where
  $|\mathcal F|\leq c$ and $G\in\mathcal D$.
\item[ Output:] YES if  $G_{\mathcal F,\mathcal O}$ is linkable, and NO otherwise.
\item[ Running time:] $\mathcal O(n^6)$.
\end{description}
\end{theorem}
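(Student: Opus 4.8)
The plan is to follow the same pattern as the proof of Theorem~\ref{k-in-a-cycle-main}: build the $2$-join decomposition tree of $G$ and push the o-graph down it one node at a time, using Lemma~\ref{l:main-o-graphs} to preserve linkability at each step, until we reach a basic graph, which is handled by Corollary~\ref{l:BasicO-graph}. First I would dispose of the basic case: using the $\mathcal{O}(n^2m)$-time recognition algorithm for $\mathcal{B}$ from \cite{twf-p2}, test whether $G\in\mathcal{B}$; if so, apply Corollary~\ref{l:BasicO-graph} (with $t=c$) to decide linkability of $G_{\mathcal F,\mathcal O}$ in $\mathcal{O}(n^5)$ time and stop. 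Otherwise $G\in\mathcal{D}\setminus\mathcal{B}$, so by Theorem~\ref{decomposeTW} $G$ has a $2$-join, and I would invoke Lemma~\ref{DT-construction} to build, in $\mathcal{O}(n^4m)=\mathcal{O}(n^6)$ time, a $2$-join decomposition tree $T_G$ of depth $p\le n$ whose internal nodes $G^0=G,G^1,\dots,G^{p-1}$ all lie in $\mathcal{D}$, whose leaves $G^1_B,\dots,G^p_B,G^p$ all lie in $\mathcal{B}$, and in which $G^{i+1}$ and $G^{i+1}_B$ are the blocks of decomposition of $G^i$ with respect to a minimally-sided $2$-join $(X_1^i,X_2^i)$ with minimal side $X_1^i\subseteq V(G^{i+1}_B)$.

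Next I would process the tree top-down. Set $G^0_{\mathcal F^0,\mathcal O^0}=G_{\mathcal F,\mathcal O}$. For $i=0,1,\dots,p-1$, given the o-graph $G^i_{\mathcal F^i,\mathcal O^i}$, apply to the minimally-sided $2$-join $(X_1^i,X_2^i)$ of $G^i$ the construction preceding Lemma~\ref{l:main-o-graphs} (in the notation of that lemma, $G^1$ is the block containing the minimal side $X_1^i$, that is $G^{i+1}_B$, which is \emph{basic}, and $G^2$ is $G^{i+1}$): for each $\mathcal W\in\mathcal O^i$ and each potential solution, solve the problem $\mathcal S_1$ on $G^{i+1}_B$ using Lemma~\ref{k-IndPathsInBasic} in $\mathcal{O}(n^5)$ time, and from the outcomes build $\mathcal F^{i+1},\mathcal O^{i+1}$ on $G^{i+1}$ exactly as in that construction (including the special handling of potential solution (1.1)). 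By Lemma~\ref{l:main-o-graphs}, $G^i_{\mathcal F^i,\mathcal O^i}$ is linkable if and only if $G^{i+1}_{\mathcal F^{i+1},\mathcal O^{i+1}}$ is linkable. After $p$ steps we reach $G^p_{\mathcal F^p,\mathcal O^p}$ with $G^p\in\mathcal B$, which is settled by Corollary~\ref{l:BasicO-graph}; the answer returned there is, by the chain of equivalences, the answer for $G_{\mathcal F,\mathcal O}$.

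The step I expect to be the main obstacle is verifying that the process stays ``small'', which is what makes the running time polynomial. From the definition of $\mathcal F^{i+1}$ one has $\mathcal F^{i+1}\subseteq \mathcal F_2'\cup\{P^1_{\mathrm{ext}}\}$, where $\mathcal F_2'$ is the set of elements of $\mathcal F^i$ contained in $X_2^i$, and the path $P^1_{\mathrm{ext}}$ is added only when some element of $\mathcal F^i$ meets $X_1^i$ (equivalently when $\mathcal F^i\setminus\mathcal F_2'\neq\emptyset$); hence $|\mathcal F^{i+1}|\le|\mathcal F^i|$, so $|\mathcal F^i|\le c$ for every $i$ by induction. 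Consequently each path of $\mathcal F^i$ has at most $8$ vertices, each $\mathcal W\in\mathcal O^i$ has at most $4c$ terminal pairs, and by Lemma~\ref{l:boundedNumber} $|\mathcal O^i|\le 2^{8c}(8c)!$, a constant; since the number of potential solutions per $\mathcal W$ is also a constant, each of the $p+1=\mathcal{O}(n)$ tree nodes triggers only $\mathcal{O}(1)$ calls to the $\mathcal{O}(n^5)$-time basic-graph subroutine. Thus the processing phase costs $\mathcal{O}(n\cdot n^5)=\mathcal{O}(n^6)$, and together with the $\mathcal{O}(n^4m)$ cost of building $T_G$ and the final $\mathcal{O}(n^5)$ call, the total running time is $\mathcal{O}(n^6)$. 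A secondary technical point to check is that the $\mathcal S_1$-instances produced by the potential solutions are bona fide \textsc{Induced Disjoint Paths} instances on $G^{i+1}_B$ (or trivially reducible to such, returning \textsc{no} if two terminals from distinct pairs are adjacent), so that Lemma~\ref{k-IndPathsInBasic} is applicable; this is routine, since the additional terminals are placed on the flat marker path.
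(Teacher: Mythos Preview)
Your proposal is correct and follows essentially the same approach as the paper's own proof: handle the basic case directly, otherwise build the 2-join decomposition tree via Lemma~\ref{DT-construction}, push the o-graph down the tree using Lemma~\ref{l:main-o-graphs}, prove by induction that $|\mathcal F^i|\le c$ throughout (via exactly the case split you give on whether $\mathcal F_1'\cup\mathcal F^0$ is empty), bound $|\mathcal O^i|$ by Lemma~\ref{l:boundedNumber}, and conclude the $\mathcal O(n^6)$ running time. Your final remark about verifying that the $\mathcal S_1$-instances are legitimate \textsc{Induced Disjoint Paths} instances is a reasonable technical caveat that the paper leaves implicit.
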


\begin{proof}
In \cite{twf-p2} an $\mathcal O(n^2m)$-time algorithm is given for recognizing whether a graph belongs to $\mathcal B$.
If $G\in\mathcal B$, then the problem can be solved in time $\mathcal O(n^5)$ using Corollary \ref{l:BasicO-graph}. So we may assume that $G\in \mathcal D \setminus \mathcal B$.
By Theorem \ref{decomposeTW}, $G$ has a 2-join, and by Lemma \ref{Star=Clique} $G$ has no star cutset.
Using Lemma \ref{DT-construction} we build a 2-join decomposition tree $T_G$ in time $\mathcal O(n^4m)$ and use the notation from the definition of $T_G$.
\smallskip

\noindent{\bf Description and correctness of the algorithm.} We now process the decomposition tree $T_G$ from its root $G^0=G$ and the associated o-graph $G^0_{\mathcal F^0,\mathcal O^0}$ (where $\mathcal F^0=\mathcal F$ and $\mathcal O^0=\mathcal O$). For each $0\leq i\leq p-1$ we build the o-graph $(G^{i+1},\mathcal F^{i+1},\mathcal O^{i+1})$.  Then, by Lemma \ref{l:main-o-graphs}, $G_{\mathcal F,\mathcal O}$ is linkable if and only if $(G^{p},\mathcal F^{p},\mathcal O^{p})$ is linkable. So, it is enough to check whether $(G^{p},\mathcal F^{p},\mathcal O^{p})$ is linkable; since $G^p\in\mathcal B$, this can be done using Corollary \ref{l:BasicO-graph} (in the next paragraph we will verify that all hypothesis to Corollary \ref{l:BasicO-graph} hold).
\smallskip

\noindent{\bf Complexity of the algorithm.} Let $|\mathcal F|=t$. We
prove that for each $i\in\{0,1,\ldots,p\}$,
$|{\mathcal F}^{i}|\leq t$.  Our proof is by induction on $i$. So,
suppose that this is true for $i\leq p-1$ and let us prove it for
$i+1$. Note that ${\mathcal F}^{i+1}$ is obtained from
${\mathcal F}^{i}$ by adding at most one element (that corresponds to
the extended marker path of $G^{i+1}$) and removing all elements from
${\mathcal F}^{i}$ that are in the corresponding set
$\mathcal F_1'\cup\mathcal F^0$. So,
$|{\mathcal F}^{i+1}|\leq|{\mathcal F}^{i}|\leq t$, unless in
$(G^i,\mathcal F^i,\mathcal O^i)$ the set of corresponding paths
$\mathcal F_1' \cup \mathcal F^0$ is empty. But then the extended
marker path of $G^{i+1}$ is not an element of $\mathcal F^{i+1}$, and
hence ${\mathcal F}^{i+1}\subseteq{\mathcal F}^{i}$, which implies
$|{\mathcal F}^{i+1}|\leq t$. In particular, since
$|\mathcal F|\leq c$, we have $|\mathcal F^i|\leq c$, for
$0\leq i\leq p$.

Let us now examine the complexity of our algorithm. By Lemma
\ref{l:boundedNumber} and previous paragraph, for each
$i\in\{0,1,\ldots,p-1\}$, to build o-graphs
$(G^{i+1},\mathcal F^{i+1},\mathcal O^{i+1})$ from the o-graph
$(G^{i},\mathcal F^{i},\mathcal O^{i})$ it is enough to solve at most
$11\cdot|\mathcal O^i|\leq 11\cdot 2^{8c}(8c)!$ \textsc{Induced
  Disjoint Paths} problems on $G_B^{i+1}$ (since for each problem
$(G^i,\mathcal W)$, where $\mathcal W\in\mathcal O^{i}$ we need to
solve at most 11 problems from the potential solutions) and each of
these problems has at most $4|\mathcal F^{i}| \leq 4c$ terminal
pairs. So, by Lemma~\ref{k-IndPathsInBasic}, we can build o-graphs
$(G^{k+1},\mathcal F^{k+1},\mathcal O^{k+1})$, for
$i\in\{0,1,\ldots,p-1\}$, in time
$\mathcal O(p\cdot n^5)=\mathcal O(n^6)$ (since $p=\mathcal O(n)$ by
Lemma \ref{DT-construction}). Finally, using Corollary
\ref{l:BasicO-graph} we can check if
$(G^{p},\mathcal F^{p},\mathcal O^{p})$ is linkable in time
$\mathcal O(n^5)$.
\end{proof}

\begin{theorem}\label{k-IndPaths-2join}
For a fixed integer $k$, there is an algorithm with the following specifications:
\begin{description}
\item[ Input:] A graph $G\in\mathcal D$ and a set of pairs $\mathcal W=\{(s_1,t_1),(s_2,t_2),\ldots,(s_k,t_k)\}$ of vertices of $G$ such that all $2k$ vertices are distinct and the only possible edges between these vertices are of the form $s_it_i$, for some $1\leqslant i\leqslant k$.
\item[ Output:]
YES if the problem $(G,\mathcal W)$ has a solution, and NO otherwise.
\item[ Running time:] $\mathcal O(n^6)$.
\end{description}
\end{theorem}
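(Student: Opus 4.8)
The plan is to observe that this theorem follows almost immediately from Theorem~\ref{linkable-2join} once the $k$-\textsc{Induced Disjoint Paths} instance is recast as a question about o-graphs. Given an instance $(G,\mathcal W)$ with $G\in\mathcal D$, I would form the o-graph $G_{\mathcal F,\mathcal O}$ by taking $\mathcal F$ to be the set of the $2k$ terminal vertices of $\mathcal W$, each viewed as a path of length~$0$, and $\mathcal O=\{\mathcal W\}$. This is a legitimate o-graph: $\mathcal F$ consists only of (degenerate) flat paths of length at most~$7$, and for the single set $\mathcal W\in\mathcal O$ the instance $(G,\mathcal W)$ is a valid \textsc{Induced Disjoint Paths} instance by hypothesis, with every terminal vertex contained in a path of $\mathcal F$ (namely the length-$0$ path that is that vertex). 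As observed right after the definition of o-graphs, $(G,\mathcal W)$ has a solution if and only if $G_{\mathcal F,\mathcal O}$ is linkable.

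Next I would note that, since $k$ is a fixed integer, $|\mathcal F|=2k$ is a constant independent of the input graph, so Theorem~\ref{linkable-2join} applies with $c=2k$: there is an $\mathcal O(n^6)$-time algorithm deciding whether $G_{\mathcal F,\mathcal O}$ is linkable. Composing this with the reduction above yields the desired $\mathcal O(n^6)$-time algorithm for $(G,\mathcal W)$, returning \textsc{yes} or \textsc{no} according to the answer of the linkability test; correctness is exactly the equivalence quoted above.

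There is essentially no obstacle left to overcome: all of the real work — processing the 2-join decomposition tree $T_G$, building the derived o-graphs $G^{i}_{\mathcal F^i,\mathcal O^i}$, bounding $|\mathcal F^i|$ and (via Lemma~\ref{l:boundedNumber}) $|\mathcal O^i|$, and proving that linkability is preserved along the tree (Lemmas~\ref{k-IndPathsLemma} and~\ref{l:main-o-graphs}) — has already been carried out in the proof of Theorem~\ref{linkable-2join}. The only points that warrant an explicit line are verifying that the reduction produces a \emph{valid} o-graph and, more importantly, that the constant $c=2k$ depends only on the fixed parameter $k$ and not on the input, so that invoking the $\mathcal O(n^6)$ bound of Theorem~\ref{linkable-2join} (whose hidden constant is allowed to depend on $c$) is legitimate in this setting.
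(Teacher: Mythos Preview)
Your proposal is correct and follows exactly the same approach as the paper: define the o-graph with $\mathcal F$ the set of $2k$ terminals viewed as length-$0$ paths and $\mathcal O=\{\mathcal W\}$, then invoke Theorem~\ref{linkable-2join} with $c=2k$. Your write-up simply makes explicit the verifications (validity of the o-graph, the equivalence with linkability, and the dependence of $c$ only on $k$) that the paper leaves implicit.
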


\begin{proof}
  Let $G_{\mathcal F,\mathcal O}$ be the o-graph defined with
  $\mathcal F=\{s_i\,:\,1\leq i\leq k\}\cup\{t_i\,:\,1\leq i\leq k\}$
  and $\mathcal O=\{\mathcal W\}$. Then
  $G_{\mathcal F,\mathcal O}$ is linkable if and only if
  $(G,\mathcal W)$ has a solution. Hence, to solve the given problem
  it is enough to apply Theorem \ref{linkable-2join} for
  $G_{\mathcal F,\mathcal O}$.
\end{proof}

\begin{corollary}
  For graphs in $\mathcal D$ the $k$-\textsc{Induced Disjoint Paths}
  problem is fixed-parameter tractable, when parameterized by $k$.
\end{corollary}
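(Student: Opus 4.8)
The plan is to revisit the algorithm behind Theorem~\ref{k-IndPaths-2join} --- equivalently, the algorithm of Theorem~\ref{linkable-2join} applied to the o-graph $G_{\mathcal F,\mathcal O}$ with $\mathcal F=\{s_1,\dots,s_k,t_1,\dots,t_k\}$ and $\mathcal O=\{\mathcal W\}$ --- and to verify that its running time is of the form $f(k)\,n^c$ for a \emph{fixed} constant $c$ and a computable function $f$, rather than merely $\mathcal O(n^6)$ for each fixed $k$. Concretely, I would argue that every occurrence of $k$ in the complexity analysis of Theorem~\ref{linkable-2join} enters only as a multiplicative factor and never in an exponent of $n$. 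This is entirely analogous to the way Corollary~\ref{k-in-a-cycle-FPT} is deduced from Theorem~\ref{k-in-a-cycle-main}.

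First I would record that here $|\mathcal F|=2k$, so throughout the recursion down the 2-join decomposition tree $T_G$ the invariant established in the proof of Theorem~\ref{linkable-2join} gives $|\mathcal F^i|\le 2k$ for all $i$; hence, by Lemma~\ref{l:boundedNumber}, each $\mathcal O^i$ has at most $2^{16k}(16k)!$ elements, and each $\mathcal W\in\mathcal O^i$ has at most $8k$ terminal pairs. Building $T_G$ takes time $\mathcal O(n^4m)$ and yields depth $p\le n$ (Lemma~\ref{DT-construction}), both bounds independent of $k$. At each of the $\mathcal O(n)$ nodes we solve at most $11\,|\mathcal O^i|\le 11\cdot 2^{16k}(16k)!$ instances of \textsc{Induced Disjoint Paths} on the basic block $G_B^{i+1}$, each with at most $8k$ terminal pairs, plus one final call of the same kind on $G^p$.

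The crucial point --- and the single place where a careless reading could go wrong --- is that each such instance on a basic graph is solved in time $g(k)\,n^5$ with the exponent $5$ not depending on $k$. Indeed, by Lemma~\ref{k-IndPathsInBasic}, following the proof of Lemma~\ref{k-in-a-cycle-basic}, the problem reduces to $\mathcal O(n^2)$ auxiliary line-graph instances, each of which in turn reduces to $2^{\mathcal O(k)}$ ordinary \textsc{Disjoint Paths} problems on a triangle-free chordless graph with $\mathcal O(n)$ vertices, each solved in time $h(\mathcal O(k))\,n^3$ by the Robertson--Seymour linkage algorithm (Theorem~\ref{Th:RS}); the auxiliary steps (finding claw centres, computing the root graph) cost $\mathcal O(n^2m)$, again with a $k$-free exponent. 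One must therefore only check that neither Lemma~\ref{k-IndPathsInBasic}, nor the reduction to line graphs, nor Theorem~\ref{Th:RS} hides a factor $n^{\Theta(k)}$; once this is confirmed, multiplying through gives a total running time of $\mathcal O(n^4m)+\mathcal O(n)\cdot 11\cdot 2^{16k}(16k)!\cdot g(k)\,n^5 = f(k)\,n^6$ for a suitable computable $f$ (one may also package the basic-graph step through Corollary~\ref{l:BasicO-graph}). This is the desired FPT bound, and nothing else in the argument depends on $k$ in the exponent.
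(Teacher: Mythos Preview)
Your proposal is correct and follows essentially the same approach as the paper's own proof: both track the invariant $|\mathcal F^i|\le 2k$ down the 2-join decomposition tree, use Lemma~\ref{l:boundedNumber} to bound $|\mathcal O^i|\le 2^{16k}(16k)!$, observe that each call on a basic graph has at most $8k$ terminal pairs and can be solved in time $2^{\mathcal O(k)}h(\mathcal O(k))\,n^5$ via the Robertson--Seymour linkage algorithm, and multiply through to get $f(k)\,n^6$. The paper is somewhat terser (it simply says ``as in the proof of Corollary~\ref{k-in-a-cycle-FPT}'') but arrives at the explicit bound $22\cdot 2^{32k}(16k)!\,h(8k)\,n^6$, which matches your analysis.
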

\begin{proof}
  Let $(G,\mathcal W)$ be an instance of the $k$-\textsc{Induced
    Disjoint Paths} problem.  As in the proof of Corollary
  \ref{k-in-a-cycle-FPT}, we conclude that the problem
  $(G,\mathcal W)$ can be solved in time $2^{2k}h(k)n^5$ for graphs in
  $\mathcal B$. Now, for each $i\in\{0,1,\ldots,p\}$ the algorithm
  from Theorem \ref{k-IndPaths-2join} has at most
  $11\cdot 2^{16k}(16k)!$ calls to the algorithm from
  Lemma~\ref{k-IndPathsInBasic} (here $c=2k$), $p\leq n$ (by
  Lemma~\ref{DT-construction}) and in each call of
  Lemma~\ref{k-IndPathsInBasic} we solve a problem with at most $8k$
  terminal pairs.  We conclude that the problem $(G,\mathcal W)$ can
  be solved in time
  $$(p+1)\cdot 11 \cdot 2^{16k} (16k)!  2^{16k} h(8k) n^5 \leq
  22 \cdot 2^{32k} (16k)!h(8k)n^6$$ for graphs in
  $\mathcal D$.
\end{proof}


\subsection{Induced disjoint paths on $\mathcal C$ and some related problems}
\label{sub:related}

Theorems \ref{decomposeTW},
\ref{k-IndPaths-Cliques} and \ref{k-IndPaths-2join} directly imply the following theorem.

\begin{theorem}\label{k-IndPaths}
There is an algorithm with the following specifications:
\begin{description}
\item[ Input:] A graph $G\in\mathcal C$ and a set of pairs $\mathcal W=\{(s_1,t_1),(s_2,t_2),\ldots,(s_k,t_k)\}$ of vertices of $G$ such that all $2k$ vertices are distinct and the only possible edges between these vertices are of the form $s_it_i$, for some $1\leqslant i\leqslant k$.
\item[ Output:]
YES if the problem $(G,\mathcal W)$ has a solution, and NO otherwise.
\item[ Running time:] $\mathcal O(n^{2k+6})$.
\end{description}
\end{theorem}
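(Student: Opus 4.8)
The plan is to combine the three theorems cited in the sentence preceding the statement, using the decomposition theorem as the skeleton. First I would recall that Theorem~\ref{decomposeTW} (equivalently, its corollary noted right after it) tells us that every $G\in\mathcal C$ either lies in $\mathcal B$, or has a clique cutset, or has a 2-join; and that the class $\mathcal D$ of graphs in $\mathcal C$ with no clique cutset is precisely what one obtains after peeling off clique cutsets. This immediately suggests the two-layer strategy that matches the running time $\mathcal O(n^{2k+6})$: an outer layer handling clique cutsets via Theorem~\ref{k-IndPaths-Cliques}, with $\mathcal C$ in the role of $\mathcal G$ and $\mathcal D$ in the role of $\mathcal G_{\textsc{basic}}$, and an inner layer solving $k$-\textsc{Induced Disjoint Paths} on $\mathcal D$ via Theorem~\ref{k-IndPaths-2join}.

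Concretely, the steps I would carry out are: (1) Verify that $\mathcal C$ is $\mathcal D$-decomposable using clique cutsets, i.e.\ that if $G\in\mathcal C$ then either $G\in\mathcal D$ or $G$ has a clique cutset. This is immediate from the definition of $\mathcal D$ (it is exactly the graphs of $\mathcal C$ with no clique cutset), so the defining implication of a $\mathcal G_{\textsc{basic}}$-decomposable class holds trivially. Also note $\mathcal C$ is hereditary (being defined by forbidden induced subgraphs), and $\mathcal D$ is hereditary as well — actually one should be slightly careful here, but it suffices for Theorem~\ref{k-IndPaths-Cliques} that $\mathcal C$ be closed under induced subgraphs and $\mathcal D\subseteq\mathcal C$, and the blocks of a clique-cutset decomposition of a graph in $\mathcal C$ are again in $\mathcal C$; the leaf blocks have no clique cutset and hence lie in $\mathcal D$. (2) Invoke Theorem~\ref{k-IndPaths-2join}, which gives an $\mathcal O(n^6)$-time algorithm for $k$-\textsc{Induced Disjoint Paths} on $\mathcal D$; thus the hypothesis of Theorem~\ref{k-IndPaths-Cliques} holds with $\mathcal G_{\textsc{basic}}=\mathcal D$ and constant $c=6$ (which does not depend on $k$). (3) Apply Theorem~\ref{k-IndPaths-Cliques} to conclude that $k$-\textsc{Induced Disjoint Paths} can be solved on $\mathcal C$ in time $\mathcal O(n^{2k+c})=\mathcal O(n^{2k+6})$.

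There is essentially no hard step here — the theorem is a bookkeeping corollary — but the one point that deserves a moment of care is checking that the way ``basic'' is instantiated is legitimate: Theorem~\ref{k-IndPaths-Cliques} is stated abstractly for any $\mathcal G_{\textsc{basic}}$-decomposable class, so one must confirm that $\mathcal D$ really plays the role of $\mathcal G_{\textsc{basic}}$, namely that the leaf blocks $G[K_i\cup B_i]$ produced by the clique cutset decomposition of a graph $G\in\mathcal C$ have no clique cutset (true by construction of the decomposition tree, Theorem~\ref{th:tarjan}) and belong to $\mathcal C$ (true since $\mathcal C$ is hereditary and each block is an induced subgraph of $G$), hence belong to $\mathcal D$. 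Once this is noted, the running time follows directly by substituting $c=6$ into the bound of Theorem~\ref{k-IndPaths-Cliques}, and the input/output specification of the claimed algorithm is exactly inherited from that theorem. I would therefore write the proof in two or three sentences: state the instantiation $\mathcal G=\mathcal C$, $\mathcal G_{\textsc{basic}}=\mathcal D$, $c=6$; cite Theorem~\ref{k-IndPaths-2join} for the basic-class algorithm and Theorem~\ref{decomposeTW} for decomposability; and conclude via Theorem~\ref{k-IndPaths-Cliques}.
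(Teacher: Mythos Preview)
Your proposal is correct and matches the paper's approach exactly: the paper simply states that Theorems~\ref{decomposeTW}, \ref{k-IndPaths-Cliques} and \ref{k-IndPaths-2join} directly imply the result, and your instantiation $\mathcal G=\mathcal C$, $\mathcal G_{\textsc{basic}}=\mathcal D$, $c=6$ is precisely how these pieces fit together. Your extra care about verifying that the leaf blocks of the clique cutset decomposition lie in $\mathcal D$ is appropriate (and easily justified, as you note), though the paper does not spell this out.
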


In what follows we consider several problems (for graphs in $\mathcal C$) that are related to the $k$-\textsc{Induced Disjoint Paths} problem. The first is in fact its generalization.

Let $k$ be a fixed integer (that is not part of the input), $G$ a graph and $\mathcal W=\{(s_1,t_1),(s_2,t_2),\ldots,(s_k,t_k)\}$ a multiset of pairs  of vertices of $G$ (so, pairs need not be disjoint, there may be edges between vertices from $W=\bigcup_{i=1}^k\{s_i,t_i\}$ that are not of the form $s_it_i$ and we allow $(s_i,t_i)=(s_j,t_j)$, for $i\neq j$). Again, we say that vertices of $W$ are {\it terminals} of $\mathcal W$.

We consider the following problem: decide whether there exist $k$ paths $P_i=s_i\ldots t_i$, $1\leqslant i\leqslant k$, that are vertex-disjoint except that they can have a common endnode, and such that there are no edges between vertices of these paths except those on the paths and the ones from $G[W]$. For this problem we use the same notation as for the $k$-\textsc{Induced Disjoint Paths} problem, that is, we denote it with $(G,\mathcal W)$.

\begin{theorem}\label{k-GeneralIndPaths}
There is an algorithm with the following specifications:
\begin{description}
\item[ Input:] A graph $G\in\mathcal C$ and a multiset $\mathcal W=\{(s_1,t_1),(s_2,t_2),\ldots,(s_k,t_k)\}$ of pairs  of vertices of $G$.
\item[ Output:]
YES if the problem $(G,\mathcal W)$ has a solution, and NO otherwise.
\item[ Running time:] $\mathcal O(n^{4k+6})$.
\end{description}
\end{theorem}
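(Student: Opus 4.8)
The plan is to reduce the problem to the ``clean'' $k$-\textsc{Induced Disjoint Paths} problem of Theorem~\ref{k-IndPaths} by guessing, for each path of a potential solution, its first and its last edge. There are $\mathcal O(n^2)$ choices for such a pair of edges per index $i$, hence $\mathcal O(n^{2k})$ guesses in total; for each of them we build in time $\mathcal O(n^2)$ an instance of the clean problem and solve it in time $\mathcal O(n^{2k+6})$ by Theorem~\ref{k-IndPaths}, for a total running time of $\mathcal O(n^{2k})\cdot\mathcal O(n^{2k+6})=\mathcal O(n^{4k+6})$.

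First I would record some normalizations of a hypothetical solution $\{P_1,\dots,P_k\}$. Exactly as for the clean problem (see the discussion opening Section~\ref{sec:paths}), we may assume all $P_i$ are chordless, since replacing each $P_i$ by a chordless path of $G[V(P_i)]$ from $s_i$ to $t_i$ only deletes vertices and so preserves every required disjointness and non-adjacency condition. Consequently: if $s_i=t_i$ then $P_i$ is the single vertex $s_i$; if $s_it_i\in E(G)$ then $P_i$ is the edge $s_it_i$; and otherwise $P_i$ has length at least~$2$. Moreover every vertex of $W$ is an endnode of each path containing it, and distinct paths meet only in common endnodes, so no terminal is an interior vertex of any $P_i$; hence for the indices $i$ of the last kind the interior of $P_i$ avoids $W$, and by chordlessness the $P_i$-neighbour $a_i$ of $s_i$ and the $P_i$-neighbour $b_i$ of $t_i$ lie in $V(G)\setminus W$ and satisfy $N_G(a_i)\cap W=\{s_i\}$, $N_G(b_i)\cap W=\{t_i\}$ if $a_i\neq b_i$, and $N_G(a_i)\cap W=\{s_i,t_i\}$ if $a_i=b_i$.

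Call $i$ \emph{active} if $s_i\neq t_i$ and $s_it_i\notin E(G)$, and \emph{degenerate} otherwise. For each active $i$ we guess $(a_i,b_i)$ with $a_i\in N_G(s_i)\setminus W$, $b_i\in N_G(t_i)\setminus W$ (possibly $a_i=b_i$); this is $\mathcal O(n^{2k})$ guesses. For a fixed guess, run a \emph{consistency check}: the guessed vertices have the neighbourhoods inside $W$ listed above, the sets $\{a_i,b_i\}$ are pairwise disjoint over active $i$, and no vertex $a_i,b_i$ with $a_i\neq b_i$ is adjacent in $G$ to a vertex $a_j,b_j$ of another active pair or to a guessed vertex $m_j:=a_j=b_j$; discard the guess on failure. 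Otherwise form
\[ G^\star \;=\; G\Big[\big(V(G)\setminus(W\cup N_G(W))\big)\cup\{a_i,b_i : i \text{ active},\ a_i\neq b_i\}\Big], \]
then for each active $i$ with $a_i=b_i=:m_i$ further delete $N_G(m_i)\cap V(G^\star)$; and take the clean instance $\mathcal W^\star=\{(a_i,b_i): i \text{ active},\ a_i\neq b_i\}$, which has at most $k$ pairs. As $G^\star$ is an induced subgraph of $G$ and $\mathcal C$ is closed under induced subgraphs (a theta or a wheel of $G^\star$ is one of $G$), we have $G^\star\in\mathcal C$, so Theorem~\ref{k-IndPaths} applies to $(G^\star,\mathcal W^\star)$. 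The crux is the equivalence: $(G,\mathcal W)$ has a solution if and only if $(G^\star,\mathcal W^\star)$ has a solution for at least one guess surviving the check. For the forward direction one reads the guess and the paths $P_i\setminus\{s_i,t_i\}$ off a normalized solution; for the backward direction one reattaches $s_i,t_i$ (and $m_i$, when $a_i=b_i$) to each solution path of $(G^\star,\mathcal W^\star)$, together with the singleton and edge paths of the degenerate indices.

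The main obstacle is the bookkeeping in the backward direction: one must check that the vertices deleted in forming $G^\star$ (namely $W$, the vertices of $N_G(W)$ other than the chosen $a_i,b_i$, and the sets $N_G(m_i)$) are exactly those barred from the interiors of the solution paths, so that the reattached family is again pairwise vertex-disjoint except at common endnodes and has no edge between two distinct paths other than edges of the paths themselves and edges of $G[W]$ --- and that this is so simultaneously in every degenerate configuration: coinciding terminals, shared endnodes, repeated pairs $(s_i,t_i)=(s_j,t_j)$, and paths of length $0$, $1$, $2$, or $3$. With this equivalence in hand, the complexity count of the first paragraph completes the proof.
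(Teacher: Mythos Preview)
Your approach is the same as the paper's: guess the neighbour of each terminal along its solution path (you index by pair, the paper indexes by terminal vertex, but it is the same $\mathcal O(n^{2k})$ enumeration), strip off the terminals and their forbidden neighbours, and feed the residual instance to Theorem~\ref{k-IndPaths}.

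There is however a real gap in your consistency check. You require that each $a_i,b_i$ with $a_i\neq b_i$ be non-adjacent to every $a_j,b_j,m_j$ of another pair, but you never require two midpoints $m_i$ and $m_j$ (with $a_i=b_i$ and $a_j=b_j$, $i\neq j$) to be non-adjacent to one another. This breaks the backward direction. Take $G$ to be the tree on $\{s_1,t_1,s_2,t_2,m_1,m_2\}$ with edges $s_1m_1$, $m_1t_1$, $s_2m_2$, $m_2t_2$, $m_1m_2$ (a tree, hence in $\mathcal C$), and $\mathcal W=\{(s_1,t_1),(s_2,t_2)\}$. Both indices are active; the guess $a_1=b_1=m_1$, $a_2=b_2=m_2$ passes all three of your checks (the third is vacuous since no index has $a_i\neq b_i$), and the residual instance $(G^\star,\mathcal W^\star)$ is the empty graph with the empty set of pairs, trivially solvable. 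Yet $(G,\mathcal W)$ has no solution, because the only candidate paths $P_1=s_1m_1t_1$ and $P_2=s_2m_2t_2$ are joined by the forbidden edge $m_1m_2$. Adding ``and the midpoints $m_i$ are pairwise non-adjacent'' to your consistency check repairs this, and with that amendment your argument goes through.
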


\begin{proof}
Let $W=\bigcup_{i=1}^k\{s_i,t_i\}$ be the set of terminals of $\mathcal W$ in $G$. First, let $\mathcal Q$ be the multiset
of all pairs of type $(v,v)$ from $\mathcal W$. Furthermore, let $Q=\bigcup_{(v,v)\in\mathcal Q}\{ v\}$ and
$N=\left(Q\cup\bigcup_{v\in Q}N(v)\right)\setminus W'$, where $W'$ is the set of terminals of the multiset $\mathcal W\setminus \mathcal Q$. Then the problem $(G,\mathcal W)$ is equivalent to $(G\setminus N,\mathcal W\setminus\mathcal Q)$. So, we may assume that $s_i\neq t_i$, for $1\leqslant i\leqslant k$.

Next, let $\mathcal F$ be the multiset of all pairs of type $(v,w)$ from $\mathcal W$ such that $vw$ is an edge of $G$. Furthermore, let $F=\bigcup_{(v,w)\in\mathcal F}\{v,w\}$ and $N=\left(\bigcup_{v\in F}N(v)\right)\setminus W'$, where $W'$ is the set of terminals of the multiset $\mathcal W\setminus \mathcal F$. Then the problem $(G,\mathcal W)$ is equivalent
to $(G\setminus N,\mathcal W\setminus\mathcal F)$. So, we may also assume that $s_it_i$ is not an edge of $G$, for $1\leqslant i\leqslant k$.

Let $W=\{w_1,w_2,\ldots,w_s\}$ (here $W$ is a set), $W_j=\{i\,|\,s_i=w_j\mbox{ or }t_i=w_j\}$ and $k_j=|W_j|$, for $1\leqslant j\leqslant s$. Now, let us assume that $(G,\mathcal W)$ has a solution $\mathcal P$. Then for each $j$, $w_j$ is an endnode of $k_j$ paths from $\mathcal P$ and the set of neighbors $N_j$ of $w_j$ on these paths is contained in $N(w_j)$ (note that $|N_j|=k_j$). Let $N=\bigcup_{j=1}^s N_j$. If $v\in N_j\cap N_i$, for some $j\neq i$, then $v$ is on path of $\mathcal P$ of length 2, $v$ has no other neighbors in $N$ and is not contained in $N_t$, for $t\not\in\{i,j\}$; if for $v',v''\in N$, $v'v''$ is an edge of $G$, then $v'$ and $v''$ are on the same path of a solutions of $(G,\mathcal W)$ (these two conditions are later called {\it conditions} $(*)$). Let $S$ be the set of vertices that are in more than one of the sets $N_i$. Then a solution of $(G,\mathcal W)$ gives a solution of  $(G',\mathcal W')$, where $G'$ is the graph obtained from $G$ by removing all vertices from $W\cup S$ and all neighbors of vertices from $S$, and $\mathcal W'$ is certain set of pairs of vertices of $N\setminus S$, that satisfies conditions of Theorem \ref{k-IndPaths}. So, now it is clear that to solve $(G,\mathcal W)$ it is enough to do the following. For every $1\leqslant j\leqslant k$ choose a $k_j$-element subset $N_j$ of $N(w_j)$, check if these subsets satisfy the conditions $(*)$, build the corresponding problem $(G',\mathcal W')$ and use Theorem \ref{k-IndPaths} to solve it. If for at least one choice of the subsets $N_j$ the corresponding problem $(G',\mathcal W')$ has a solution we return YES, and otherwise return NO.

We have $|N(w_j)|=\mathcal O(n)$ and $|N_j|=k_j$, so the subset $N_j$ can be chosen in $\mathcal O(n^{k_j})$ ways. Hence, all subsets $N_j$, for $1\leqslant j\leqslant s$, can be chosen in $\mathcal O(n^{2k})$ ways, since $\sum_{j=1}^sk_j=2k$. So, in our algorithm we solve $\mathcal O(n^{2k})$ problems of type $(G',\mathcal W')$ (using Theorem \ref{k-IndPaths}), and hence its running time is $\mathcal O(n^{4k+6})$.
\end{proof}

Now, we list some problems that can be reduced to the problem $(G,\mathcal W)$ from the previous theorem. These reductions are simple and given in more details in \cite{IDP-clawPoly} and \cite{IDP-clawFPT} (there the reductions are given for claw-free graphs, but they work equally well in general). Throughout $k$ is fixed.

\subsubsection{$k$-\textsc{in-a-Path}}

The problem $k$-\textsc{in-a-Path} is to decide whether there is a chordless path in $G$ that contains given vertices $v_1,v_2,\ldots,v_k$ of $G$. To solve this problem it is enough to solve $k!$ problems $(G,\mathcal W)$, where $\mathcal W=\{(v_{\sigma(1)},v_{\sigma(2)}),(v_{\sigma(2)},v_{\sigma(3)}),\ldots,(v_{\sigma(k-1)},v_{\sigma(k)})\}$ and $\sigma$ is a permutation of $\{1,2,\ldots,k\}$. So, by Theorem \ref{k-GeneralIndPaths} the problem $k$-\textsc{in-a-Path} can be solved in time $\mathcal O(n^{4k+6})$ for graphs in $\mathcal C$.

\subsubsection{$H$-\textsc{Anchored Induced Topological Minor}}

Let $H$ be a fixed graph with vertex set $\{x_1,x_2,\ldots,x_k\}$, $l$ edges and $s$ isolated vertices
(then $l\leqslant \binom{k}{2}$ and $s\leqslant k$). The problem $H$-\textsc{Anchored Induced Topological Minor} is to decide
whether for the given vertices $v_1,v_2,\ldots,v_k$ of $G$ there exists an induced subgraph of $G$ that is isomorphic to a subdivision of $H$ such that the isomorphism maps $v_i$ to $x_i$, for $1\leqslant i\leqslant k$.
We may assume that if $x_i$ is an isolated vertex of $H$, then $v_i$ is not adjacent to $v_j$ for $j\neq i$, since
otherwise the problem clearly does not have a solution.
This problem is equivalent to $(G,\mathcal W)$, where $\mathcal W=\{(v_i,v_j)\,|\,x_ix_j\ \mathrm{ is\ an\ edge\ of }\ H\}\cup\{(v_i,v_i)\,|\,x_i\ \mathrm{ is\ an\ isolated\ vertex\ of }\ H\}$, so by Theorem \ref{k-GeneralIndPaths} it can be solved in time $\mathcal O(n^{4(l+s)+6})$ for graphs in $\mathcal C$.

\subsubsection{$H$-\textsc{Induced Topological Minor}}

Let $H$ be a fixed graph with vertex set $\{x_1,x_2,\ldots,x_k\}$, $l$ edges and $s$ isolated vertices (then $l\leqslant \binom{k}{2}$ and $s\leqslant k$). The problem $H$-\textsc{Induced Topological Minor} is to decide whether $G$ contains (as an induced subgraph) a subdivision of $H$. To solve this problem it is enough to solve $H$-\textsc{Anchored Induced Topological Minor} problem for any $k$ distinct  vertices of $G$. So this problem can be solved in time $\mathcal O(n^{k+4(l+s)+6})$ for graphs in~$\mathcal C$.

\subsubsection{$k$-\textsc{in-a-Tree}}

The problem $k$-\textsc{in-a-Tree} is to decide whether there is an induced tree of $G$ that contains given vertices $v_1,v_2,\ldots,v_k$ of $G$. The problem is trivial for $k=1$, so we may assume that $k\geq 2$. Let us suppose that this problem has a solution, and let $T'$ be a minimal tree (w.r.t.\ inclusion) that contains vertices $v_1,v_2,\ldots,v_k$. Then all leaves of $T'$ are from $\{v_1,v_2,\ldots,v_k\}$, and hence $T'$ is an induced subdivision of a tree $T$ that satisfies: all leaves and vertices of degree 2 of $T$ are from $\{v_1,v_2,\ldots,v_k\}$ and all vertices from $\{v_1,v_2,\ldots,v_k\}$ are vertices of $T$. Note that $T$ has at most $2k-2$ vertices. Indeed, if $T$ has $e$ edges, $v$ vertices, $a$ leaves and $b$ vertices of degree 2, then $2v-2=2e=\sum_{u\in V(T')}\deg u\geq a+2b+3(v-a-b)$, and hence $v\leqslant 2a+b-2\leqslant 2k-2$.

So, to solve $k$-\textsc{in-a-Tree} for a graph $G\in\mathcal C$ (and its vertices $\{v_1,v_2,\ldots,v_k\}$) it is enough to do the following. First, we find the set $\mathcal S$ of all non-isomorphic trees $T$ with at least $k$, but at most $2k-2$ vertices, and such that the total number of leaves and degree 2 vertices of $T$ is at most $k$. Then $|\mathcal S|$ is bounded by a constant (depending on $k$), and hence the set $\mathcal S$ can be found in constant time. Now, for each $T\in \mathcal S$ we do the following. First, we choose $s-k$ vertices from $V(G)\setminus\{v_1,v_2,\ldots,v_k\}$, where $s$ is the number of vertices of $T$, and label them with $v_{k+1},v_{k+2},\ldots,v_s$ arbitrarily. This can be done in time $\mathcal O(n^{s-k})=\mathcal O(n^{k-2})$. Let $a$ be the number of leaves and $b$ the number of degree 2 vertices of $T$. Now, we assign labels $\{1,2\ldots,s\}$ to vertices of $T$ (we build several instances of such assignments) and solve the appropriate $T$-\textsc{Anchored Induced Topological Minor} problem. First, we choose  a set $A$ of $a$ distinct vertices from $\{v_1,v_2,\ldots,v_k\}$ and then label the leaves of $T$ with a permutation of the appropriate $a$ indices. Next, we choose a set $B$ of $b$ distinct vertices from $\{v_1,v_2,\ldots,v_k\}\setminus A$ and then label the degree 2 vertices of $T$ with a permutation of the appropriate $b$ indices. The remaining vertices of $V(T)$  (which are all of degree at least 3) are then label with a permutation of indices of elements from $\{v_1,v_2,\ldots,v_s\}\setminus(A\cup B)$. So, the total number of $T$-\textsc{Anchored Induced Topological Minor} problem that is obtained is bounded by a constant (depending on $k$), and each of them can be solved in time $\mathcal O(n^{4(s-1)+6})=\mathcal O(n^{8k-6})$.

Hence, the problem $k$-\textsc{in-a-Tree} can be solved in time $\mathcal O(n^{k-2}\cdot n^{8k-6})=\mathcal O(n^{9k-8})$ for graphs in~$\mathcal C$.

\section{When $k$ is part of the input}\label{sec:NPC}

In the previous section we proved that the problems $k$-\textsc{Induced Disjoint Paths}, $k$-\textsc{in-a-Path} and $k$-\textsc{in-a-Cycle} are polynomially solvable on $\mathcal C$ for any fixed $k$. Using the reductions similar to the ones
used in  \cite{IDP-clawPoly}, we prove that these problems are NP-complete in the class of line graphs of triangle-free chordless graphs when $k$ is part of the input, and hence in $\mathcal C$.

\begin{theorem}
The \textsc{Induced Disjoint Paths} problem is $\mathrm{NP}$-complete for the class of line graphs of triangle-free chordless graphs.
\end{theorem}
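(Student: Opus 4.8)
The plan is to reduce from a known $\mathrm{NP}$-complete variant of \textsc{Disjoint Paths} (or \textsc{Induced Disjoint Paths}) and to ensure the graph produced is a line graph of a triangle-free chordless graph. A convenient starting point is Bienstock's result that \textsc{Induced Disjoint Paths} (equivalently, $2$-\textsc{in-a-Cycle}) is $\mathrm{NP}$-complete on general graphs, or alternatively the classical hardness of edge-disjoint paths; but since we want the \emph{induced} variant and we want the host graph to be a line graph, the cleanest route is to use the standard translation between edge-disjoint paths in a graph $R$ and induced (more precisely, mutually non-adjacent) paths in $L(R)$. Concretely, a set of pairwise edge-disjoint paths in $R$ whose edge sets are also pairwise "non-touching" (no two use edges sharing an endpoint) corresponds exactly to a set of vertex-disjoint, mutually anticomplete paths in $L(R)$. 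So I would first pin down an $\mathrm{NP}$-complete problem about finding such mutually non-touching paths in a triangle-free chordless graph $R$, and then pass to $L(R)$.

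The main steps, in order, are: (1) State the source problem. I would take \textsc{Disjoint Paths} with $k$ part of the input, which is $\mathrm{NP}$-complete by Karp (cited as \cite{Karp-DisjointNP}), or better its variant already known to be $\mathrm{NP}$-hard on subcubic / bipartite / girth-constrained graphs so that triangle-freeness and chordlessness come for free after a subdivision step. (2) Preprocess the source instance $R'$ with terminal pairs $(x_i,y_i)$: subdivide every edge of $R'$ enough times (say twice) to make the graph triangle-free and every cycle chordless, and attach a long pendant path to each terminal so that the terminals become degree-$1$ vertices; subdivision does not change the existence of a system of internally-disjoint paths realizing the pairs, and adding pendant paths lets the corresponding line-graph vertices play the role of $s_i,t_i$. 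Call the resulting triangle-free chordless graph $R$. (3) Form $G=L(R)$, and let $s_i,t_i$ be the vertices of $G$ corresponding to the two pendant edges incident with $x_i$ and $y_i$ respectively; verify that the only edges among the $s_i,t_i$ are the required ones (which holds because the pendant paths are long and disjoint), so $(G,\mathcal W)$ is a legitimate instance. (4) Prove the equivalence: $(G,\mathcal W)$ has a solution iff $R$ has pairwise internally-vertex-disjoint paths $Q_i$ from $x_i$ to $y_i$ such that, moreover, $Q_i$ and $Q_j$ share no vertex and no two of their edges are adjacent — and this last strengthening is exactly what a further round of subdivision buys, since in a graph where every edge has been subdivided, vertex-disjoint paths automatically have non-adjacent edge sets. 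Then argue that such a system of paths in $R$ exists iff the original \textsc{Disjoint Paths} instance $R'$ is a yes-instance. (5) Finally note that $G=L(R)$ is computable in polynomial time and, being a line graph of a triangle-free chordless graph, lies in $\mathcal C$ by Theorem~\ref{decomposeTW}; this gives hardness for $\mathcal C$ as an immediate corollary.

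The step I expect to be the main obstacle is (4), getting the induced/anticomplete condition on the paths in $G$ to correspond precisely to a clean condition in $R$. A priori, two vertex-disjoint paths $P_i,P_j$ in $L(R)$ being anticomplete means their preimage edge sets $E(Q_i),E(Q_j)$ in $R$ are not only disjoint but have no two edges sharing an endpoint; this is slightly stronger than "$Q_i$ and $Q_j$ are internally vertex-disjoint." The remedy — and the reason the subdivision in step (2) must be done carefully rather than just "enough to kill triangles" — is that after subdividing every edge at least once, any two internally-vertex-disjoint $x_i$–$y_i$ paths can be rerouted so that they additionally pass only through subdivision vertices of distinct original edges, making their edge sets pairwise non-adjacent; one has to check this rerouting is always possible and preserves the endpoints. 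A secondary technical point is making sure that a solution path $P_i$ in $G$ cannot "cheat" by using a vertex of $G$ corresponding to an edge of a pendant path attached to some \emph{other} terminal $x_j$; this is ruled out because that vertex is anticomplete in $G$ to everything except the rest of that pendant path, so it can only appear on $P_j$, and then only if $P_j$ degenerates, which the length of the pendant paths prevents. Once these two points are handled, the reduction and the two directions of the equivalence are routine.

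Note: I would double-check whether the cleaner citation is to Bienstock~\cite{bienstock} (for the induced/$2$-in-a-cycle hardness, then boosted to many pairs) or to a girth-bounded version of \textsc{Disjoint Paths}; either works, and picking the one whose host graphs are closest to triangle-free chordless minimizes the preprocessing in step (2).
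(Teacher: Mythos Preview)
Your approach is essentially the paper's: reduce from \textsc{Disjoint Paths} (Karp), subdivide every edge once to obtain a triangle-free chordless graph, attach a pendant edge at each terminal, and pass to the line graph. The paper does exactly this, with a single subdivision and a single pendant edge per terminal.

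Where you diverge is in step~(4), and there you are manufacturing a difficulty that is not present. In Karp's \textsc{Disjoint Paths} the $2k$ terminals are distinct and the sought paths are \emph{fully} vertex-disjoint, not merely internally so. Fully vertex-disjoint paths in any graph $R$ already have pairwise non-adjacent edge sets (an edge of $Q_i$ and an edge of $Q_j$ sharing an endpoint would mean $Q_i$ and $Q_j$ share that vertex). Hence the correspondence ``vertex-disjoint paths in $R$ $\Leftrightarrow$ induced disjoint paths in $L(R)$'' is immediate in both directions once $R$ is triangle-free (so that induced paths in $L(R)$ come from genuine paths in $R$); no rerouting argument, no second round of subdivision, and no long pendant paths are needed. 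The subdivision step serves only to make the root graph triangle-free and chordless, not to repair the equivalence. With this simplification your proof collapses to the paper's two-paragraph argument.
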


\begin{proof}
We can check in polynomial time if a given collection of paths is a solution of an instance of the \textsc{Induced Disjoint Paths} problem. Hence, this problem is in $\mathrm{NP}$. To prove the $\mathrm{NP}$-completeness of this problem in the class of line graphs of triangle-free chordless graphs, we reduce from the \textsc{Disjoint Paths} problem, which is $\mathrm{NP}$-complete  \cite{Karp-DisjointNP}.

Let $G$ be a graph and $(s_1,t_1),(s_2,t_2),\ldots,(s_k,t_k)$ disjoint pairs of vertices from $G$ that are terminals of an instance $\mathcal S$ of the \textsc{Disjoint Paths} on $G$. Also, let $G'$ be the graph obtained from $G$ by subdividing each of its edges once. Then $G'$ is chordless and triangle-free and $\mathcal S$ is equivalent with the instance $\mathcal S'$ of the \textsc{Disjoint Paths} problem on $G'$ with the same terminals.
Let $G''$ be the graph obtained from $G'$ by adding new vertices $s_i'$ and $t_i'$, and edges $s_i's_i$
and $t_it_i'$, for $1\leqslant i\leqslant k$ (the graph $G''$ remains chordless and triangle-free). Then $\mathcal S'$ has a solution if and only if the problem $(L(G''),\mathcal W)$, where $\mathcal W=\{(s_i's_i,t_i't_i)\,|\,1\leqslant i\leqslant k\}$, has a solution. This completes our reduction, since $L(G'')$ is the line graph of a triangle-free chordless graph.
\end{proof}

A graph is {\it cubic} if the degree of each of its vertices is 3.
A path of a graph $G$ is {\it Hamiltonian} if it contains all vertices of $G$. The \textsc{Hamiltonian Path} problem is to decide whether the given graph has a Hamiltonian path.

\begin{theorem}
The $k$-\textsc{in-a-Path} problem and the $k$-\textsc{in-a-Cycle} problem are $\mathrm{NP}$-complete in the class of line graphs of triangle-free chordless graphs, when $k$ is part of the input.
\end{theorem}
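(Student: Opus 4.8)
Both problems are clearly in $\mathrm{NP}$: a chordless path (resp.\ cycle) through the prescribed vertices is a certificate of size $O(n)$ that is checkable in polynomial time. For $\mathrm{NP}$-hardness the plan is to reduce, following the constructions of \cite{IDP-clawPoly}, from \textsc{Hamiltonian Path} (for $k$-\textsc{in-a-Path}) and from \textsc{Hamiltonian Cycle} (for $k$-\textsc{in-a-Cycle}); both are $\mathrm{NP}$-complete already for cubic graphs. So I fix a cubic graph $G_0$ with $n$ vertices and build a triangle-free chordless graph $R$ together with a set $F$ of $k=\Theta(n)$ vertices of $L(R)$ such that $L(R)$ has a chordless path (resp.\ hole) through all of $F$ if and only if $G_0$ has a Hamiltonian path (resp.\ cycle); since $L(R)$ is a line graph of a triangle-free chordless graph (and, as in the previous theorem, such graphs lie in $\mathcal C$), this proves the statement.

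The construction has three ingredients. First, subdivide every edge of $G_0$ once; the resulting graph $G_0'$ is bipartite, hence triangle-free, and it is chordless because every subdivision vertex has degree $2$, so no cycle of $G_0'$ can have a chord (the same argument as in the proof of the previous theorem). Every path and every cycle of $G_0'$ alternates between original and subdivision vertices, so a path (resp.\ cycle) of $G_0'$ visiting all $n$ original vertices of $G_0$ is precisely a Hamiltonian path (resp.\ cycle) of $G_0$, and conversely. Second, pass to line graphs: a one-line check gives that the chordless paths of $L(H)$ are exactly the paths of $H$ of length at least $1$, and the holes of $L(H)$ are exactly the cycles of $H$ (when $H$ is triangle-free the only other chordless cycles of $L(H)$ are triangles, which cannot contain a prescribed vertex set of size more than $3$); under this correspondence a chordless path/hole of $L(H)$ meets the clique of $L(H)$ formed by the edges of $H$ at a vertex $v$ exactly when the corresponding path/cycle of $H$ visits $v$. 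Third — and this is the part needing care — I attach to $G_0'$, at each original vertex $v$, a small gadget $\Gamma_v$ containing one distinguished edge (or short flat path) $\varepsilon_v$, join the gadgets along the edges of $G_0$ by short flat paths through designated ``ports'', and take $F$ to be the set of vertices of $L(R)$ coming from the $\varepsilon_v$'s. The gadget must be designed so that (i) the whole graph $R$ stays triangle-free and chordless, and (ii) a chordless path/hole of $L(R)$ passes through the vertex of $F$ associated with $v$ exactly when the corresponding path/cycle of $R$ ``transits'' $\Gamma_v$, i.e.\ uses two of the three edges of $G_0$ at $v$ (just one, for the two vertices that serve as path endpoints in the $k$-\textsc{in-a-Path} case), without favouring any particular pair of these three edges. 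Granting this, the equivalence unwinds: a chordless path/hole of $L(R)$ through $F$ corresponds to a path/cycle of $R$ using every $\varepsilon_v$, hence transiting every gadget; reading off the order in which the gadgets are transited yields, since the path/cycle of $R$ is vertex-simple, a Hamiltonian path/cycle of $G_0$, and conversely any Hamiltonian path/cycle of $G_0$ is routed through the gadgets and connectors in the obvious way. As $|V(R)|,|E(R)|=O(n)$, building $R$, $L(R)$ and $F$ is polynomial, so the reduction is polynomial and $k=|F|$ is part of the input.

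The real obstacle I expect is the gadget design in the third ingredient. For a cubic vertex $v$ the property ``$v$ is visited'' is a disjunction over the three edges at $v$, whereas $k$-\textsc{in-a-Path}/$k$-\textsc{in-a-Cycle} can only impose a conjunction of single-vertex constraints; one must therefore realise this disjunction inside $\Gamma_v$ so that forcing the single $L(R)$-vertex $\varepsilon_v$ neither over-constrains (fixing which two edges at $v$ are used) nor under-constrains (leaving some transit of $\Gamma_v$ able to bypass $\varepsilon_v$), and all of this while keeping $R$ chordless — a naively placed ``forced diagonal'' creates a chord along the long route around it. Getting a gadget that simultaneously meets all of these demands, using the port-and-flat-path machinery of \cite{IDP-clawPoly}, is the bulk of the proof. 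I note that the $k$-\textsc{in-a-Cycle} case is slightly smoother than $k$-\textsc{in-a-Path}, since on a Hamiltonian cycle every vertex is internal, so no endpoint exception has to be built into $\Gamma_v$.
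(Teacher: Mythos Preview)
Your reduction framework is right --- reduce from \textsc{Hamiltonian Path}/\textsc{Cycle} on cubic graphs, plant one gadget per vertex, pass to the line graph --- but there is a genuine gap: you never actually construct the gadget $\Gamma_v$, and you explicitly concede that designing it is ``the bulk of the proof''. What makes the gadget design hard in your setup is the order of operations: you subdivide $G_0$ first to obtain a triangle-free chordless graph, and only afterwards try to attach gadgets while preserving triangle-freeness and chordlessness, which is exactly why you end up worrying that ``a naively placed forced diagonal creates a chord along the long route around it''.

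The paper avoids this difficulty by reversing the order. Starting from the cubic graph $G$, it first performs the standard vertex-to-triangle blow-up from \cite{IDP-clawPoly}: each vertex $u$ becomes a triangle $u_1u_2u_3$, and the three edges of $G$ at $u$ are attached one each to $u_1,u_2,u_3$, giving a cubic graph $G'$. It is proved in \cite{IDP-clawPoly} that $G$ has a Hamiltonian path if and only if $G'$ has a path containing every edge $u_1u_2$. Only \emph{after} this does one subdivide every edge of $G'$ once to obtain $G''$; since all subdivision vertices have degree~$2$, $G''$ is chordless, and being bipartite it is triangle-free. The distinguished edges $u_1u_2$ of $G'$ become pairs of edges $u_1v_{u_1u_2},\,v_{u_1u_2}u_2$ in $G''$, and from there your second ingredient (the bijection between paths of $G''$ and induced paths of $L(G'')$) finishes the reduction. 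The point you are missing is that the gadget is allowed to contain a triangle --- indeed, it \emph{is} a triangle --- because the global subdivision step cleans that up afterwards; there is no need to engineer a gadget that is already triangle-free and chordless.
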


\begin{proof}
We can check in polynomial time if a given path (resp.\ cycle) is a solution of an instance of the $k$-\textsc{in-a-Path} (resp.\ $k$-\textsc{in-a-Cycle}) problem, and hence both problems are in $\mathrm{NP}$. To prove the $\mathrm{NP}$-completeness of these problem in the class of line graphs of triangle-free chordless graphs, we reduce from the \textsc{Hamiltonian Path} problem, which is $\mathrm{NP}$-complete for cubic graphs (see \cite[problem GT39]{GJ-CubicHamilton}).

Let $G$ be a cubic graph and $\mathcal S$ the instance of the \textsc{Hamiltonian Path} problem for $G$.
We build the graph $G'$ from $G$ as follows. For each $u\in V(G)$ we build a triangle $u_1u_2u_3$ of $G'$, and $\{u_i\,|\, 1\leqslant i\leqslant 3,\,u\in V(G)\}$ is the set of vertices of $G'$. Next, for each edge $uv$ of $G$ we build exactly one edge $u_iv_j$, for some $i,j\in\{1,2,3\}$, such that each vertex of $G'$ is of degree 3 (so, for $u\in V(G)$, $i\in\{1,2,3\}$, $u_i$ is adjacent to $u_j$, for $j\in\{1,2,3\}\setminus\{i\}$, and to a vertex $v_s$ for some $v\in N(u)$ and $s\in\{1,2,3\}$).
It is proved in  \cite{IDP-clawPoly} that $\mathcal S$ has a solution if and only if $G'$ has a path that passes through all edges from $\{u_1u_2\,|\,u\in V(G)\}$. We call the latter problem $\mathcal S'$. Now, let $G''$ be the graph obtained from $G'$ by subdividing each edge of $G'$, that is, each edge $xy$ of $G'$ is replaced with the path $xv_{xy}y$ (where $v_{xy}$ is of degree 2 in $G''$). Note that $G''$ is triangle-free and chordless. Then it is clear that $\mathcal S'$ has a solution if and only if $G''$ has a path that passes through all edges from $\{u_1v_{u_1u_2},u_2v_{u_1u_2}\,|\,u\in V(G)\}$. We call the later problem $\mathcal S''$. Since the paths of $G''$ are in one-to-one correspondence with induced paths of $L(G'')$, the problem $\mathcal S''$ has a solution if and only if there is an induced path of $L(G'')$ that passes through all vertices from $\{u_1v_{u_1u_2},u_2v_{u_1u_2}\,|\,u\in V(G)\}\subseteq V(L(G''))$.  This completes our reduction, since $L(G'')$ is the line graph of a triangle-free chordless graph.

For the $k$-\textsc{in-a-Cycle} problem analogous reduction is made from the \textsc{Hamiltonian Cycle} problem, which is again $\mathrm{NP}$-complete for cubic graphs (see \cite{GJ-CubicHamilton}).
\end{proof}

\end{document}